\def\today{\ifcase\month\or
  January\or February\or March\or April\or May\or June\or
  July\or August\or September\or October\or November\or December\fi
  \space\number\day, \number\year}
\DeclareMathOperator{\sgn}{\mathrm{sgn}}
 \newtheorem{theorem}{Theorem}
 \newtheorem{lemma}[theorem]{Lemma}
 \newtheorem{corollary}[theorem]{Corollary}
 \theoremstyle{definition}
 \theoremstyle{remark}
 \newcommand{\C}{\mathbb{C}}
 \newcommand{\R}{\mathbb{R}}
 \newcommand{\Z}{\mathbb{Z}}
 \newcommand{\hh}{\tfrac12}
 \newcommand{\dt}{\text{\rm d}t}
  \renewcommand{\d}{\text{\rm d}}
 \newcommand{\du}{\text{\rm d}u}
 \newcommand{\dw}{\text{\rm d}w}
 \newcommand{\dx}{\text{\rm d}x}
 \newcommand{\dy}{\text{\rm d}y}
\newcommand{\im}{{\rm Im}\,}
\newcommand{\re}{{\rm Re}\,}
\begin{document}
\title[Estimates for the Riemann zeta-function]{Bandlimited approximations and estimates \\ for the Riemann zeta-function}
\author[Carneiro, Chirre and Milinovich]{Emanuel Carneiro, Andr\'{e}s Chirre and Micah B. Milinovich}
\subjclass[2010]{11M06, 11M26, 41A30}
\keywords{Riemann zeta-function, Riemann hypothesis, argument, critical strip, Beurling-Selberg extremal problem, extremal functions, Gaussian subordination, exponential type.}
\address{IMPA - Instituto Nacional de Matem\'{a}tica Pura e Aplicada - Estrada Dona Castorina, 110, Rio de Janeiro, RJ, Brazil 22460-320}
\email{carneiro@impa.br}
\email{achirre@impa.br}
\address{Department of Mathematics, University of Mississippi, University, MS 38677 USA}
\email{mbmilino@olemiss.edu}
\allowdisplaybreaks
\numberwithin{equation}{section}

\maketitle  

\begin{abstract}
In this paper, we provide explicit upper and lower bounds for the argument of the Riemann zeta-function and its antiderivatives in the critical strip under the assumption of the Riemann hypothesis. This extends the previously known bounds for these quantities on the critical line (and sharpens the error terms in such estimates). Our tools come not only from number theory, but also from Fourier analysis and approximation theory. An important element in our strategy is the ability to solve a Fourier optimization problem with constraints, namely, the problem of majorizing certain real-valued even functions by bandlimited functions, optimizing the $L^1(\R)-$error. Deriving explicit formulae for the Fourier transforms of such optimal approximations plays a crucial role in our approach.
\end{abstract}


\section{Introduction}

In this paper, we make use of fine tools from harmonic analysis and approximation theory to prove a number of new estimates in the theory of the Riemann zeta-function.

\subsection{Definitions} Let $\zeta(s)$ denote the Riemann zeta-function. For $\hh\leq\alpha \leq1$ we define
$$S_{0,\alpha}(t) = \tfrac{1}{\pi} \arg \zeta \big(\alpha + it \big),$$
where the argument is obtained by a continuous variation along straight line segments joining the points $2$, $2+i t$ and $\alpha + it$, assuming that this path has no zeros of $\zeta(s)$, with the convention that $\arg \zeta(2) = 0$. If this path has zeros of $\zeta(s)$ (including the endpoint $\alpha + it$) we set 
$$S_{0,\alpha}(t)  = \tfrac{1}{2}\, \lim_{\varepsilon \to 0^+} \left\{ S_{0,\alpha}(t + \varepsilon) + S_{0,\alpha}(t - \varepsilon)\right\}. $$
We now define a sequence of antiderivatives $S_{n,\alpha}(t)$ that encode useful information on the oscillatory character of $S_{0,\alpha}(t)$. For $n\geq 1$ and $t >0$ we define, inductively, the functions
\begin{equation}\label{Intro_eq1_int_S_n}
S_{n,\alpha}(t) = \int_0^t S_{n-1,\alpha}(\tau) \,\d\tau\, + \delta_{n,\alpha\,},
\end{equation}
where $\delta_{n,\alpha}$ is a specific constant depending on $\alpha$ and $n$. For $k\in \mathbb{N}$, these constants are given by 
$$\delta_{2k-1,\alpha} =\frac{ (-1)^{k-1}}{\pi} \int_{\alpha}^{\infty} \int_{\sigma_{2k-2}}^{\infty} \ldots \int_{\sigma_{2}}^{\infty} \int_{\sigma_{1}}^{\infty} \log |\zeta(\sigma_0)|\, \d\sigma_0\,\d\sigma_1\,\ldots \,\d \sigma_{2k-2} $$
for $n = 2k-1$ and by
$$\delta_{2k,\alpha} = (-1)^{k-1} \int_{\alpha}^{1} \int_{\sigma_{2k-1}}^{1} \ldots \int_{\sigma_{2}}^{1} \int_{\sigma_{1}}^{1} \d\sigma_0\,\d\sigma_1\,\ldots \,\d \sigma_{2k-1} = \frac{(-1)^{k-1}(1-\alpha)^{2k}}{(2k)!}  $$ 
for $n = 2k$. In Theorem \ref{Thm2}, we give precise upper and lower bounds for all the iterates $S_{n,\alpha}(t)$ for $\alpha$ in the critical strip.

\subsection{Behavior on the critical line} In the case $\alpha = \hh$, let us simply write $S_n(t) = S_{n,\frac12}(t)$ to return to classical notation (e.g. Littlewood \cite{L} and Selberg \cite{S}). The argument function $S(t) = S_0(t)$ is intrinsically connected to the distribution of the non-trivial zeros of $\zeta(s)$ via the relation 
$$N(t) = \frac{t}{2\pi} \log \frac{t}{2\pi} - \frac{t}{2\pi} + \frac{7}{8}  + S(t) + O\left( \frac{1}{t}\right),$$
where $N(t)$ counts (with multiplicity) the number of zeros $\rho = \beta + i \gamma$ of $\zeta(s)$ such that $0 < \gamma \leq t$, where zeros with ordinate $\gamma = t$ are counted with weight $\hh$. 

\smallskip

The Riemann hypothesis (RH) states that the non-trivial zeros of $\zeta(s)$ can be written as $\rho = \hh +i\gamma$ with $\gamma\in \mathbb{R}$. In his classical work \cite[Theorem 11]{L} of 1924, J. E. Littlewood established, assuming RH, the bound\footnote{Throughout the paper we use Vinogradov's notation $f = O(g)$ (or $f \ll g$) to mean that $|f(t)| \leq C \,|g(t)|$ for a certain constant $C>0$. In the subscript we indicate the parameters in which such constant $C$ may depend on.}
\begin{equation}\label{Littlewood_bound}
S_n(t) = O_n \left( \frac{\log t}{(\log \log t)^{n+1}}\right)
\end{equation}
for $n \geq 0$. 
The order of magnitude of \eqref{Littlewood_bound} has never been improved, and the efforts have been concentrated in optimizing the value of the implicit constant. The state of the art in this problem is the following result of \cite{CCM, CChi}.

\begin{theorem}[cf. \cite{CCM, CChi}] \label{Thm1}Assume the Riemann hypothesis. For $n \geq 0$ and $t$ sufficiently large we have
\begin{equation}\label{Intro_Thm_1_eq}
-\left( C^-_n + o(1)\right) \frac{\log t}{(\log \log t)^{n+1}} \ \leq \ S_n(t) \ \leq \ \left( C^+_n + o(1)\right) \frac{\log t}{(\log \log t)^{n+1}}\,,
\end{equation}
where $C_n^{\pm}$ are positive constants given by
\begin{itemize}
\item {\it For $n=0$},
$$C_0^{\pm} = \frac{1}{4}.$$

\item For $n = 4k +1$, with $k \in \Z_{\ge0}$,
$$C_{n}^- = \frac{\zeta(n+1)}{\pi \cdot 2^{n+1}} \ \ \ \ {\rm and} \ \ \ \ C_{n}^+ = \frac{\left( 1 - 2^{-n}\right)\zeta(n+1)}{\pi \cdot 2^{n+1}}.$$

\item For $n = 4k +3$, with $k \in \Z_{\ge0}$,
$$C_{n}^- = \frac{\left( 1 - 2^{-n}\right)\zeta(n+1)}{\pi \cdot 2^{n+1}} \ \ \ \ {\rm and} \ \ \ \ C_{n}^+ = \frac{\zeta(n+1)}{\pi \cdot 2^{n+1}}.$$

\item For $n \geq 2$ even,
\begin{align*}
\begin{split}
C_n^+ = C_n^- & =  \left(\frac{2 \big(C_{n+1}^{+} + C_{n+1}^{-}\big) \ C_{n-1}^{+}\ C_{n-1}^{-}}{C_{n-1}^{+} + C_{n-1}^{-}}\right)^{1/2} \\
& = \frac{\sqrt{2}}{\pi \cdot 2^{n+1}}\left(\frac{\left(1 - 2^{-n-2}\right)\,\left( 1 - 2^{-n+1}\right)\,\zeta(n) \ \zeta(n+2)}{\left(1 - 2^{-n}\right)}\right)^{1/2}.
\end{split}
\end{align*}
\end{itemize}
The terms $o(1)$ in \eqref{Intro_Thm_1_eq} are $O_n(\log \log \log t / \log \log t)$.
\end{theorem}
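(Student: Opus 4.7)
The plan is to combine the Guinand--Weil explicit formula with the Beurling--Selberg extremal theory of bandlimited majorants and minorants.

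First, I would derive a ``pointwise'' representation of $\pi S_n(t)$ as a sum over the non-trivial zeros $\rho = \hh + i\gamma$ of $\zeta$, plus explicit archimedean terms coming from the functional equation. Iteratively integrating the classical identity $\pi S(t) = -\text{Im}\sum_\gamma \log((\hh + it - \rho)/(2-\rho))$, one writes
\est{S_n(t) = \sum_\gamma f_n(t-\gamma) \,+\, R_n(t),}
where $f_n$ is an explicit function (for $n=0$, essentially $\arctan(1/u)/\pi$; for higher $n$, iterated antiderivatives thereof, which are piecewise polynomial plus $\arctan$-type pieces with parity governed by $n \bmod 4$) and $R_n(t)$ is a smooth term controllable by Stirling.

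Second, I would replace $f_n$ by a bandlimited majorant $M^+$ (resp.\ minorant $M^-$) of exponential type $2\pi\delta$, and apply the Guinand--Weil formula to $M^\pm$. This converts $\sum_\gamma M^\pm(\gamma-t)$ into a main term of size $\tfrac{\log t}{2\pi}\int_\R M^\pm(u)\,\du$ plus a sum over primes controlled by $\sup_{|u|\le\delta}|\widehat{M^\pm}(u)|$, together with elementary terms of size $O(\log\delta)$. Sandwiching $\sum_\gamma f_n(t-\gamma)$ between $\sum_\gamma M^\pm(\gamma-t)$, the one-sided bounds for $S_n(t)$ become $\tfrac{\log t}{2\pi}\int_\R f_n \pm \tfrac{\log t}{2\pi}\|M^\pm - f_n\|_{L^1(\R)} + \text{lower order}$, so that the constants $C_n^{\pm}$ are identified, up to scaling by $\delta$, with the optimal $L^1(\R)$-errors in the extremal one-sided approximation problem for $f_n$.

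Third, I would solve that extremal problem via Gaussian subordination (Carneiro--Vaaler--Littmann and successors): one writes $f_n(x) = \int_0^\infty e^{-\pi\lambda x^2}\,\dnu(\lambda)$ against an explicit measure, applies the known extremals for the Gaussian, and integrates. For odd $n$, this yields the closed forms $\zeta(n+1)/(\pi \cdot 2^{n+1})$ and $(1-2^{-n})\zeta(n+1)/(\pi\cdot 2^{n+1})$, with the asymmetry between $C_n^+$ and $C_n^-$ decided by the sign of $f_n$ near the origin (hence the residue of $n$ modulo $4$). For even $n \geq 2$, direct extremal constructions do not beat the upper bound produced by Cauchy--Schwarz from the adjacent odd cases $n-1$ and $n+1$ applied to the identity $S_n(t+h) - S_n(t) = \int_t^{t+h} S_{n-1}$, and optimizing in $h$ yields the geometric-mean expression and forces $C_n^+ = C_n^-$.

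Finally, one sets $\delta \approx \log\log t - 5\log\log\log t$ to balance the $L^1$-error term against the prime sum, which produces the leading $\log t / (\log\log t)^{n+1}$ together with the $O_n(\log\log\log t/\log\log t)$ secondary error. The main obstacle is step three: producing the extremal $M^\pm$ with explicit Fourier transforms and computing the closed-form $L^1$-errors listed in the theorem. The Gaussian subordination framework is what makes this tractable, and it is where the bulk of the technical work in \cite{CCM, CChi} resides; once those extremal values are in hand, the number-theoretic side is essentially a careful bookkeeping in the explicit formula.
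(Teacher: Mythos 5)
Your outline is essentially the paper's own strategy (and that of the cited works \cite{CCM, CChi}): a representation of $S_n(t)$ as a sum of $f_n(t-\gamma)$ over zeros, Guinand--Weil applied to the Gaussian-subordination extremal majorants/minorants of exponential type with $\pi\Delta \asymp \log\log t$, and a mean-value-theorem interpolation between the odd cases $n-1$ and $n+1$ to produce the geometric-mean constants for even $n\geq 2$. The only cosmetic difference is that the present paper recovers Theorem \ref{Thm1} as the limit $\alpha\to\tfrac12^+$ of the critical-strip result (Theorem \ref{Thm2}), but the underlying machinery is the one you describe.
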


One consequence of Theorem \ref{Thm2}, below, is a slight sharpening of this result. The cases $n=0$ and $n=1$ in Theorem \ref{Thm1} were proved by Carneiro, Chandee and Milinovich \cite{CCM} (see \cite{CCM2} for an alternative proof for $n=0$), while the cases $n\geq 2$ were proved by Carneiro and Chirre \cite{CChi}. In the case $n=0$, this improved upon earlier works of Goldston and Gonek \cite{GG}, Fujii \cite{F2} and Ramachandra and Sankaranarayanan \cite{RS}, who had obtained \eqref{Intro_Thm_1_eq} with constants $C^{\pm}=1/2$, $C^{\pm}=0.67$ and $C^{\pm}=1.12$, respectively, replacing the constant $C_0^{\pm} = 1/4$. In the case $n=1$, this improved upon earlier works of Fujii \cite{F3}, and Karatsuba and Korol\"{e}v \cite{KK}, who had obtained \eqref{Intro_Thm_1_eq} with the pairs of constants $(C^+, C^-) = (0.32,0.51)$ and $(C^+, C^-) = (40,40)$, respectively, replacing the pair $(C_1^+, C_1^-) = (\pi/48, \pi/24)$. In the cases $n\geq 2$, this significantly improved upon the work of Wakasa \cite{W}, who had established \eqref{Intro_Thm_1_eq} with pairs of constants that depended on $n$ but tended to the stationary value of $0.3203696...$ as $n \to \infty$, whereas the constants $C_n^{\pm}$ above go to zero exponentially fast.

\smallskip

Unconditionally, it is known that $S(t) = O(\log t)$, $S_1(t) = O(\log t)$ and $S_n(t) = O_n\big(t^{n-1}/\log t\big)$ for $n\geq 2$ (see, for instance, \cite{F1} for the latter). In fact, the Riemann hypothesis is equivalent to the statement that $S_n(t) = o(t^{n-2})$ as $t \to \infty$, for any $n \geq 3$ (see \cite[Theorem 4]{F1}).

\subsection{Behavior in the critical strip} The main purpose of this paper is to extend the bounds of Theorem \ref{Thm1} to the critical strip in an explicit way. Assuming RH, for $\hh < \alpha <1$, another function that will play an important role in our study is the derivative
$$S_{-1,\alpha}(t):= S_{0,\alpha}'(t) = \frac{1}{\pi}\, \re \frac{\zeta'}{\zeta}(\alpha + it).$$
For an integer $n \geq 0$ we introduce the function 
$$H_n(x) :=  \sum_{k=0}^{\infty} \frac{x^k}{(k+1)^n}.$$
The function $xH_n(x) = {\rm Li}_n(x)$ is known as {\it polylogarithm of order $n$} in the classical terminology of special functions. Note that $H_0(x) = 1/(1-x)$ for $|x| <1$. Our main result is stated below, in which we regard $\alpha$ and $t$ as free parameters.

\begin{theorem}\label{Thm2}
Assume the Riemann hypothesis and let $n \geq -1$. Let $\hh < \alpha < 1$ and $c>0$ be a given real number. Let $t > 0$ be such that $\log \log t \geq 4$. In the range
\begin{equation}\label{Range_Ess}
(1-\alpha)^2\log \log t \geq c
\end{equation}
we have the uniform bounds:
\smallskip

\noindent{\rm (i)} For $n=-1$,
\begin{align}\label{Fev02_3:09pm}
\begin{split}
 -C^-_{-1, \alpha}(t) \, (\log t)^{2-2\alpha} & + O_c\left(\dfrac{(\alpha - \frac12)(\log t)^{2 - 2 \alpha}}{(1 - \alpha)^2 \log \log t}\right) \leq   S_{-1,\alpha}(t)  = \frac{1}{\pi}\, \re \frac{\zeta'}{\zeta}(\alpha + it)  \\
&  \ \ \   \leq 
C^+_{-1,\alpha}(t) \, (\log t)^{2-2\alpha} + O_c\left(\dfrac{(\log t)^{2 - 2 \alpha}}{(\alpha - \frac12)\,(1 - \alpha)^2\, (\log \log t)}\right).
\end{split}
\end{align}
\noindent{\rm (ii)} For $n\geq 0$,
\begin{align}\label{Feb23_4:32pm}
\begin{split}
 -C^-_{n,\alpha}(t)  \frac{(\log t)^{2-2\alpha}}{(\log \log t)^{n+1}} & + O_{n,c}\left(\frac{ (\log t)^{2 - 2\alpha}}{(1-\alpha)^2\,(\log \log t)^{n+2}}\right)  \leq   S_{n,\alpha}(t)  \\
&  \ \ \ \ \   \leq 
C^+_{n,\alpha}(t)  \frac{(\log t)^{2-2\alpha}}{(\log \log t)^{n+1}} + O_{n,c}\left(\frac{ (\log t)^{2 - 2\alpha}}{(1-\alpha)^2\,(\log \log t)^{n+2}}\right).
\end{split}
\end{align}
Above, $C_n^{\pm}(\alpha,t)$ are positive functions given by:
\begin{itemize}
\item For $n \geq -1$ odd, 
\begin{equation}\label{Jan31_5:17pm}
C_{n,\alpha}^{\pm}(t) = \frac{1}{2^{n+1}\pi}\left( H_{n+1}\Big(\pm (-1)^{(n+1)/2}\,(\log t)^{1 - 2\alpha}\Big) + \frac{2\alpha -1}{\alpha (1- \alpha)}\right).
\end{equation}
\item For $n =0$, 
\begin{equation}\label{Jan31_5:15pm}
C_{0,\alpha}^{\pm}(t) =  \Big(2 \big(C_{1,\alpha}^{+}(t) + C_{1,\alpha}^{-}(t)\big) \, C_{-1,\alpha}^{-}(t)\Big)^{1/2}.
\end{equation}
\item For $n \geq 2$ even,
\begin{equation}\label{Jan31_5:16pm}
C_{n,\alpha}^{\pm}(t) = \left(\frac{2 \big(C_{n+1,\alpha}^{+}(t) + C_{n+1,\alpha}^{-}(t)\big) \, C_{n-1,\alpha}^{+}(t)\, C_{n-1,\alpha}^{-}(t)}{C_{n-1,\alpha}^{+}(t) + C_{n-1,\alpha}^{-}(t)}\right)^{1/2}.
\end{equation}
\end{itemize}
\end{theorem}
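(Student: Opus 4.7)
The plan is to mirror the scheme behind Theorem \ref{Thm1} but to carry the shift $\alpha - \tfrac12$ through the whole argument, tracking its dependence explicitly. Under RH, the Hadamard product for $\zeta$ gives, modulo smooth and explicit pieces,
\[
\pi\, S_{-1,\alpha}(t) \;=\; \re \frac{\zeta'}{\zeta}(\alpha + it) \;=\; \sum_{\gamma} \frac{\alpha-\tfrac12}{(\alpha-\tfrac12)^2 + (t-\gamma)^2} \;+\; (\text{analytic part}),
\]
so $\pi S_{-1,\alpha}(t)$ is essentially a sum of the Poisson kernel $P_{\alpha-\frac12}(t-\gamma)$. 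Integrating $n+1$ times in $t$ with the constants of integration $\delta_{n,\alpha}$ from \eqref{Intro_eq1_int_S_n} gives a representation $S_{n,\alpha}(t) = \sum_\gamma K_{n,\alpha}(t-\gamma) + (\text{analytic part})$, where $K_{n,\alpha}$ is (up to sign) a natural $(n{+}1)$-fold antiderivative of $P_{\alpha-\frac12}$ that decays suitably at infinity.

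I would then sandwich each $K_{n,\alpha}$ between real-valued, even, bandlimited majorants and minorants $M^{\pm}$ whose Fourier transforms are supported in $[-\Delta,\Delta]$, chosen to solve the corresponding one-sided $L^{1}(\R)$-extremal problem. Applying the Guinand-Weil explicit formula to $\sum_\gamma M^{\pm}(t-\gamma)$ converts it into a main term determined by $\widehat{M^{\pm}}$ integrated against the log-derivative of $\Gamma$, plus a sum over prime powers of total size $\ll e^{\Delta}$. Balancing these two contributions, as in \cite{CCM, CChi}, leads to the choice $2\pi\Delta \asymp \log t/\log\log t$ and produces the order of magnitude $(\log t)^{2-2\alpha}/(\log\log t)^{n+1}$ in \eqref{Fev02_3:09pm}--\eqref{Feb23_4:32pm}.

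The precise constants $C_{n,\alpha}^{\pm}(t)$ should come out of the Fourier optimization. Because $K_{n,\alpha}$ is built by repeatedly integrating a Poisson kernel, $\widehat{K_{n,\alpha}}$ is a rational multiple of $\xi^{-(n+1)}e^{-2\pi(\alpha-\frac12)|\xi|}$ (with a parity-dependent sign), and the Beurling-Selberg / Gaussian-subordination framework should furnish explicit one-sided extremal bandlimited approximations whose $L^{1}$-deficit equals a geometric-type series in $e^{-2\pi(\alpha-\frac12)\Delta}$. Once $2\pi\Delta$ is set to the appropriate multiple of $\log t/\log\log t$, this series sums to $H_{n+1}\bigl(\pm(-1)^{(n+1)/2}(\log t)^{1-2\alpha}\bigr)$, which is exactly the first summand of \eqref{Jan31_5:17pm}; the additional $(2\alpha-1)/(\alpha(1-\alpha))$ comes from the analytic (non-zero) part of the Hadamard expansion. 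This handles the odd values $n \ge -1$.

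For $n=0$ and even $n \ge 2$ I would not set up a new Fourier extremal problem but instead adapt the interpolation argument of \cite{CChi}, in which $|S_{n,\alpha}(t)|$ is controlled by (essentially) the geometric mean of uniform bounds for $S_{n-1,\alpha}$ and $S_{n+1,\alpha}$ on a short interval, after optimizing the interval length. Feeding in the odd-$n$ bounds established above then reproduces the geometric-mean structure of \eqref{Jan31_5:15pm}--\eqref{Jan31_5:16pm}. I expect the main obstacle to be not conceptual but one of \emph{uniformity}: every estimate in the explicit formula, in the extremal problem, and in the Guinand-Weil truncation must hold simultaneously as $\alpha \to \tfrac12^+$ (where the Poisson kernel degenerates to a $\delta$ and the argument $(\log t)^{1-2\alpha}$ of $H_{n+1}$ tends to $1$ so the main term threatens to blow up) and as $\alpha \to 1^-$ (where $(\log t)^{2-2\alpha}$ collapses and the natural error threatens to overtake the main term). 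The hypothesis $(1-\alpha)^{2}\log\log t \ge c$ in \eqref{Range_Ess} is calibrated precisely to hold these two regimes apart.
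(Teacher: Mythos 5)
Your overall architecture matches the paper's: a representation of $S_{n,\alpha}(t)$ as a sum over zeros of Poisson-kernel-type functions, extremal bandlimited majorants/minorants from the Gaussian subordination framework for odd $n$, the Guinand--Weil formula, and an interpolation (geometric-mean) argument for even $n$. But there is a genuine gap at the heart of the odd-$n$ case: you treat the sum over prime powers as an error term of size $\ll e^{\Delta}$ to be ``balanced'' against the archimedean main term, and you attribute the summand $\tfrac{2\alpha-1}{\alpha(1-\alpha)}$ in \eqref{Jan31_5:17pm} to the analytic (non-zero) part of the Hadamard expansion. Both points are wrong, and together they would prevent you from obtaining the stated constants. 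For $\alpha>\tfrac12$ the prime-power sum is \emph{not} negligible relative to the main term: after inserting the explicit formula \eqref{FT_maj_general_case} for $\widehat{g}^{\pm}_{2m+1,\alpha,\Delta}$ and evaluating $\sum_{n\le x}\Lambda(n)n^{-\alpha}(\log n)^{-2m-2}$ and its reflected companion via the RH form of the prime number theorem, it contributes a term of the exact same order $(\log t)^{2-2\alpha}/(\log\log t)^{n+1}$, and the difference $\tfrac{1}{1-\alpha}-\tfrac{1}{\alpha}=\tfrac{2\alpha-1}{\alpha(1-\alpha)}$ of the two resulting main terms is precisely where that summand of $C^{\pm}_{n,\alpha}(t)$ comes from (Lemmas \ref{Lem_SOPP_I} and \ref{Lem_SOPP_II}). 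The analytic part of the Hadamard/Stirling expansion only supplies the factor $\log t$ multiplying the $L^1$-deficit (and a $-\tfrac{1}{2\pi}\log\tfrac{t}{2\pi}$ that cancels). A trivial bound on the prime sum cannot see this constant; the asymptotic evaluation is the main new technical content of the theorem compared with the $\alpha=\tfrac12$ case.

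Two further points. Your parameter choice $2\pi\Delta\asymp \log t/\log\log t$ is not what the balancing gives: the prime sum runs over $n\le e^{2\pi\Delta}$ and must be kept at scale $(\log t)^{2}$, so the correct choice is $\pi\Delta=\log\log t$; with your choice the cutoff is $t^{1/\log\log t}$, the quantity $e^{-2\pi(\alpha-\frac12)\Delta}$ is no longer $(\log t)^{1-2\alpha}$, and neither the order of magnitude nor the polylogarithm $H_{n+1}\big(\pm(-1)^{(n+1)/2}(\log t)^{1-2\alpha}\big)$ emerges. Finally, in the interpolation step the case $n=0$ cannot use both bounds for $S_{-1,\alpha}$: the upper bound in \eqref{Fev02_3:09pm} blows up as $\alpha\to\tfrac12^{+}$, so one must run a one-sided variant using only the lower bound for $S_{-1,\alpha}$, which is why \eqref{Jan31_5:15pm} involves $C^{-}_{-1,\alpha}(t)$ alone rather than the harmonic-mean structure of \eqref{Jan31_5:16pm}; your proposal does not account for this asymmetry.
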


\noindent \textsc{Remark:} In the course of the proof of Theorem \ref{Thm2} we obtain slightly stronger bounds than the ones presented in \eqref{Fev02_3:09pm} (see inequalities \eqref{Feb02_2:30pm} and \eqref{New_1017} below). In the statement of Theorem \ref{Thm2} we presented the error terms in \eqref{Fev02_3:09pm} and \eqref{Feb23_4:32pm} in a convenient way for our interpolation argument in Section \ref{Sec_Int}.

\smallskip

Observe that letting $\alpha \to \frac12^+$ in our Theorem \ref{Thm2} (for $n \geq 0$), we obtain a sharpened version of Theorem \ref{Thm1} with improved error terms (a factor $\log \log \log t$ has been removed). In particular, we record here the following consequence, a new proof of the best known bound for $S(t)$ under RH (in fact, with a sharpened error term when compared to \cite{CCM} and \cite{CCM2})\footnote{For an explanation of why all these methods lead to the same constant $1/4$ in the bound for $S(t)$, see \cite[Section 3]{CCM2}.}.

\begin{corollary}
Assume the Riemann hypothesis. For $t>0$ sufficiently large we have
\begin{equation*}
|S(t)| \leq \frac{1}{4} \, \frac{\log t }{\log \log t} + O\left(\frac{\log t }{(\log \log t)^2}\right).
\end{equation*}
\end{corollary}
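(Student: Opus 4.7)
The plan is to derive the corollary as a direct consequence of Theorem~\ref{Thm2} with $n=0$ by letting $\alpha \to \hh^+$. The first task is to identify the limiting values of the constants appearing in \eqref{Jan31_5:15pm}. Since $(\log t)^{1-2\alpha}\to 1$ and $(2\alpha-1)/[\alpha(1-\alpha)]\to 0$ as $\alpha\to \hh^+$, the formula \eqref{Jan31_5:17pm} (applied for $n=1$ and $n=-1$, both odd) together with the identities $H_2(1)=\zeta(2)=\pi^2/6$, $H_2(-1)=\pi^2/12$, and $H_0(-1)=\hh$ gives $\lim_{\alpha\to \hh^+} C_{1,\alpha}^{+}(t)=\pi/48$, $\lim_{\alpha\to \hh^+} C_{1,\alpha}^{-}(t)=\pi/24$, and $\lim_{\alpha\to \hh^+} C_{-1,\alpha}^{-}(t)=1/(2\pi)$. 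Plugging these into \eqref{Jan31_5:15pm}, one finds
$$
\lim_{\alpha\to 1/2^+} C_{0,\alpha}^{\pm}(t)=\sqrt{2\cdot \tfrac{\pi}{16}\cdot \tfrac{1}{2\pi}}=\frac{1}{4},
$$
independent of $t$, consistent with the value of $C_0^{\pm}$ from Theorem~\ref{Thm1}.

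Next, I would pass $\alpha\to \hh^+$ in \eqref{Feb23_4:32pm} with $n=0$, fixing any $c\in(0,1/4)$ so that the range hypothesis \eqref{Range_Ess} holds uniformly for all $\alpha$ sufficiently close to $\hh$ and $t$ sufficiently large. For each such $t$ that is not an ordinate of a nontrivial zero of $\zeta(s)$, the continuous-variation definition of $\arg\zeta(\alpha+it)$---whose path $2\to 2+it\to \alpha+it$ avoids all zeros for $\alpha>\hh$ thanks to RH---shows that $\alpha\mapsto S_{0,\alpha}(t)$ is continuous at $\alpha=\hh$, so $S_{0,\alpha}(t)\to S(t)$. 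On the right-hand side, $(\log t)^{2-2\alpha}\to \log t$ and $(1-\alpha)^2\to \tfrac14$, and the uniformity of the error term in $\alpha$ stated in Theorem~\ref{Thm2} allows one to pass to the limit, yielding the desired inequality
$$
|S(t)|\le \frac{1}{4}\cdot \frac{\log t}{\log\log t}+O\!\left(\frac{\log t}{(\log\log t)^2}\right).
$$
For the remaining $t$ that are ordinates of zeros, I would apply the bound at $t\pm\varepsilon$ for small $\varepsilon>0$, average using the half-sum convention defining $S(t)$, and let $\varepsilon\to 0^+$; continuity in $t$ of the right-hand side preserves the bound.

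The key subtlety to flag is the importance of taking a genuine limit in $\alpha$, rather than fixing $\alpha=\hh+\epsilon(t)$ for some prescribed $\epsilon(t)\to 0$. Because the derivative of the polylogarithm $H_2$ has a logarithmic singularity at $x=1$, the convergence $C_{0,\alpha}^{\pm}(t)\to \tfrac14$ is only of order $\epsilon\log\log t\cdot|\log(\epsilon\log\log t)|$; optimizing $\alpha$ as a function of $t$ would thereby reintroduce a spurious $\log\log\log t$ factor in the error. The limiting argument sidesteps this issue entirely and yields the sharpened error term that improves upon the bounds in \cite{CCM, CCM2}.
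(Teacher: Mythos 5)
Your proposal is correct and follows exactly the route the paper intends: the corollary is recorded there as an immediate consequence of letting $\alpha\to\tfrac12^+$ in Theorem \ref{Thm2} with $n=0$, and your computation of the limits $C_{1,\alpha}^{+}(t)\to\pi/48$, $C_{1,\alpha}^{-}(t)\to\pi/24$, $C_{-1,\alpha}^{-}(t)\to 1/(2\pi)$ via $H_2(\mp1)$ and $H_0(-1)$, giving $C_{0,\alpha}^{\pm}(t)\to 1/4$, matches the paper's constants. The additional details you supply (uniformity of the error term in the range \eqref{Range_Ess}, continuity of $S_{0,\alpha}(t)$ in $\alpha$ under RH, and the half-sum convention at ordinates of zeros) correctly fill in what the paper leaves implicit.
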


Using the lower bound for the function $S_{-1,\alpha}(t)$ in Theorem \ref{Thm2}, we also deduce a new proof of the best known bound for $\log |\zeta(\tfrac{1}{2}\!+\!it)|$ under RH (see \cite{CS} and \cite{CC}).
\begin{corollary} \label{log zeta}
Assume the Riemann hypothesis. For $t>0$ sufficiently large we have
\begin{equation*}
\log \big|\zeta(\tfrac{1}{2}\!+\!it)\big| \leq \frac{\log 2}{2} \frac{\log t }{\log \log t} + O\left(\frac{\log t }{(\log \log t)^2}\right).
\end{equation*}
\end{corollary}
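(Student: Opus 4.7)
The plan is to integrate the lower bound on $\mathrm{Re}\,(\zeta'/\zeta)(\alpha+it) = \pi\, S_{-1,\alpha}(t)$ supplied by Theorem \ref{Thm2}(i) along a horizontal segment. Assume $\zeta(\tfrac12+it) \ne 0$, as otherwise the bound is trivial. Fix a constant $\sigma_0 \in (\tfrac12, 1)$—say $\sigma_0 = \tfrac34$—chosen so that for $t$ sufficiently large the range condition \eqref{Range_Ess} of Theorem \ref{Thm2} holds throughout $[\tfrac12, \sigma_0]$. Since under RH there are no zeros of $\zeta$ on the open segment $(\tfrac12+it, \sigma_0 + it]$, the Cauchy--Riemann identity $\partial_\sigma \log|\zeta(\sigma+it)| = \mathrm{Re}\,(\zeta'/\zeta)(\sigma+it)$ integrates to
\begin{equation*}
\log\big|\zeta\big(\tfrac12+it\big)\big| \;=\; \log|\zeta(\sigma_0+it)| \;-\; \int_{1/2}^{\sigma_0} \mathrm{Re}\, \frac{\zeta'}{\zeta}(\alpha+it)\, d\alpha,
\end{equation*}
and a standard RH bound ($\log|\zeta(\sigma+it)| \ll (\log t)^{2-2\sigma}/\log\log t$ at fixed $\sigma \in (\tfrac12, 1)$) controls $\log|\zeta(\sigma_0+it)|$ by $o(\log t/(\log\log t)^2)$.

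Substituting the Theorem \ref{Thm2}(i) lower bound and using
\[
\pi\, C^{-}_{-1,\alpha}(t) \;=\; \frac{1}{1+(\log t)^{1-2\alpha}} \;+\; \frac{2\alpha-1}{\alpha(1-\alpha)},
\]
the proof reduces to evaluating one main and two subordinate integrals over $[\tfrac12, \sigma_0]$. The main integral is computed exactly via the substitution $u = (\log t)^{1-2\alpha}$:
\[
\int_{1/2}^{\sigma_0} \frac{(\log t)^{2-2\alpha}}{1+(\log t)^{1-2\alpha}}\, d\alpha \;=\; \frac{\log t}{2\log\log t}\, \log\frac{2}{1+(\log t)^{1-2\sigma_0}} \;=\; \frac{\log 2}{2}\cdot \frac{\log t}{\log\log t} \;+\; O\!\left(\frac{\log t}{(\log\log t)^2}\right),
\]
since $\sigma_0 > \tfrac12$ is fixed, producing the main constant $\tfrac{\log 2}{2}$ of the corollary.

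It then remains to show that the secondary integral $\int_{1/2}^{\sigma_0}\tfrac{(2\alpha-1)(\log t)^{2-2\alpha}}{\alpha(1-\alpha)}\, d\alpha$ and the integrated Theorem \ref{Thm2}(i) error $\int_{1/2}^{\sigma_0}\tfrac{(\alpha-\frac12)(\log t)^{2-2\alpha}}{(1-\alpha)^2\log\log t}\, d\alpha$ are both $O(\log t/(\log\log t)^2)$. Under the rescaling $y = (2\alpha-1)\log\log t$ they become, respectively,
\[
\frac{2\log t}{(\log\log t)^2}\int_0^{y_0}\frac{y\, e^{-y}}{1 - y^2/(\log\log t)^2}\, dy
\quad\text{and}\quad
\frac{\log t}{(\log\log t)^3}\int_0^{y_0}\frac{y\, e^{-y}}{(1 - y/\log\log t)^2}\, dy,
\]
with $y_0 = (2\sigma_0-1)\log\log t$; since $\sigma_0 < 1$ is fixed, $y/\log\log t$ stays bounded below $1$, so both denominators are bounded away from zero and the factor $y\, e^{-y}$ makes the $y$-integrals $O(1)$. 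This endpoint analysis—which relies on the precise sharpened form of the error in Theorem \ref{Thm2}(i)—is the main technical step; assembling the three contributions together with the boundary term yields the claimed bound.
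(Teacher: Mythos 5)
Your proposal is correct and follows essentially the same route as the paper: both integrate the Theorem \ref{Thm2}(i) lower bound for $\re\,\tfrac{\zeta'}{\zeta}(\alpha+it)$ over a horizontal segment starting at $\alpha=\tfrac12$, control the boundary term by the standard RH bound for $\log|\zeta(\sigma+it)|$ (the paper cites \cite[Corollary 13.16]{MV}), and extract the constant $\tfrac{\log 2}{2}$ from the integral of $(\log t)^{2-2\alpha}/(1+(\log t)^{1-2\alpha})$. The only difference is cosmetic: you stop at a fixed $\sigma_0=\tfrac34$ and must therefore estimate the $\tfrac{2\alpha-1}{\alpha(1-\alpha)}$ and error integrals over the full range (which you do correctly via the substitution $y=(2\alpha-1)\log\log t$), whereas the paper stops at $\delta=\tfrac12+\tfrac{\log\log\log t}{\log\log t}$ so that these terms are negligible for more immediate reasons.
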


We deduce Corollary \ref{log zeta} from Theorem \ref{Thm2} at the end of this subsection. Observe that the lower bound for $S_{-1,\alpha}(t)$ in Theorem \ref{Thm2} is stable under the limit $\alpha \to \frac12^+$, whereas the upper bound is not. This is somewhat expected since $S(t)$ has jump discontinuities at the ordinates of the non-trivial zeros of $\zeta(s)$. In our case such blow up comes from the fact that we use a bandlimited majorant for the Poisson kernel and, as $\alpha \to \frac12^+$, this Poisson kernel converges to a delta function. This lack of stability may be related to the existence of small gaps between ordinates of zeros of $\zeta(s)$. Something similar can be seen in the
work of Ki \cite{Ki} on the distribution of the zeros of $\zeta'(s)$.

\smallskip

In order to find bounds for $S_{0,\alpha}(t)$ which are stable under the limit $\alpha \to \frac12^+$ (and hence extend Theorem \ref{Thm1}), we modified a bit our interpolation method in \S\ref{Subsec_Poisson_Feb01_11:24am} to use both bounds for $S_{1,\alpha}(t)$ and {\it only the lower bound for $S_{-1,\alpha}(t)$}. If one is interested in bounds as $t \to \infty$ for a {\it fixed $\alpha$ with $\frac12 < \alpha < 1$}, our Theorem \ref{Thm2} yields the following corollary (the bounds below can be made uniform in $\delta>0$ if we consider $\frac12 + \delta \leq \alpha \leq 1 - \delta$.)
\begin{corollary}\label{Cor5}
Assume the Riemann hypothesis and let $n \geq -1$. Let $\hh < \alpha < 1$ be a fixed number. Then
$$|S_{n,\alpha}(t)| \leq \frac{\omega_n}{2^{n+1}\pi}\left(1 + \frac{2\alpha -1}{\alpha(1-\alpha)} + o(1) \right)  \frac{(\log t)^{2-2\alpha}}{(\log \log t)^{n+1}}\,,$$
as $t \to \infty$, where $\omega_n = 1$ if $n$ is odd and $\omega_n = \sqrt{2}$ if $n$ is even. 
\end{corollary}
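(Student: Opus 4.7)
The strategy is to specialize Theorem~\ref{Thm2} to fixed $\alpha \in (\tfrac12, 1)$ and let $t\to\infty$. Under this regime $(\log t)^{1-2\alpha}\to 0$, and the constant $c$ in hypothesis \eqref{Range_Ess} may be chosen depending on $\alpha$, so every $O_{n,c}$ error term is smaller, by a factor $1/\log\log t$, than the stated main term $(\log t)^{2-2\alpha}/(\log\log t)^{n+1}$. Setting $A := 1 + \frac{2\alpha-1}{\alpha(1-\alpha)}$, it therefore suffices to compute $\lim_{t\to\infty} C_{n,\alpha}^{\pm}(t)$ and verify that it equals $\omega_n A/(2^{n+1}\pi)$.

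For $n$ odd (including $n=-1$) I would substitute directly into \eqref{Jan31_5:17pm}. Since $H_{n+1}(x) = \sum_{k\geq 0} x^k/(k+1)^{n+1}$ is continuous at $0$ with $H_{n+1}(0) = 1$, and $(\log t)^{1-2\alpha}\to 0$, this yields $C_{n,\alpha}^{\pm}(t) \to A/(2^{n+1}\pi)$, matching $\omega_n = 1$. For even $n \geq 2$ I would induct on $n$ via \eqref{Jan31_5:16pm}. Inserting the limits of the odd-index constants and simplifying gives
$$\lim_{t\to\infty} C_{n,\alpha}^{\pm}(t) = \left(\frac{A^2}{2^{2n+1}\pi^2}\right)^{1/2} = \frac{\sqrt{2}\,A}{2^{n+1}\pi},$$
matching $\omega_n = \sqrt{2}$.

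The delicate case is $n = 0$. Substituting the odd-case limits into \eqref{Jan31_5:15pm} only gives $C_{0,\alpha}^{\pm}(t) \to A/\pi$, which exceeds the target $\sqrt{2}\,A/(2\pi)$ by a factor of $\sqrt{2}$. This loss occurs because \eqref{Jan31_5:15pm} was designed to remain stable as $\alpha \to \tfrac12^+$, employing only the lower bound $C_{-1,\alpha}^-(t)$ since the upper bound $C_{-1,\alpha}^+(t)$ blows up in that limit. For fixed $\alpha > \tfrac12$, however, both $C_{-1,\alpha}^{\pm}(t)$ are finite (each tending to $A/\pi$), and one may invoke the symmetric interpolation underlying \eqref{Jan31_5:16pm} at $n = 0$, feeding it both bounds on $S_{-1,\alpha}$ from Theorem~\ref{Thm2}. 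The resulting constant is
$$\left(\frac{2\,(C_{1,\alpha}^{+} + C_{1,\alpha}^{-})\, C_{-1,\alpha}^{+}\, C_{-1,\alpha}^{-}}{C_{-1,\alpha}^{+} + C_{-1,\alpha}^{-}}\right)^{1/2} \to \frac{\sqrt{2}\,A}{2\pi} = \frac{\omega_0 A}{2\pi},$$
by the same computation as for even $n \geq 2$. The main obstacle is precisely justifying this substitution: one must revisit the interpolation lemma used to prove Theorem~\ref{Thm2} and verify that, when supplied with both bounds for $S_{-1,\alpha}$, it produces an output of the symmetric form above with error terms that remain $o(1)$ relative to the main term for fixed $\alpha$. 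Once this is confirmed, the asymptotic calculations above deliver the bound stated in the corollary.
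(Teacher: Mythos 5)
Your proposal is correct and follows the paper's own route: for $n\neq 0$ it is the direct limit computation $H_{n+1}\big(\pm(-1)^{(n+1)/2}(\log t)^{1-2\alpha}\big)\to 1$ in \eqref{Jan31_5:17pm} combined with \eqref{Jan31_5:16pm}, and for $n=0$ the paper likewise prescribes re-running the full symmetric interpolation of \S\ref{Subsec_not_Poisson_Feb01_11:31am} with both the upper and lower bounds for $S_{-1,\alpha}(t)$ (both finite for fixed $\alpha>\tfrac12$) to recover the constant of the form \eqref{Jan31_5:16pm}, exactly as you describe. You have correctly identified the one subtlety, namely that \eqref{Jan31_5:15pm} alone loses a factor of $\sqrt{2}$ at $n=0$.
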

\noindent This plainly follows from \eqref{Jan31_5:17pm} and \eqref{Jan31_5:16pm} for $n\neq 0$. For the case $n=0$ one would simply perform the full interpolation method as described in \S\ref{Subsec_not_Poisson_Feb01_11:31am} (using the upper and lower bounds for both $S_{1,\alpha}(t)$ and $S_{-1,\alpha}(t)$) to obtain the optimized constant as in \eqref{Jan31_5:16pm}.

\smallskip

\noindent{\sc Remark:} The extra factor $\sqrt{2}$ in Corollary \ref{Cor5} when $n$ is even comes from \eqref{Jan31_5:16pm} and it is due to our indirect interpolation argument. In principle, if one could directly solve the associated extremal Fourier analysis problem in the case of $n$ even, this could lead to a better bound than \eqref{Jan31_5:16pm}. We note, however, that this is a highly nontrivial problem in approximation theory. See the discussion in \S \ref{Gen_Str} below.

\smallskip

Finally, notice that we have purposely restricted our range to be strictly inside the critical strip, away from the line $\alpha =1$. With our methods it is also possible (by means of some additional technical work) to consider the case when the parameter $\alpha$ is close to $1$, obtaining bounds of the sort $S_{n,\alpha}(t) = O_n(1)$, for $n \geq 1$ (with explicit constants). We do not pursue such matters here, feeling that classical methods in the literature are more suitable to treat this range. In fact, bounds for $S_{n,1}(t)$, for $n \geq 1$, are easily obtainable directly from \eqref{Lem1_eq_1} and the use of Fubini's theorem with the series representation in the region $\{z\in \C; \,\re z >1\}$. These bounds would be equal to our bounds in the cases of $n$ odd, and better in the case of $n$ even, since we use an indirect approach, via interpolation, for these cases. In the particular case of $n=0$, the known bound $|S_{0,1}(t)| \leq \frac{1}{\pi} \log \log \log t + O(1)$ (see \cite[Corollary 13.16]{MV}) is not easily obtainable by our particular interpolation argument.

\begin{proof}[Proof of Corollary \ref{log zeta}] Assuming RH, it follows from \cite[Corollary 13.16]{MV} that
\[
 \log|\zeta(\sigma+it)| \le \log \frac{1}{1-\sigma} + O\!\left( \frac{(\log t)^{2-2\sigma}}{(1-\sigma)\log\log t} \right)
\]
uniformly for $1/2 + 1/\log\log t \le \sigma \le 1 - 1/\log\log t$ and $t\ge 3$. Therefore, letting $\delta = \delta(t) = \frac{1}{2}+\frac{\log\log\log t}{\log\log t}$, we have
\begin{equation*}
\begin{split}
\log|\zeta(\tfrac{1}{2}+it)| &= -\int_{1/2}^\delta \re \frac{\zeta'}{\zeta}(\sigma + it) \, \mathrm{d}\sigma + \log|\zeta(\delta+it)| 
\\
&= -\int_{1/2}^\delta \re \frac{\zeta'}{\zeta}(\sigma + it) \, \mathrm{d}\sigma + O\left(\frac{\log t }{(\log \log t)^2}\right).
\end{split}
\end{equation*}
Since the lower bound in \eqref{Fev02_3:09pm} implies that 
\[
-\re \frac{\zeta'}{\zeta}(\sigma + it) \le \frac{(\log t)^{2-2\sigma}}{1+(\log t)^{1-2\sigma}} + O\!\left( (\sigma\!-\!\tfrac{1}{2})(\log t)^{2-2\sigma} \right)
\]
uniformly for $1/2 < \sigma \le \delta$, we see that
\[
\log|\zeta(\tfrac{1}{2}+it)| \le \int_{1/2}^\delta \left\{ \frac{(\log t)^{2-2\sigma}}{1+(\log t)^{1-2\sigma}} + O\!\left( (\sigma\!-\!\tfrac{1}{2})(\log t)^{2-2\sigma} \right) \right\} \mathrm{d}\sigma  + O\left(\frac{\log t }{(\log \log t)^2}\right).
\]
The corollary now follows from the estimates
\[
\int_{1/2}^\delta \frac{(\log t)^{2-2\sigma}}{1+(\log t)^{1-2\sigma}} \, \mathrm{d}\sigma \ \le \ \int_{1/2}^1 \frac{(\log t)^{2-2\sigma}}{1+(\log t)^{1-2\sigma}} \, \mathrm{d}\sigma \ = \ \frac{\log 2}{2} \frac{\log t}{\log\log t} - \frac{\log t \log(1+1/\log t)}{2\log\log t}
\]
and
\[
\int_{1/2}^\delta (\sigma\!-\!\tfrac{1}{2})(\log t)^{2-2\sigma} \, \mathrm{d}\sigma \ll \frac{\log t }{(\log \log t)^2}.
\]
\end{proof}

\subsection{General Strategy} \label{Gen_Str} Our approach is partly motivated by the ideas of Goldston and Gonek \cite{GG} on the use of the Guinand-Weil explicit formula applied to special functions with compactly supported Fourier transforms. These ideas have later been used by Chandee and Soundararajan \cite{CS} and other authors \cite{CC, CCM, CCM2, CChi,CF,M} to bound several objects related to the Riemann zeta-function and other $L$-functions. It is worth mentioning that here we face severe additional technical challenges in order to fully develop this circle of ideas to reach our desired conclusion.

\smallskip

The strategy can be broadly divided into the following four main steps:

\subsubsection{Step 1: Representation lemma.} The first step is to identify certain functions of a real variable that are naturally connected with the objects to be bounded, in our case the functions $S_{n,\alpha}(t)$. For each $n\geq -1$ and $\hh \leq \alpha \leq 1$ we define the function $f_{n,\alpha}:\R \to \R$ in the following manner.

\smallskip

\begin{itemize}
\item If $n = 2m$, for $m \in \Z_{\ge0}$, we define
\begin{equation}\label{Def_f_2m}
f_{2m,\alpha}(x)=\int_{\alpha}^{3/2}(\sigma-\alpha)^{2m}{\Bigg(\dfrac{x}{(\sigma-\frac{1}{2})^2+x^2}-\dfrac{x}{1+x^2}\Bigg)}\,\d\sigma.
\end{equation}  
\item If $n = 2m+1$, for $m \in \Z_{\ge0}$, we define
\begin{equation}\label{Def_f_2m+1}
f_{2m+1,\alpha}(x)=\frac{1}{2}\int_{\alpha}^{3/2}{\left(\sigma-\alpha\right)^{2m}\log\left(\dfrac{1+x^2}{(\sigma-\hh)^2+x^2}\right)} \,\d\sigma.
\end{equation}
\item If $n = -1$, we define
\begin{equation}\label{Def_f_-1}
f_{-1,\alpha}(x)= \frac{(\alpha - \hh)}{(\alpha - \hh)^2 + x^2}.
\end{equation}
\end{itemize}
We prove a representation lemma (Lemma \ref{Rep_lem}) where we write $S_{n,\alpha}(t)$, for each $n \geq -1$, as a sum of a translate of the function $f_{n,\alpha}$ over the non-trivial zeros of $\zeta(s)$ plus some known terms and a small error.

\subsubsection{Step 2: Extremal functions} Our tool to evaluate sums over the non-trivial zeros of $\zeta(s)$ is the Guinand-Weil explicit formula. However, the functions $f_{n,\alpha}$ defined above do not possess the required smoothness to allow a direct evaluation. In fact, for $\alpha = \tfrac12$ and $n \geq 1$, we have that $f_{n,\frac12}$ is of class $C^{n-1}(\R)$ but not higher (the $n$-th derivative is discontinuous at the origin). Note also that $f_{0,\frac12}$ is discontinuous at the origin and $f_{-1,\frac12}$ is identically zero. For $\hh < \alpha$, the functions $f_{n,\alpha}$ are of class $C^{\infty}(\R)$ but do not have an analytic extension to the strip $\{z \in \C; \ -\hh - \varepsilon < \im z < \hh + \varepsilon\}$. In fact, the functions $f_{n,\alpha}$ are analytic in the strip  $\{z \in \C; \ -(\alpha - \hh) < \im z < (\alpha - \hh)\}$ but the $n$-th derivative of $f_{n,\alpha}$ cannot be extended continuously to the points $\pm(\alpha - \hh)i$, for $n \geq 0$ (for $n = -1$ the function $f_{-1,\alpha}$ has a pole at $\pm(\alpha - \hh)i$).

\smallskip

The idea is then to replace the functions $f_{n,\alpha}$ by suitable bandlimited approximations (real-valued majorants and minorants with compactly supported Fourier transforms) chosen in such a way to minimize the $L^1(\R)-$distance. This is known as the Beurling-Selberg extremal problem in harmonic analysis and approximation theory (the excellent survey of J. D.  Vaaler \cite{V} provides a nice introduction to the subject). In our case, the situation is markedly different depending upon whether $n$ is even or odd. When $n \geq -1$ is odd, the function $f_{n,\alpha}$ is {\it even}, and the robust Gaussian subordination framework of Carneiro, Littmann and Vaaler \cite{CLV} provides the required extremal functions. When $n$ is even, the function $f_{n,\alpha}$ is {\it odd and continuous} (except in the case $n=0$ and $\alpha = \hh$, which was considered in \cite{CCM}). In this general situation, the solution of the Beurling-Selberg extremal problem is unknown. Therefore, we adopt a  different approach based on an interpolation argument.

\subsubsection{Step 3: Guinand-Weil explicit formula and asymptotic analysis.} In the case of $n$ odd, $n\geq -1$, we bound $S_{n,\alpha}(t)$ by applying the Guinand-Weil explicit formula to the Beurling-Selberg majorants and optimizing the size of the support of the Fourier transform. This is possible via a careful asymptotic analysis of all the terms that appear in the explicit formula. In particular, we highlight that one of the main technical difficulties of this work, when compared to \cite{CCM,CChi}, is in the analysis of the sum over primes powers. This term is easily handled in the works \cite{CCM,CChi} when $\alpha = \hh$ but, in the case $\alpha > \hh$ that we treat here, we must perform a much deeper analysis, using the explicit knowledge of the Fourier transform of the majorant function. We collect in two appendices at the end of the paper some of the calculus facts and some of the number theory facts that are needed for this analysis. 

\subsubsection{Step 4: Interpolation tools.} Having obtained the desired bounds for all odd $n$'s, with $n \geq -1$, we proceed with an interpolation argument to obtain the estimate for the even $n$'s in between, exploring the smoothness of $S_{n,\alpha}(t)$ via the mean value theorem. An optimal choice of the parameters involved in the interpolation argument yields the desired bounds for the even $n$'s.

\section{The representation lemma}

In this section we collect some useful auxiliary results. Lemmas \ref{Lem2} and \ref{Rep_lem} below have appeared in \cite[Lemmas 2 and 3]{CChi} in the case $\alpha = \tfrac12$. The proofs for general $\tfrac12\leq\alpha \leq 1$ are essentially analogous. We include here brief versions of these proofs, both for completeness and for the convenience of the reader. Our starting point is the following formula motivated by the work of Selberg \cite[Section 2]{S}. 

\begin{lemma}\label{Lem2}
Assume the Riemann hypothesis. 
\begin{itemize}
\item[(i)] For $n \geq 0$, $\hh\leq\alpha\leq1$ and $t >0$ $($and $t$ not coinciding with the ordinate of a zero of $\zeta(s)$ when $n=0$ and $\alpha = \hh$$)$, we have
\begin{equation}\label{Lem1_eq_1}
S_{n,\alpha}(t) = -\frac{1}{\pi} \,\,\im{\left\{\dfrac{i^{n}}{n!}\int_{\alpha}^{\infty}{\left(\sigma-\alpha\right)^{n}\,\frac{\zeta'}{\zeta}(\sigma+it)}\,\d\sigma\right\}}.
\end{equation}
\item[(ii)] For $n =-1$, $\hh < \alpha\leq1$ and $t >0$, we have
$$S_{-1,\alpha}(t):= S_{0,\alpha}'(t) = \frac{1}{\pi}\, \re \frac{\zeta'}{\zeta}(\alpha + it).$$
\end{itemize}
\end{lemma}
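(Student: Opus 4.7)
The plan is to handle part~(ii) by direct differentiation of the defining identity, and to prove part~(i) by induction on $n\geq 0$. For (ii), writing $S_{0,\alpha}(t) = \tfrac{1}{\pi}\im\log\zeta(\alpha+it)$ and differentiating in $t$, the chain rule gives $\tfrac{d}{dt}\log\zeta(\alpha+it) = i\tfrac{\zeta'}{\zeta}(\alpha+it)$, so
$$S_{-1,\alpha}(t) \;=\; S_{0,\alpha}'(t) \;=\; \tfrac{1}{\pi}\im\!\bigl(i\tfrac{\zeta'}{\zeta}(\alpha+it)\bigr) \;=\; \tfrac{1}{\pi}\re\tfrac{\zeta'}{\zeta}(\alpha+it),$$
valid whenever $t$ is not the ordinate of a zero of $\zeta$.

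For (i), the base case $n=0$ is the classical identity $\log\zeta(\alpha+it) = -\int_\alpha^\infty \tfrac{\zeta'}{\zeta}(\sigma+it)\,\d\sigma$, obtained by antidifferentiation in $\sigma$ together with the Dirichlet-series decay $\log\zeta(\sigma+it)\to 0$ as $\sigma\to\infty$; taking $\tfrac{1}{\pi}\im$ yields the stated formula. For the inductive step, assume the formula for $n-1$. I would integrate it from $\tau=0$ to $\tau=t$, swap the order of integration (Fubini is justified by $\zeta'/\zeta(\sigma+i\tau) = O(2^{-\sigma})$ as $\sigma\to\infty$), and evaluate the inner $\tau$-integral by the antiderivative
$$\int_0^t \tfrac{\zeta'}{\zeta}(\sigma+i\tau)\,\d\tau \;=\; -i\bigl[\log\zeta(\sigma+it) - \log\zeta(\sigma+i0^{+})\bigr],$$
interpreted by analytic continuation along the vertical segment (legitimate under RH for $\sigma>\tfrac12$, where this segment carries no zeros of $\zeta$). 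A single integration by parts in $\sigma$, pairing $u=(\sigma-\alpha)^{n-1}/(n-1)!$ against the log, promotes the weight to $(\sigma-\alpha)^n/n!$ with an extra minus sign; the boundary terms vanish at $\sigma=\alpha$ (weight) and at $\sigma=\infty$ (rapid decay of $\log\zeta$). The outcome is
\begin{equation*}
\begin{split}
\int_0^t S_{n-1,\alpha}(\tau)\,\d\tau &= -\tfrac{1}{\pi}\im\!\left\{\tfrac{i^n}{n!}\int_\alpha^\infty (\sigma-\alpha)^n \tfrac{\zeta'}{\zeta}(\sigma+it)\,\d\sigma\right\} \\
&\quad + \tfrac{1}{\pi}\im\!\left\{\tfrac{i^n}{n!}\int_\alpha^\infty (\sigma-\alpha)^n \tfrac{\zeta'}{\zeta}(\sigma+i0^{+})\,\d\sigma\right\},
\end{split}
\end{equation*}
so by the recursion $S_{n,\alpha}(t) = \int_0^t S_{n-1,\alpha}(\tau)\,\d\tau + \delta_{n,\alpha}$ the claim reduces to showing that this constant-in-$t$ second term equals $-\delta_{n,\alpha}$.

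Matching this constant to $-\delta_{n,\alpha}$ is the delicate step and splits by parity. For $n=2k-1$ odd, $i^n=(-1)^{k-1}i$ is purely imaginary; since $\zeta'/\zeta$ is real on the real line away from its simple pole at $\sigma=1$, only the real (principal-value) part of the $\sigma$-integral contributes to the outer $\im\{\cdot\}$. Iterating integration by parts in $\sigma$ — equivalently, applying the Cauchy simplex identity to convert the $(2k-1)$-fold iterated integral defining $\delta_{2k-1,\alpha}$ into the single integral $\int_\alpha^\infty \log|\zeta(\sigma)|\,(\sigma-\alpha)^{2k-2}/(2k-2)!\,\d\sigma$ — then reproduces $-\delta_{2k-1,\alpha}$ exactly. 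For $n=2k$ even, $i^n=(-1)^k$ is real and the naive real-axis contribution vanishes; the nonzero value of $\delta_{2k,\alpha}$ must instead arise from a Sokhotski--Plemelj-type residue at $s=1$: the simple pole of $\zeta'/\zeta$ there with residue $-1$ endows $\zeta'/\zeta(\sigma+i0^{+})$ with a singular imaginary part $\pi\delta(\sigma-1)$, whose integral against $(\sigma-\alpha)^{2k}/(2k)!$ produces $(1-\alpha)^{2k}/(2k)!$, and this matches $\delta_{2k,\alpha}=(-1)^{k-1}(1-\alpha)^{2k}/(2k)!$ after the sign bookkeeping. I expect this residue-and-branch analysis at the pole $s=1$ to be the main technical obstacle; modulo it, the entire argument is a direct extension of the $\alpha=\tfrac12$ treatment in \cite{CChi}.
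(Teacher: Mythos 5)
Your argument is correct and follows essentially the same route as the paper: induction on $n$, with the crux being the identification of the constant of integration with $\delta_{n,\alpha}$ via the boundary value as $t\to 0^{+}$ (the paper differentiates the right-hand side of \eqref{Lem1_eq_1} and evaluates its limit as $t\to 0^{+}$ by integrating by parts $n$ times, which is precisely your principal-value/Sokhotski--Plemelj analysis at the pole $\sigma=1$ combined with the Cauchy simplex identity). Your sign bookkeeping checks out in both parities: the $\pi\delta(\sigma-1)$ contribution gives $(-1)^{k}(1-\alpha)^{2k}/(2k)! = -\delta_{2k,\alpha}$, and the principal-value integral, after one integration by parts, gives $-\delta_{2k-1,\alpha}$.
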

\begin{proof} For the case $\alpha = \hh$ this is stated without proof in Selberg \cite[Section 2]{S} and it is proved by Fujii in \cite[Lemmas 1 and 2]{F1}. We provide here a proof for general $\hh\leq\alpha\leq1$ by induction for $n\ge0$. The validity of the formula for $n=0$ is clear. Let $R_{n,\alpha}(t)$ be the expression on the right-hand side of \eqref{Lem1_eq_1}. Let us assume that the result holds for $n = 0,1,2, \ldots, m-1$. Differentiating under the integral sign and using integration by parts one can check that $R_{m,\alpha}'(t) = R_{m-1,\alpha}(t) = S_{m-1,\alpha}(t)$ (in case $\alpha = \hh$ and $m=1$ we may assume here that $t$ is not the ordinate of a zero). From \eqref{Intro_eq1_int_S_n} it remains to show that $\lim_{t \to 0^+}R_{m,\alpha}(t) = \delta_{m,\alpha}$ for $m \geq 1$. This follows by integrating by parts $m$ times and then taking the limit as $t \to 0^+$. Part (ii) just follows from the definition of $S_{-1,\alpha}(t)$.
\end{proof}

We are now in position to state the main result of this section, an expression that connects $S_{n,\alpha}(t)$ with the functions $f_{n,\alpha}$ defined in \eqref{Def_f_2m}, \eqref{Def_f_2m+1} and \eqref{Def_f_-1}. In the proof of Theorem \ref{Thm2} we shall only use the case of $n$ odd, but we state here the representation for $n$ even as well, as a result of independent interest.

\begin{lemma}[Representation lemma]\label{Rep_lem}
Assume the Riemann hypothesis. For each $n\geq -1$ and $\hh \leq \alpha \leq 1$ $($except $n = -1$ and $\alpha = \hh$$)$, let $f_{n,\alpha}:\R \to \R$ be defined as in \eqref{Def_f_2m}, \eqref{Def_f_2m+1} and \eqref{Def_f_-1}. For $t\geq2$ $($and $t$ not coinciding with an ordinate of a zero of $\zeta(s)$ in the case $n=0$ and $\alpha = \hh$$)$ the following formulae hold.
\begin{itemize}
\item[(i)] If $n = 2m$, for $m \in \Z_{\ge0}$, then
\begin{equation} \label{Lem2_eq_representation_even}
S_{2m,\alpha}(t)= \frac{(-1)^{m}}{\pi(2m)!} \,\sum_{\gamma}f_{2m,\alpha}(t-\gamma)\,+\,O_m(1).  
\end{equation}
\item[(ii)] If $n = 2m+1$, for $m \in \Z_{\ge0}$, then
\begin{align} \label{Lem2_eq_representation_odd}
S_{2m+1,\alpha}(t)& =\dfrac{(-1)^{m}}{2\pi(2m+2)!}\left(\tfrac{3}{2}-\alpha\right)^{2m+2}\log t-
\dfrac{(-1)^{m}}{\pi(2m)!} \sum_{\gamma} f_{2m+1,\alpha}(t-\gamma) + O_m(1).
\end{align}

\smallskip

\item[(iii)] If $n = -1$, then
\begin{equation} \label{Lem2_eq_representation_-1}
S_{-1,\alpha}(t)= -\frac{1}{2\pi}\log\frac{t}{2\pi} + \frac{1}{\pi} \sum_{\gamma}f_{-1,\alpha}(t-\gamma)+O\left(\frac{1}{t}\right).
\end{equation}
\end{itemize}
The sums in \eqref{Lem2_eq_representation_even}, \eqref{Lem2_eq_representation_odd} and \eqref{Lem2_eq_representation_-1} run over the ordinates of the non-trivial zeros $\rho = \tfrac12 + i \gamma$ of $\zeta(s)$. 
\end{lemma}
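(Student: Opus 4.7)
\emph{Strategy.} For $n\ge 0$ I would start from the representation in Lemma \ref{Lem2}(i) and split the $\sigma$-integral at $\sigma=\tfrac{3}{2}$. The tail $\int_{3/2}^{\infty}(\sigma-\alpha)^{n}(\zeta'/\zeta)(\sigma+it)\,\d\sigma$ contributes $O_n(1)$ because $|(\zeta'/\zeta)(\sigma+it)|\le -(\zeta'/\zeta)(\sigma)\ll 2^{-\sigma}$ from the Dirichlet series, which dominates the polynomial factor $(\sigma-\alpha)^{n}$. On $\sigma\in[\alpha,\tfrac{3}{2}]$ I would apply the Hadamard partial-fraction expansion in its regularized form
\[
\frac{\zeta'}{\zeta}(\sigma+it) \;=\; \frac{\zeta'}{\zeta}(\tfrac{3}{2}+it) \;+\; \sum_{\rho}\!\left(\frac{1}{\sigma+it-\rho}-\frac{1}{\tfrac{3}{2}+it-\rho}\right) \;+\; R(\sigma,t),
\]
where $R(\sigma,t)=O(1/t)$ uniformly in $\sigma\in[\alpha,\tfrac{3}{2}]$ collects the contributions of the pole $1/(s-1)$ and of $\Gamma'/\Gamma$ (by Stirling). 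Each subtracted summand is $O((1+|t-\gamma|)^{-2})$, so the series converges absolutely and the swap of $\sum_{\rho}$ with $\int_{\alpha}^{3/2}(\sigma-\alpha)^{n}\,\d\sigma$ is legitimate.

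\emph{Even case $n=2m$.} Since $\im(1/(\sigma+it-\rho))=-(t-\gamma)/((\sigma-\tfrac12)^{2}+(t-\gamma)^{2})$ and $\im(1/(\tfrac{3}{2}+it-\rho))=-(t-\gamma)/(1+(t-\gamma)^{2})$, integrating the difference against $(\sigma-\alpha)^{2m}$ yields exactly $-f_{2m,\alpha}(t-\gamma)$ by \eqref{Def_f_2m}. The boundary contribution $\tfrac{(3/2-\alpha)^{2m+1}}{2m+1}\im(\zeta'/\zeta)(\tfrac{3}{2}+it)$ is $O_m(1)$ since $(\zeta'/\zeta)(\tfrac{3}{2}+it)$ is bounded; tracking the sign $i^{2m}=(-1)^m$ together with the overall minus sign from Lemma \ref{Lem2} then gives \eqref{Lem2_eq_representation_even}.

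\emph{Odd case $n=2m+1$.} Here I would use the identity $\tfrac{\sigma-1/2}{(\sigma-1/2)^{2}+x^{2}}=\tfrac12\tfrac{d}{d\sigma}\log((\sigma-\tfrac12)^{2}+x^{2})$ and integrate by parts with $u=(\sigma-\alpha)^{2m+1}$ to obtain $\int_{\alpha}^{3/2}(\sigma-\alpha)^{2m+1}\re(1/(\sigma+it-\rho))\,\d\sigma=(2m+1)\,f_{2m+1,\alpha}(t-\gamma)$, by direct comparison with \eqref{Def_f_2m+1}. The subtracted constant-in-$\sigma$ piece $\re(1/(\tfrac{3}{2}+it-\rho))=1/(1+(t-\gamma)^{2})$ gives, once summed, $\sum_{\gamma}1/(1+(t-\gamma)^{2})$; applying the real-part Hadamard formula at $s=\tfrac{3}{2}+it$ together with the Stirling estimate $\re(\Gamma'/\Gamma)(\tfrac{7}{4}+it/2)=\log(t/2)+O(1/t)$ shows this sum equals $\tfrac12\log t+O(1)$. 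Accounting for the sign $i^{2m+1}=(-1)^{m}i$ (which converts $\im$ into $\re$) and for the factor $(2m+1)!/(2m)!=2m+1$ produces the coefficient $(-1)^{m}(\tfrac{3}{2}-\alpha)^{2m+2}/(2\pi(2m+2)!)$ in front of $\log t$ in \eqref{Lem2_eq_representation_odd}.

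\emph{Case $n=-1$ and main obstacle.} Part (iii) follows directly from Lemma \ref{Lem2}(ii) together with the classical uniform asymptotic
\[
\re\frac{\zeta'}{\zeta}(\sigma+it)\;=\;-\tfrac{1}{2}\log\tfrac{t}{2\pi}\;+\;\sum_{\gamma}\frac{\sigma-\tfrac12}{(\sigma-\tfrac12)^{2}+(t-\gamma)^{2}}\;+\;O(1/t)
\]
for $\tfrac12<\sigma\le1$ (itself an immediate consequence of Hadamard and Stirling), recognizing the summand as $f_{-1,\alpha}(t-\gamma)$ by \eqref{Def_f_-1}. The only genuinely delicate point throughout is the conditional convergence of the bare Hadamard sum: the interchange of $\sum_{\rho}$ with the $\sigma$-integral is justified only after the subtraction at $\sigma=\tfrac{3}{2}$, and the precise constant coming from Stirling at $s=\tfrac{3}{2}+it$ is what distinguishes the odd case (producing the $\log t$ term) from the even case (where the analogous imaginary part is bounded).
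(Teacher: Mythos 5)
Your argument is correct, and for parts (i) and (iii) it coincides with the paper's own proof: the even case and the $n=-1$ case are handled there exactly as you propose, via the partial-fraction decomposition \eqref{partial_fraction_dec} with the subtraction of $\frac{\zeta'}{\zeta}(\tfrac32+it)$, plus Stirling. Where you genuinely diverge is the odd case (ii). The paper first integrates by parts in \eqref{expansion_S_m_odd} to convert $(\sigma-\alpha)^{2m+1}\frac{\zeta'}{\zeta}$ into $(\sigma-\alpha)^{2m}\log|\zeta(\sigma+it)|$, then expands $\log|\zeta|$ via the Hadamard product for $\xi(s)$ together with Stirling (equation \eqref{Stirling_1}); the $\log t$ term comes from the Gamma factor, and the interchange of $\sum_\gamma$ with $\int_\alpha^{3/2}$ is immediate by monotone convergence because every summand $\log\frac{1+(t-\gamma)^2}{(\sigma-1/2)^2+(t-\gamma)^2}$ is nonnegative. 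You instead keep the additive decomposition of $\zeta'/\zeta$ throughout and integrate by parts term-by-term inside the zero sum, recovering $(2m+1)f_{2m+1,\alpha}(t-\gamma)$ directly (the boundary term $\frac{(3/2-\alpha)^{2m+1}}{2}\log(1+(t-\gamma)^2)$ at $\sigma=\tfrac32$ combines exactly with the $\log(1+x^2)$ part of \eqref{Def_f_2m+1}, which your computation implicitly uses), and you produce the $\log t$ from the classical estimate $\sum_\gamma(1+(t-\gamma)^2)^{-1}=\tfrac12\log t+O(1)$. This buys a uniform treatment of the even and odd cases from a single decomposition. The cost is a slightly more delicate interchange: your summands are no longer single-signed, and the bound $O((1+|t-\gamma|)^{-2})$ you invoke is not uniform in $\sigma$ near $\tfrac12$ when $\gamma$ is close to $t$, since $|\sigma+it-\rho|=((\sigma-\tfrac12)^2+(t-\gamma)^2)^{1/2}$ can be as small as $|t-\gamma|$. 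The interchange still goes through: for the $O(\log t)$ zeros with $|t-\gamma|<1$ one checks $\int_\alpha^{3/2}((\sigma-\tfrac12)^2+(t-\gamma)^2)^{-1/2}\,\d\sigma\ll 1+\log(1/|t-\gamma|)$, which is finite when $t$ is not an ordinate and is further tamed by the vanishing factor $(\sigma-\alpha)^n$ when $\alpha=\tfrac12$ and $n\ge1$ (this is exactly why the lemma excludes only the case $n=0$, $\alpha=\tfrac12$, $t=\gamma$). You should replace the claimed uniform bound with this explicit Fubini--Tonelli verification, but no new idea is needed.
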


\begin{proof}
We first treat (ii). It follows from Lemma \ref{Lem2} and integration by parts that 
\begin{align}\label{expansion_S_m_odd}
\begin{split}
S_{2m+1,\alpha}(t) &= -\frac{1}{\pi} \,\,\im {\left\{\frac{i^{2m+1}}{(2m+1)!}\int_{\alpha}^{\infty}{\left(\sigma-\alpha\right)^{2m+1}\,\frac{\zeta'}{\zeta}(\sigma+it)}\,\d\sigma\right\}}\\
& = \frac{(-1)^{m+1}}{\pi (2m+1)!}\, \re {\left\{\int_{\alpha}^{\infty}{\left(\sigma-\alpha\right)^{2m+1}\,\frac{\zeta'}{\zeta}(\sigma+it)}\,\d\sigma\right\}}\\
& = \frac{(-1)^{m}}{\pi (2m)!}\, \re {\left\{\int_{\alpha}^{\infty}{\left(\sigma-\alpha\right)^{2m}\,\log \zeta(\sigma+it)}\,\d\sigma\right\}}\\
& = \frac{(-1)^{m}}{\pi (2m)!}\, \int_{\alpha}^{3/2}{\left(\sigma-\alpha\right)^{2m}\,\log |\zeta(\sigma+it)}|\,\d\sigma + O_m(1).
\end{split}
\end{align}
The idea is to replace the integrand by an absolutely convergent sum over the zeros of $\zeta(s)$ and then integrate term-by-term. In order to do so, we start with Riemann's $\xi-$function defined by 
$$\xi(s)=\tfrac{1}{2}\,s\,(s-1)\,\pi^{-s/2}\,\Gamma\left(\tfrac{s}{2}\right)\,\zeta(s).$$ 
The function $\xi(s)$ is entire of order $1$ and the zeros of $\xi(s)$ correspond to the non-trivial zeros of $\zeta(s)$. By Hadamard's factorization formula (cf. \cite[Chapter 12]{Dav}), we have
\begin{equation*}
\xi(s)=e^{A+Bs}\displaystyle\prod_{\rho}\bigg(1-\frac{s}{\rho}\bigg)e^{s/\rho}\,,
\end{equation*}
where $\rho = \beta + i \gamma$ runs over the non-trivial zeros of $\zeta(s)$, A is a constant and $B=-\sum_{\rho}\re(1/\rho)$. Note that $\re(1/\rho)$ is positive and that $\sum_{\rho}\re(1/\rho)$ converges absolutely. Assuming the Riemann hypothesis, it follows that
\begin{equation*}
\left|\dfrac{\xi(\sigma+it)}{\xi(\tfrac32+it)}\right|=\displaystyle\prod_{\gamma}\left(\dfrac{\big(\sigma-\tfrac12)^2+(t-\gamma)^2}{1+(t-\gamma)^2}\right)^{\frac12}.
\end{equation*}
Hence
$$
\log|\xi(\sigma+it)|-\log\left|\xi\left(\tfrac{3}{2}+it\right)\right|= \dfrac{1}{2}\displaystyle\sum_{\gamma}\log\left(\dfrac{\big(\sigma-\tfrac12)^2+(t-\gamma)^2}{1+(t-\gamma)^2}\right).
$$
From Stirling's formula for $\Gamma(s)$ (cf. \cite[Chapter 10]{Dav}) we obtain
\begin{align} \label{Stirling_1}
\begin{split}
\log|\zeta(\sigma+it)|&=\left(\tfrac{3}{4}-\tfrac{\sigma}{2}\right)\log t - \frac{1}{2}\displaystyle\sum_{\gamma}\log\left(\frac{1+(t-\gamma)^2}{\big(\sigma-\frac12)^2+(t-\gamma)^2}\right) + O(1), 
\end{split}
\end{align}
uniformly for $\tfrac{1}{2}\leq \sigma \leq \tfrac{3}{2}$ and $t\geq 2$. Inserting \eqref{Stirling_1} into \eqref{expansion_S_m_odd} yields
\begin{align*}
S_{2m+1,\alpha}(t)  & = \dfrac{(-1)^{m}}{\pi(2m)!}\left(\int_{\alpha}^{3/2}\left(\sigma-\alpha\right)^{2m}\left(\tfrac{3}{4}-\tfrac{\sigma}{2}\right) \,\d \sigma \right)\log t\\
& \ \ \ \ \ \ \ \ \ \ \ -\frac{(-1)^{m}}{2\pi(2m)!} \int_{\alpha}^{3/2}{\sum_{\gamma}\left(\sigma-\alpha\right)^{2m}\log\left(\dfrac{1+(t-\gamma)^2}{(\sigma-\tfrac12)^2+(t-\gamma)^2}\right)} \,\d\sigma + O_m(1) \\
& = \frac{(-1)^{m}}{2\pi(2m+2)!}\left(\tfrac{3}{2}-\alpha\right)^{2m+2}\log t \\
&  \ \ \ \ \ \ \ \ \ \ \ -\frac{(-1)^{m}}{2\pi(2m)!} \sum_{\gamma}\int_{\alpha}^{3/2}\left(\sigma-\alpha\right)^{2m}\log\left(\dfrac{1+(t-\gamma)^2}{(\sigma-\tfrac12)^2+(t-\gamma)^2}\right) \,\d\sigma + O_m(1) \\
              & = \dfrac{(-1)^{m}}{2\pi(2m+2)!}\left(\tfrac{3}{2}-\alpha\right)^{2m+2}\log t-\dfrac{(-1)^{m}}{\pi(2m)!} \sum_{\gamma} f_{2m+1,\alpha}(t-\gamma) + O_m(1),         
\end{align*}
where the interchange between summation and integration can be justified, for instance, by the monotone convergence theorem, since all the terms involved are nonnegative. This concludes the proof of (ii).

\smallskip

We now move to the proof of (iii). Let $s=\alpha+it$ and recall that we are assuming $t \geq 2$. From the partial fraction decomposition for $\zeta'(s)/\zeta(s)$ (cf. \cite[Chapter 12]{Dav}), we have
\begin{align}\label{partial_fraction_dec}
	\dfrac{\zeta'}{\zeta}(s) & = \displaystyle\sum_{\rho}\bigg(\dfrac{1}{s-\rho}+\frac{1}{\rho}\bigg)-\dfrac{1}{2}\dfrac{\Gamma'}{\Gamma}\bigg(\dfrac{s}{2}+1\bigg)+B+\frac{1}{2}\log\pi - \frac{1}{s-1},
\end{align}
with $B=-\sum_{\rho}\re(1/\rho)$.
Again using Stirling's formula 
we obtain
\begin{align*} 
	S_{-1,\alpha}(t)= \frac{1}{\pi}\, \re \frac{\zeta'}{\zeta}(\alpha + it)& = - \dfrac{1}{2\pi}\log\frac{t}{2\pi} + \dfrac{1}{\pi}\sum_{\gamma}f_{-1,\alpha}(t-\gamma)+O\bigg(\frac{1}{t}\bigg).
\end{align*}
This proves (iii).

\smallskip

Finally, the proof of (i) follows along the same lines, starting with \eqref{Lem1_eq_1}, restricting the range of integration to the interval $(\alpha, \tfrac32)$, and using the partial fraction decomposition \eqref{partial_fraction_dec} after adding and subtracting a term $\frac{\zeta'}{\zeta}(\tfrac32 +it)$ to balance the equation. The details of the proof are left to the interested reader.
\end{proof}

In the proof of Theorem \ref{Thm2} we will be using the following version of the Guinand-Weil explicit formula which connects the zeros of the zeta function and the prime powers. 

\begin{lemma}[Guinand-Weil explicit formula] \label{GW}
Let $h(s)$ be analytic in the strip $|\im{s}|\leq \tfrac12+\varepsilon$ for some $\varepsilon>0$, and assume that $|h(s)|\ll(1+|s|)^{-(1+\delta)}$ for some $\delta>0$ when $|\re{s}|\to\infty$. Let $h(w)$ be real-valued for real $w$, and let $\widehat{h}(x)=\int_{-\infty}^{\infty}h(w)e^{-2\pi ixw}\,\dw$. Then
	\begin{align*}
	\displaystyle\sum_{\rho}h\left(\frac{\rho-\frac12}{i}\right) & = h\left(\dfrac{1}{2i}\right)+h\left(-\dfrac{1}{2i}\right)-\dfrac{1}{2\pi}\widehat{h}(0)\log\pi+\dfrac{1}{2\pi}\int_{-\infty}^{\infty}h(u)\,\re{\dfrac{\Gamma'}{\Gamma}\left(\dfrac{1}{4}+\dfrac{iu}{2}\right)}\,\du \\
	 &  \ \ \ \ \ \ \ \ \ \ \ \ \ -\dfrac{1}{2\pi}\displaystyle\sum_{n\geq2}\dfrac{\Lambda(n)}{\sqrt{n}}\left(\widehat{h}\left(\dfrac{\log n}{2\pi}\right)+\widehat{h}\left(\dfrac{-\log n}{2\pi}\right)\right)\,, 
	\end{align*}
where $\rho = \beta + i \gamma$ are the non-trivial zeros of $\zeta(s)$, $\Gamma'/\Gamma$ is the logarithmic derivative of the Gamma function, and $\Lambda(n)$ is the Von-Mangoldt function defined to be $\log p$ if $n=p^m$ with $p$ a prime number and $m\geq 1$ an integer, and zero otherwise.
\end{lemma}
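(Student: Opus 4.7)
The plan is the classical contour-integration proof via the completed zeta function $\xi(s) = \tfrac12 s(s-1)\pi^{-s/2}\Gamma(s/2)\zeta(s)$, which is entire of order $1$ with zeros precisely at the non-trivial zeros of $\zeta$. Setting $F(s) := h((s-1/2)/i)$, the hypotheses make $F$ analytic and rapidly decaying in a vertical strip containing $-\varepsilon \le \re s \le 1+\varepsilon$. For $1 < c < 1+\varepsilon$ and $T$ large (chosen to avoid ordinates of zeros), I would apply the residue theorem to
\[
\frac{1}{2\pi i}\oint_R F(s)\,\frac{\xi'}{\xi}(s)\,\d s, \qquad R = \text{rectangle with vertices } c\pm iT,\ 1-c\pm iT,
\]
obtaining the sum $\sum_{|\gamma|<T} h((\rho-1/2)/i)$. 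Letting $T \to \infty$, the horizontal sides vanish thanks to the decay of $h$ combined with the classical estimate $\xi'/\xi(\sigma+iT)\ll \log^2 T$, uniform in $1-c \le \sigma \le c$ for such $T$. The functional equation $\xi(s)=\xi(1-s)$ gives $\xi'/\xi(1-s) = -\xi'/\xi(s)$, so the change of variables $s\mapsto 1-s$ on the left vertical side folds the two vertical contributions into
\[
\sum_\gamma h\!\left(\frac{\rho - 1/2}{i}\right) = \frac{1}{2\pi i}\int_{(c)} \{F(s) + F(1-s)\}\,\frac{\xi'}{\xi}(s)\,\d s.
\]

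Next, I would substitute the logarithmic decomposition
\[
\frac{\xi'}{\xi}(s) = \frac{1}{s} + \frac{1}{s-1} - \frac{\log\pi}{2} + \frac{1}{2}\frac{\Gamma'}{\Gamma}\!\left(\frac{s}{2}\right) + \frac{\zeta'}{\zeta}(s)
\]
and compute each of the five resulting integrals. For the first four pieces, shift the contour from $\re s = c$ to $\re s = 1/2$; the only pole encountered is that of $1/(s-1)$ at $s=1$, whose residue (paired against $F(s) + F(1-s)$) is $F(1)+F(0) = h(1/(2i)) + h(-1/(2i))$. On $\re s = 1/2$ writing $s = 1/2 + iu$ yields $F(s) + F(1-s) = h(u) + h(-u)$; the $1/s$ and $1/(s-1)$ line contributions then combine into the integral of an odd function in $u$ and vanish, the $-\log\pi/2$ piece produces $-\tfrac{\log\pi}{2\pi}\,\widehat{h}(0)$, and the $\tfrac12 \Gamma'/\Gamma(s/2)$ piece, after using $\overline{(\Gamma'/\Gamma)(1/4+iu/2)} = (\Gamma'/\Gamma)(1/4-iu/2)$ to fold the integral over $u$, gives exactly the real-part integral appearing in the statement.

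For the $\zeta'/\zeta$ piece I would not shift the contour (there is a pole at $s=1$) but instead insert the Dirichlet series $\zeta'/\zeta(s) = -\sum_n \Lambda(n)\, n^{-s}$, which converges absolutely on $\re s = c > 1$. Interchanging sum and integral (legitimate by the decay of $h$), each summand becomes an integral on $\re s = c$ of the analytic function $(F(s)+F(1-s))\,n^{-s}$, which has no poles in the strip $1/2 \le \re s \le c$ and can be shifted to the critical line. Parametrizing $s = 1/2 + iu$, the two parts of the integrand produce $\tfrac{1}{\sqrt n}\widehat{h}(\log n/(2\pi))$ and $\tfrac{1}{\sqrt n}\widehat{h}(-\log n/(2\pi))$ respectively, and collecting with the overall prefactor yields precisely the prime-power term in the statement. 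The main technical obstacle is the uniform control required to kill the horizontal sides as $T\to\infty$: this rests on the bound $\xi'/\xi(\sigma+iT)\ll \log^2 T$ along horizontal lines passing through gaps between consecutive ordinates of zeros (a classical but careful estimate), together with enough decay of $h$ to absorb this logarithmic growth.
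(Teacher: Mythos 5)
The paper does not prove this lemma itself --- it simply cites \cite[Lemma 1]{GG} and \cite[Theorem 5.12]{IK} --- and your proposal reproduces, correctly, the standard contour-integration argument that underlies those references: the residue computation giving $F(1)+F(0)=h(1/(2i))+h(-1/(2i))$, the folding of the two vertical sides via $\xi(s)=\xi(1-s)$, the vanishing of the $\tfrac1s+\tfrac1{s-1}$ contribution on the critical line as the integral of an odd function against the even function $h(u)+h(-u)$, the Schwarz-reflection identity producing the real part of $\Gamma'/\Gamma$, and the term-by-term contour shift of the Dirichlet series all check out. The only points left implicit are routine: the absolute convergence of $\sum_\rho h\big((\rho-\tfrac12)/i\big)$, which follows from the decay hypothesis together with $N(T+1)-N(T)\ll \log T$, and the standard selection of heights $T$ with $|T-\gamma|\gg 1/\log T$ for all ordinates $\gamma$ so that the bound $\tfrac{\xi'}{\xi}(\sigma+iT)\ll \log^2 T$ on the horizontal sides is actually available.
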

\begin{proof}
	This is \cite[Lemma 1]{GG}. The proof follows from \cite[Theorem 5.12]{IK}. 
\end{proof}

As noted in the introduction, the functions $f_{n,\alpha}$ that appear in Lemma \ref{Rep_lem} do not possess the required smoothness properties to allow the application of the Guinand-Weil formula. The key idea to prove Theorem \ref{Thm2}, {\it in the case of $n$ odd}, is to replace the functions $f_{n,\alpha}$ by appropriate extremal majorants and minorants of exponential type (thus with a compactly supported Fourier transform by the Paley-Wiener theorem). These bandlimited approximations are described in the next section.

\section{Extremal bandlimited approximations}\label{Extremal_functions_section}

\subsection{Preliminaries} Recall that an entire function $G:\C \to \C$ is said to have exponential type $\tau$ if 
$$\limsup_{|z| \to \infty} \frac{\log |G(z)|}{|z|} \leq \tau.$$ 
The celebrated Paley-Wiener theorem states that a function $g \in L^2(\R)$ has Fourier transform supported in the interval $[-\Delta, \Delta]$ if and only if it is equal almost everywhere to the restriction to $\R$ of an entire function of exponential type $2\pi \Delta$. The term {\it bandlimited} is commonly used in the applied literature in reference to functions that have compactly supported Fourier transforms.

\smallskip

In this section we consider a particular instance of the so called Beurling-Selberg extremal problem in Fourier analysis and approximation theory. In general terms, this is the problem of finding one-sided approximations of real-valued functions by entire functions of prescribed exponential type, seeking to minimize the $L^1(\R)-$error. This problem has its origins in the work of A. Beurling in the late 1930's, in which he constructed extremal majorants and minorants of exponential type for the signum function. Later, A. Selberg used Beurling's extremal functions to produce majorants and majorants for characteristic functions of intervals, and applied these in connection to large sieve inequalities. The survey \cite{V} by J. D. Vaaler is the classical reference on the subject, describing some of the historical milestones of the problem and presenting several interesting applications of such special functions in analysis and number theory.

\smallskip

In recent years there has been considerable progress both in the constructive aspects and in the range of applications of such extremal bandlimited approximations. For the constructive theory we highlight, for instance, the works \cite{CL, CLV, CV2, GV, Ke, Litt, Litt2, LS} in the one-dimensional theory and the works \cite{CG, CL2, CL3, GKM, HV} in the multi-dimensional and weighted theory. These allowed new applications in the theory of the Riemann zeta-function and general $L$-functions, for instance in \cite{CC, CCLM, CCM, CCM2, CChi, CF, CS, G, GG, M}.

\smallskip

The {\it Gaussian subordination framework} of Carneiro, Littmann and Vaaler \cite{CLV} is a robust method to solve the Beurling-Selberg problem for even functions in dimension one. This is the main tool we shall use in this section. In particular, functions $g:\mathbb{R}\to\mathbb{R}$ of the form
\[
g(x)=\int_{0}^{\infty}e^{-\pi\lambda x^2}\,\d\nu(\lambda),
\]
where $\nu$ is a finite nonnegative Borel measure on $(0,\infty)$, fall under the scope of \cite{CLV}. It turns out that our functions $f_{n,\alpha}$ when $n$ is odd, defined in \eqref{Def_f_2m+1} and \eqref{Def_f_-1}, are included in this class as we shall see from the results below. Moreover, it is also crucial for our purposes to have a detailed description of the Fourier transforms of our majorants and minorants in order to analyze the contribution from the primes and prime powers in the explicit formula.

\subsection{Approximations to the Poisson kernel} We start with the case of the Poisson kernel $f_{-1,\alpha}$. In order to simplify the notation we let $\beta = \alpha - \hh$ and define
\begin{equation}\label{Def_Poisson_kernel_beta}
h_{\beta}(x) := f_{-1,\alpha}(x) = \frac{\beta}{\beta^2 + x^2}.
\end{equation}
The solution of the extremal problem for the Poisson kernel below is of independent interest and may have other applications in analysis and number theory. Recall that a {\it real entire function} is an entire function whose restriction to $\R$ is real-valued.

\begin{lemma}[Extremal functions for the Poisson kernel]\label{lemma_extr_Poisson} Let $\beta >0$ be a real number and let $\Delta >0$ be a real parameter. Let $h_{\beta}:\R \to \R$ be defined as in \eqref{Def_Poisson_kernel_beta}. Then there is a unique pair of real entire functions $m_{\beta,\Delta}^{-}:\mathbb{C}\to\mathbb{C}$ and $m_{\beta,\Delta}^{+}:\mathbb{C}\to\mathbb{C}$ satisfying the following properties:
	\begin{itemize}
	\item[(i)] The real entire functions $m_{\beta,\Delta}^{\pm}$ have exponential type $2\pi\Delta$.
	
	\smallskip
	
	\item[(ii)] The inequality 
	$$m_{\beta,\Delta}^{-}(x) \leq h_{\beta}(x) \leq m_{\beta,\Delta}^{+}(x)$$
holds pointwise for all $x \in \R$.

\smallskip

	\item[(iii)] Subject to conditions {\rm(i)} and {\rm (ii)}, the value of the integral 
$$ \int_{-\infty}^{\infty}\big\{m^{+}_{\beta,\Delta}(x)-m^{-}_{\beta,\Delta}(x)\big\}\,\dx$$
is minimized.
\end{itemize}

\smallskip

\noindent The functions $m_{\beta,\Delta}^{\pm}$ are even and verify the following additional properties:

\smallskip

	\begin{itemize}
	 \item[(iv)]The $L^1-$distances of $m_{\beta,\Delta}^{\pm}$ to $h_{\beta}$ are explicitly given by
	 \begin{equation}\label{Poisson_int1}
		\int_{-\infty}^{\infty}\big\{m^{+}_{\beta,\Delta}(x)-h_{\beta}(x)\big\}\,\dx=\dfrac{2\pi e^{-2\pi\beta\Delta}}{1-e^{-2\pi\beta\Delta}}
		\end{equation}
		and
		\begin{equation}\label{Poisson_int2}
		\int_{-\infty}^{\infty}\big\{h_{\beta}(x)-m^{-}_{\beta,\Delta}(x)\big\}\,\dx=\dfrac{2\pi e^{-2\pi\beta\Delta}}{1+e^{-2\pi\beta\Delta}}.
		\end{equation}

		\item[(v)] The Fourier transforms of $m_{\beta,\Delta}^{\pm}$, namely
		\[
		\widehat{m}_{\beta,\Delta}^{\pm}(\xi)=\int_{-\infty}^{\infty}m_{\beta,\Delta}^{\pm}(x)\,e^{-2\pi ix\xi}\,\dx\,,
		\]
		are even continuous functions supported on the interval $[-\Delta,\Delta]$ given by
		\begin{equation} \label{FT_maj_min_Poisson}
		\widehat{m}_{\beta,\Delta}^{\pm}(\xi) = \pi \left(\dfrac{e^{2\pi\beta(\Delta - |\xi|)}-e^{-2\pi\beta(\Delta-|\xi|)}}{\left(e^{\pi\beta\Delta}\mp e^{-\pi\beta\Delta}\right)^2}\right).
		\end{equation}
		
		\smallskip
		
		\item[(vi)] The functions $m_{\beta,\Delta}^{\pm}$ are explicitly given by
		\begin{equation}\label{Explicit_expression_maj_min_Poisson}
		m_{\beta,\Delta}^{\pm}(z) = \left(\frac{\beta}{\beta^2 + z^2}\right) \left( \frac{e^{2\pi \beta \Delta} +  e^{-2\pi \beta \Delta} - 2\cos(2\pi\Delta z)}{\left(e^{\pi\beta\Delta}\mp e^{-\pi\beta\Delta}\right)^2} \right).
		\end{equation}
In particular, the function $m_{\beta,\Delta}^{-}$ is nonnegative on $\R$.	

\smallskip

	\item[(vii)] Assume that $0 < \beta \leq \frac12$ and $\Delta \geq 1$. For any real number $x$ we have
	\begin{equation}\label{decay_square_maj_Poisson}
	 0 < m_{\beta,\Delta}^{-}(x) \leq h_{\beta}(x) \leq  m_{\beta,\Delta}^{+}(x) \ll \frac{1}{\beta(1 + x^2)},
	 \end{equation}
and, for any complex number $z=x+iy$, we have
		\begin{align}\label{Complex_est_maj}
		\big|m_{\beta,\Delta}^{+}(z)\big|\ll \frac{\Delta^{2}e^{2\pi\Delta|y|}}{\beta(1+\Delta|z|)} 
		\end{align}
		and
		\begin{align}\label{Complex_est_min}
		\big|m_{\beta,\Delta}^{-}(z)\big|\ll \dfrac{\beta \Delta^{2}e^{2\pi\Delta|y|}}{1+\Delta|z|},   
		\end{align}
where the constants implied by the $\ll$ notation are universal.	
\end{itemize}	
\end{lemma}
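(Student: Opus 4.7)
The plan is to invoke the Gaussian subordination framework of Carneiro, Littmann and Vaaler \cite{CLV}. The first step is to express the Poisson kernel as a positive superposition of Gaussians. Starting from the elementary identity $(\beta^2+x^2)^{-1}=\int_0^\infty e^{-(\beta^2+x^2)t}\,\d t$ and rescaling $t=\pi\lambda$, one obtains
\begin{equation*}
h_\beta(x) = \frac{\beta}{\beta^2 + x^2} = \int_0^\infty e^{-\pi\lambda x^2}\,\d\nu_\beta(\lambda), \qquad \d\nu_\beta(\lambda)=\pi\beta\,e^{-\pi\beta^2\lambda}\,\d\lambda.
\end{equation*}
Since $\nu_\beta$ is a finite nonnegative Borel measure on $(0,\infty)$, this places $h_\beta$ within the admissible class of \cite{CLV}, which produces a unique pair of real entire extremal majorants and minorants $m^\pm_{\beta,\Delta}$ of exponential type $2\pi\Delta$ that minimize the $L^1$-error. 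Properties (i)--(iii), evenness, and uniqueness thus follow directly from the main results of \cite{CLV}.

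Next, I would derive the closed forms. The \cite{CLV} framework writes $m^\pm_{\beta,\Delta}$ (and their Fourier transforms) as $\nu_\beta$-integrals of the corresponding extremal approximations to each Gaussian $e^{-\pi\lambda x^2}$, themselves expressed via Jacobi theta functions. Substituting the explicit measure $\d\nu_\beta$ and summing the geometric series hidden in the hyperbolic factors should collapse the answer to \eqref{FT_maj_min_Poisson} and \eqref{Explicit_expression_maj_min_Poisson}. Alternatively, one can guess \eqref{FT_maj_min_Poisson} from the structural constraints (even, continuous, supported on $[-\Delta,\Delta]$, matching $\widehat{h}_\beta(\xi)=\pi e^{-2\pi\beta|\xi|}$ with the appropriate behavior at $|\xi|=\Delta$) and verify it a posteriori: invert the Fourier transform by expanding $\sum_{k\geq 0}e^{-2\pi\beta\Delta(2k+1)}$-type series to recover the closed form \eqref{Explicit_expression_maj_min_Poisson}, then check that $m^\pm_{\beta,\Delta}$ is entire of exponential type $2\pi\Delta$, sandwiches $h_\beta$ on $\R$, and interpolates $h_\beta$ (with matching derivatives) at the half-integer (respectively, nonzero integer) translates of $1/(2\Delta)$. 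Standard Beurling--Selberg interpolation then forces extremality and, combined with the uniqueness from \cite{CLV}, identifies these as $m^\pm_{\beta,\Delta}$.

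The distances in (iv) follow by evaluating $\widehat{m}^\pm_{\beta,\Delta}(0)-\widehat{h}_\beta(0)$, using $\widehat{h}_\beta(0)=\pi$ and simplifying \eqref{FT_maj_min_Poisson} at $\xi=0$. Nonnegativity of $m^-_{\beta,\Delta}$ is immediate from \eqref{Explicit_expression_maj_min_Poisson}, since $e^{2\pi\beta\Delta}+e^{-2\pi\beta\Delta}-2\cos(2\pi\Delta x)\geq(e^{\pi\beta\Delta}-e^{-\pi\beta\Delta})^2\geq 0$. For the pointwise estimates (vii), on the real line I would bound the bracketed factor in \eqref{Explicit_expression_maj_min_Poisson} by $4/(e^{\pi\beta\Delta}\mp e^{-\pi\beta\Delta})^2$, which in the regime $0<\beta\leq\tfrac12$, $\Delta\geq 1$ is $\ll(\beta\Delta)^{-2}$ for the $+$ case and $O(1)$ for the $-$ case; one then combines with $\beta/(\beta^2+x^2)\ll(1+x^2)^{-1}$ after splitting $|x|\leq\beta$ versus $|x|>\beta$. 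For complex $z=x+iy$, the growth $e^{2\pi\Delta|y|}$ is provided by $\cos(2\pi\Delta z)$, while the $(1+\Delta|z|)^{-1}$ decay follows by separating $\Delta|z|\leq 1$ from $\Delta|z|>1$; near the apparent poles $z=\pm i\beta$ of the prefactor, entirety of $m^\pm_{\beta,\Delta}$ forces the numerator to vanish there, cancelling the singularity. I expect the main technical obstacle to be the bookkeeping in part (vii): tracking uniform constants across the several regimes of $|z|$, $\beta\Delta$, and proximity to the poles, although each individual estimate is elementary.
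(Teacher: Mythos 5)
Your overall route is the same as the paper's: write $h_{\beta}$ as a Gaussian superposition with the finite measure $\d\nu_{\beta}(\lambda)=\pi\beta\,e^{-\pi\lambda\beta^2}\,\d\lambda$, invoke the Gaussian subordination theorem of \cite{CLV} (after rescaling to $H_{\beta,\Delta}(x)=h_{\beta}(x/\Delta)$ to reduce to exponential type $2\pi$), and read off (i)--(iii), evenness, and uniqueness. For the closed forms, the paper does not guess and verify: it takes the interpolation series for $M^{\pm}_{\beta,\Delta}$ supplied by \cite{CLV}, applies Vaaler's explicit formula for the Fourier transform of such interpolants together with Poisson summation for $H_{\beta,\Delta}$ and $H'_{\beta,\Delta}$, and then recovers \eqref{Explicit_expression_maj_min_Poisson} by Fourier inversion. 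Your first suggestion (pushing the theta-function representation through the measure) amounts to the same computation; your guess-and-verify alternative would also work but then you must actually check the sandwiching and the interpolation conditions before uniqueness can identify the candidates, which is not shorter. Your derivation of (iv) from (v) at $\xi=0$ and your proof of nonnegativity of $m^{-}_{\beta,\Delta}$ are correct.

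The one place your sketch would fail as written is the real-line bound in (vii). Bounding the bracketed factor in \eqref{Explicit_expression_maj_min_Poisson} by $O\big((\beta\Delta)^{-2}\big)$ and multiplying by $\beta/(\beta^2+x^2)$ gives, for $|x|\lesssim\beta$, only $O\big(\beta^{-3}\Delta^{-2}\big)$, which is not $O\big(1/(\beta(1+x^2))\big)$ when $\beta\Delta$ is small (e.g. $\Delta=1$, $\beta\to0$); splitting at $|x|=\beta$ does not repair this. You must retain the factor $\sin^2(\pi\Delta x)$ and use $\sin^2(\pi\Delta x)\le(\pi\Delta x)^2$ together with $e^{\pi\beta\Delta}-e^{-\pi\beta\Delta}\ge 2\pi\beta\Delta$, so that the extra term is at most $x^2/\big(\beta(\beta^2+x^2)\big)\le 1/\beta$ for $|x|\le 1$, reverting to the crude bound only for $|x|\ge1$; this is exactly the paper's estimate \eqref{Est_2_maj_real_line}. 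The complex bounds are fine in outline provided you factor $\beta^2+z^2=(z-i\beta)(z+i\beta)$ and pair each factor with a $\sin\big(\pi\Delta(z\mp i\beta)\big)$ extracted from the numerator, as in \eqref{rew_complex_Poisson}; the distinction between the $1/\beta$ in \eqref{Complex_est_maj} and the $\beta$ in \eqref{Complex_est_min} then comes from the size of $\beta\Delta\,e^{\pi\beta\Delta}/\big(e^{\pi\beta\Delta}\mp e^{-\pi\beta\Delta}\big)$, which your bookkeeping should track explicitly.
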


\begin{proof} We start by observing that 
$$h_{\beta}(x) =  \int_0^{\infty} e^{-\pi \lambda x^2} \d\nu_{\beta}(\lambda)\,,$$
where $\nu_{\beta}$ is the finite nonnegative measure given by $\d\nu_{\beta}(\lambda)= \pi\beta  \, e^{-\pi \lambda \beta^2} \,\d\lambda$. Let us define the auxiliary function
$$H_{\beta, \Delta}(x) = h_{\beta}\left(\frac{x}{\Delta}\right) = \frac{\beta \Delta^2 }{\beta^2\Delta^2 + x^2} = \int_0^{\infty} e^{-\pi \lambda x^2} \d\nu_{\beta,\Delta}(\lambda)\,,$$
where $\nu_{\beta,\Delta}$ is the finite nonnegative measure given by $\d\nu_{\beta,\Delta}(\lambda)= \pi\beta\Delta^2  \, e^{-\pi \lambda\beta^2 \Delta^2} \,\d\lambda.$

\smallskip

From \cite[Section 11]{CLV} we know that there is a unique extremal majorant $M_{\beta, \Delta}^{+}(z)$ of exponential type $2\pi$ and a unique extremal minorant $M_{\beta, \Delta}^{-}(z)$ of exponential type $2\pi$ for the real-valued function $H_{\beta, \Delta}$, and these are given by
\begin{align}\label{Lem6_expl_maj_Poisson}
M^{+}_{\beta,\Delta}(z)=\bigg(\dfrac{\sin\pi z}{\pi}\bigg)^2\left\{\displaystyle\sum_{n=-\infty}^{\infty}\dfrac{H_{\beta,\Delta}(n)}{(z-n)^2}+\displaystyle\sum_{n\neq 0}\dfrac{H^{'}_{\beta,\Delta}(n)}{(z-n)}\right\}
\end{align}
and
\begin{align}\label{Lem6_expl_min_Poisson}
M^{-}_{\beta,\Delta}(z)=\bigg(\dfrac{\cos\pi z}{\pi}\bigg)^2 \left\{\displaystyle\sum_{n=-\infty}^{\infty}\dfrac{H_{\beta,\Delta}\big(n-\frac{1}{2}\big)}{\big(z-n+\frac{1}{2}\big)^2}+\dfrac{H^{'}_{\beta,\Delta}\big(n-\frac{1}{2}\big)}{\big(z-n+\frac{1}{2}\big)}\right\}.
\end{align}
We now set
\begin{equation*}
m_{\beta, \Delta}^+(z) := M_{\beta, \Delta}^+(\Delta z) \,\,\,\,\,\, {\rm and} \,\,\,\,\,\,m_{\beta, \Delta}^-(z) := M_{\beta, \Delta}^-(\Delta z) ,
\end{equation*}
and a simple change of variables shows that these will be the unique extremal functions of exponential type $2\pi \Delta$ for $h_{\beta}$, as described in (i), (ii) and (iii). From \eqref{Lem6_expl_maj_Poisson} and \eqref{Lem6_expl_min_Poisson} it is clear that $M^{\pm}_{\beta,\Delta}$, and hence $m^{\pm}_{\beta,\Delta}$, are even functions.

\smallskip

We now verify properties (iv) - (vii).

\subsubsection*{Property {\rm(iv)}} Since $M_{\beta, \Delta}^{\pm}$ are entire functions of exponential type $2\pi$ whose restrictions to $\R$ belong to $L^1(\R)$, a classical result of Plancherel and P\'{o}lya \cite{PP} (see also \cite[Eq. (3.1) and (3.2)]{V}) guarantees that $M_{\beta, \Delta}^{\pm}$ are bounded on the real line and hence belong to $L^2(\R)$ as well. Moreover, still by \cite{PP}, their derivatives $(M_{\beta, \Delta}^{\pm})'$ are also entire functions of exponential type $2\pi$ whose restrictions to $\R$ belong to $L^1(\R) \cap L^2(\R)$. In particular, $M_{\beta, \Delta}^{\pm}$ are integrable and of bounded variation on $\R$, and thus the Poisson summation formula holds pointwise. This can be used to calculate the values of the integrals of $M_{\beta, \Delta}^{\pm}$. Using the fact that $\widehat{M}_{\beta, \Delta}^{\pm}$ are supported in the interval $[-1,1]$ (which follows from the Paley-Wiener theorem) and the fact that $M_{\beta, \Delta}^{+}$ interpolates the values of $H_{\beta,\Delta}$ at $\Z$ (resp. $M_{\beta, \Delta}^{-}$ interpolates the values of $H_{\beta,\Delta}$ at $\Z+ \hh$) we find
\begin{align*}
\widehat{M}_{\beta, \Delta}^{+}(0) & = \sum_{n = -\infty}^{\infty} M_{\beta, \Delta}^{+}(n) = \sum_{n = -\infty}^{\infty} H_{\beta,\Delta}(n) = \sum_{k = -\infty}^{\infty} \widehat{H}_{\beta,\Delta}(k) = \sum_{k = -\infty}^{\infty} \pi \Delta e^{-2\pi \beta \Delta |k|} = \pi \Delta \left(\frac{ 1 + e^{-2\pi\beta\Delta }}{1 - e^{-2\pi\beta \Delta}}\right)
\end{align*}
and
\begin{align*}
\begin{split}
\widehat{M}_{\beta, \Delta}^{-}(0)  = \sum_{n = -\infty}^{\infty} M_{\beta, \Delta}^{-}(n + \hh) & = \sum_{n = -\infty}^{\infty} H_{\beta,\Delta}(n + \hh) \\
& = \sum_{k = -\infty}^{\infty} (-1)^k \,\widehat{H}_{\beta,\Delta}(k) = \sum_{k = -\infty}^{\infty} (-1)^k\,\pi \Delta e^{-2\pi \beta \Delta |k|} = \pi \Delta \left(\frac{ 1 - e^{-2\pi\beta\Delta }}{1 + e^{-2\pi\beta\Delta}}\right).
\end{split}
\end{align*}
The relation $\widehat{m}_{\beta, \Delta}^{\pm}(0) = \frac{1}{\Delta} \widehat{M}_{\beta, \Delta}^{\pm}(0)$ and the fact that $\widehat{h}_{\beta}(0) = \int_{-\infty}^{\infty}h_{\beta}(x) \,\dx = \pi$ lead us directly to \eqref{Poisson_int1} and \eqref{Poisson_int2}. This establishes property (iv).

\subsubsection*{Property {\rm(v)}} We have already noted that the Fourier transforms $\widehat{M}_{\beta, \Delta}^{\pm}$ are continuous functions (since ${M}_{\beta, \Delta}^{\pm} \in L^1(\R)$) supported in the interval $[-1,1]$. From a classical result of Vaaler \cite[Theorem 9]{V} one has the explicit expression for the Fourier transform of the majorant, in which we use the fact that $M_{\beta, \Delta}^{+}(n) = H_{\beta,\Delta}(n)$ and $(M^{+}_{\beta, \Delta})'(n) = H'_{\beta,\Delta}(n)$ for all $n \in \Z$, 
\begin{align}\label{FT_maj_poisson}
\begin{split}
\widehat{M}_{\beta, \Delta}^{+}(\xi) &= \sum_{n=-\infty}^{\infty} \left( \left(1 - |\xi| \right) M_{\beta, \Delta}^{+}(n) + \frac{1}{2\pi i} \,\sgn(\xi) \,(M^{+}_{\beta, \Delta})'(n)\right) e^{-2\pi i n \xi}
\\
& = \sum_{n=-\infty}^{\infty} \left( \left(1 - |\xi| \right) H_{\beta,\Delta}(n) + \frac{1}{2\pi i} \,\sgn(\xi) \,H'_{\beta,\Delta}(n)\right) e^{-2\pi i n \xi}
\end{split}
\end{align}
for $\xi \in [-1,1]$. Using the Poisson summation formula we have 
\begin{align}\label{PSF_Poisson_eq1}
\begin{split}
\sum_{n=-\infty}^{\infty} H_{\beta,\Delta}(n) \,e^{-2\pi i n \xi}  & = \sum_{k=-\infty}^{\infty} \widehat{H}_{\beta,\Delta}(\xi + k) \\
& = \sum_{k=-\infty}^{\infty} \pi \Delta \,e^{-2 \pi \beta \Delta |\xi + k|}\\
& = \pi \Delta \left( \frac{e^{-2\pi\beta \Delta |\xi|} + e^{-2\pi\beta \Delta (1 - |\xi|)}}{1 - e^{-2\pi\beta\Delta}} \right)
\end{split}
\end{align}
and
\begin{align}\label{PSF_Poisson_eq2}
\begin{split}
\sum_{n=-\infty}^{\infty}  H'_{\beta,\Delta}(n) & \,e^{-2\pi i n \xi}  = \sum_{k=-\infty}^{\infty} \widehat{H'}_{\beta,\Delta}(\xi + k)\\
&  = \sum_{k=-\infty}^{\infty}2\pi i (\xi + k) \widehat{H}_{\beta,\Delta}(\xi + k)\\
&  = \sum_{k=-\infty}^{\infty}2\pi i (\xi + k) \pi \Delta \,e^{-2 \pi \beta \Delta |\xi + k|}\\
& = 2\pi^2 i \Delta \sgn(\xi) \left(  \frac{|\xi| \left(e^{-2 \pi \beta \Delta |\xi|} + e^{-2 \pi \beta \Delta(1- |\xi|)}\right)}{1 - e^{-2 \pi \beta \Delta}} - \frac{e^{-2\pi \beta \Delta} \left(e^{2 \pi \beta \Delta |\xi|} - e^{-2 \pi \beta \Delta |\xi|}\right)}{\left(1 - e^{-2 \pi \beta \Delta}\right)^2}\right).
\end{split}
\end{align}
Plugging \eqref{PSF_Poisson_eq1} and \eqref{PSF_Poisson_eq2} into \eqref{FT_maj_poisson} gives us
\begin{align*}
\widehat{M}_{\beta, \Delta}^{+}(\xi) =  \pi \Delta \left(\dfrac{e^{2\pi\beta\Delta(1 - |\xi|)}-e^{-2\pi\beta\Delta(1-|\xi|)}}{\left(e^{\pi\beta\Delta} - e^{-\pi\beta\Delta}\right)^2}\right),
\end{align*}
and from the fact that 
\begin{equation}\label{change_var_FT}
\widehat{m}_{\beta, \Delta}^{\pm}(\xi) = \frac{1}{\Delta} \widehat{M}_{\beta, \Delta}^{\pm}\Big(\frac{\xi}{\Delta}\Big)
\end{equation}
we arrive at \eqref{FT_maj_min_Poisson} for the majorant.

\smallskip

For the minorant we proceed analogously. From \cite[Theorem 9]{V} one has the representation, in which we use the fact that $M_{\beta, \Delta}^{-}(n+\hh) = H_{\beta,\Delta}(n+\hh)$ and $(M^{-}_{\beta, \Delta})'(n+\hh) = H'_{\beta,\Delta}(n+\hh)$ for all $n \in \Z$,
\begin{align}\label{FT_min_poisson}
\begin{split}
\widehat{M}_{\beta, \Delta}^{-}(\xi) & = \sum_{n=-\infty}^{\infty} \left( \left(1 - |\xi| \right) M_{\beta, \Delta}^{-}(n+\hh) + \frac{1}{2\pi i} \,\sgn(\xi) \,(M^{-}_{\beta, \Delta})'(n+\hh)\right) e^{-2\pi i (n+\frac12) \xi}
\\& = \sum_{n=-\infty}^{\infty} \left( \left(1 - |\xi| \right) H_{\beta,\Delta}(n + \hh) + \frac{1}{2\pi i} \,\sgn(\xi) \,H'_{\beta,\Delta}(n + \hh)\right) e^{-2\pi i (n+\frac12) \xi}
\end{split}
\end{align}
for $\xi \in [-1,1]$. Poisson summation now yields
\begin{align}\label{PSF_Poisson_eq3}
\begin{split}
\sum_{n=-\infty}^{\infty} H_{\beta,\Delta}(n+\hh) \,e^{-2\pi i (n+\frac12) \xi}  & = \sum_{k=-\infty}^{\infty} (-1)^k \,\widehat{H}_{\beta,\Delta}(\xi + k)  = \pi \Delta \left( \frac{e^{-2\pi\beta \Delta |\xi|} - e^{-2\pi\beta \Delta (1 - |\xi|)}}{1 + e^{-2\pi\beta\Delta}} \right)
\end{split}
\end{align}
and
\begin{align}\label{PSF_Poisson_eq4}
\begin{split}
\sum_{n=-\infty}^{\infty} & H'_{\beta,\Delta}(n+\hh) \,e^{-2\pi i (n+\frac12) \xi}   = \sum_{k=-\infty}^{\infty}2\pi i \,(\xi + k)\,(-1)^k\, \widehat{H}_{\beta,\Delta}(\xi + k)  \\
& =  2\pi^2 i \Delta \sgn(\xi) \left(  \frac{|\xi| \left(e^{-2 \pi \beta \Delta |\xi|} - e^{-2 \pi \beta \Delta(1- |\xi|)}\right)}{1 + e^{-2 \pi \beta \Delta}} + \frac{e^{-2\pi \beta \Delta} \left(e^{2 \pi \beta \Delta |\xi|} - e^{-2 \pi \beta \Delta |\xi|}\right)}{\left(1 + e^{-2 \pi \beta \Delta}\right)^2}\right).
\end{split}
\end{align}
Plugging \eqref{PSF_Poisson_eq3} and \eqref{PSF_Poisson_eq4} into \eqref{FT_min_poisson} gives us
\begin{align*}
\widehat{M}_{\beta, \Delta}^{-}(\xi) =  \pi \Delta \left(\dfrac{e^{2\pi\beta\Delta(1 - |\xi|)}-e^{-2\pi\beta\Delta(1-|\xi|)}}{\left(e^{\pi\beta\Delta} + e^{-\pi\beta\Delta}\right)^2}\right),
\end{align*}
and using \eqref{change_var_FT} we arrive at \eqref{FT_maj_min_Poisson} for the minorant. This completes the proof of (v).

\subsubsection*{Property {\rm(vi)}} The proof of (vi) is a direct computation using (v) and Fourier inversion
\begin{align*}
m_{\beta,\Delta}^{\pm}(z)& = \int_{-\Delta}^{\Delta}  \pi \left(\dfrac{e^{2\pi\beta(\Delta - |\xi|)}-e^{-2\pi\beta(\Delta-|\xi|)}}{\left(e^{\pi\beta\Delta}\mp e^{-\pi\beta\Delta}\right)^2}\right) e^{2\pi i \xi z}\,\d\xi.
\end{align*}
We omit the details of this calculation.

\subsubsection*{Property {\rm(vii)}} From \eqref{Explicit_expression_maj_min_Poisson} it follows directly that $0 < m_{\beta,\Delta}^{-}(x)$ for all $x \in \R$. We may also write
\begin{equation}\label{Est_0_maj_real_line}
m_{\beta,\Delta}^{+}(x) = \frac{\beta}{\beta^2 + x^2} \left(1 + \frac{4\sin^2(\pi \Delta x)}{\left(e^{\pi\beta\Delta}- e^{-\pi\beta\Delta}\right)^2}\right). 
\end{equation}
We then note that in the range $0 < \beta \leq \hh$ and $\Delta \geq 1$ the following estimates hold:
\begin{equation}\label{Est_1_maj_real_line}
\frac{\beta}{\beta^2 + x^2} \ll \frac{1}{\beta(1 + x^2)}
\end{equation}
and
\begin{align}\label{Est_2_maj_real_line}
\begin{split}
\left(\frac{\beta}{\beta^2 + x^2}\right)\frac{\sin^2(\pi \Delta x)}{\left(e^{\pi\beta\Delta}- e^{-\pi\beta\Delta}\right)^2} & = \left(\frac{\beta}{\beta^2 + x^2}\right) \left(\frac{\sin(\pi \Delta x)}{\Delta x}\right)^2\left( \frac{\Delta x }{\beta \Delta}\right)^2\left(\frac{\beta\Delta} {e^{\pi\beta\Delta}- e^{-\pi\beta\Delta}}\right)^2 \\
& \ll \left(\frac{\beta}{\beta^2 + x^2}\right)\left(\frac{1}{1 + \Delta^2x^2}\right)\left( \frac{ x }{\beta}\right)^2 \\
& \ll \frac{1}{\beta(1 + x^2)}.
\end{split}
\end{align}
Using \eqref{Est_1_maj_real_line} and \eqref{Est_2_maj_real_line} in \eqref{Est_0_maj_real_line}  yields the estimate
$$m_{\beta,\Delta}^{+}(x) \ll \frac{1}{\beta(1 + x^2)}.$$

The idea to analyze the growth in the complex plane is similar. We start by rewriting \eqref{Explicit_expression_maj_min_Poisson} as 
\begin{equation}\label{rew_complex_Poisson}
m_{\beta,\Delta}^{\pm}(z) = \frac{4}{\beta} \left(\frac{\sin\pi \Delta (z + i \beta)}{\Delta (z + i\beta)} \right)\left(\frac{\sin\pi \Delta (z - i \beta)}{\Delta (z - i\beta)} \right) \left( \frac{\beta\Delta}{e^{\pi\beta\Delta}\mp e^{-\pi\beta\Delta}}\right)^2
\end{equation}
and then apply the following uniform bounds
\begin{align}\label{Unif_bound_1_Poisson}
\left|\frac{\sin w}{ w}\right| \ll \frac{e^{|\im w|}}{1 + |w|}
\end{align}
and
\begin{equation}\label{Unif_bound_2_Poisson}
\frac{1}{(1 + |w + i \gamma|)}\cdot  \frac{1}{(1 + |w - i \gamma|)} \ll  \frac{1}{1 + |w|}
\end{equation}
that are valid for any $w \in \C$ and $\gamma >0$. Using \eqref{Unif_bound_1_Poisson} and \eqref{Unif_bound_2_Poisson} in \eqref{rew_complex_Poisson} we derive that
\begin{align*}
\big|m_{\beta,\Delta}^{\pm}(z)\big| & \ll \frac{1}{\beta} \left(\frac{e^{\pi \Delta (|\im z| + \beta) }}{1 + \Delta|z +i\beta|}\right)\left(\frac{e^{\pi \Delta (|\im z| + \beta) }}{1 + \Delta|z -i\beta|}\right)\left( \frac{\beta\Delta}{e^{\pi\beta\Delta}\mp e^{-\pi\beta\Delta}}\right)^2\\
& \ll\frac{1}{\beta}\left(\frac{e^{2\pi \Delta |\im z| }}{1 + \Delta|z|}\right)\left( \frac{\beta\Delta\,e^{\pi \beta \Delta}}{e^{\pi\beta\Delta}\mp e^{-\pi\beta\Delta}}\right)^2.
\end{align*}
In the majorant case, we have 
$$\left( \frac{\beta\Delta\,e^{\pi \beta \Delta}}{e^{\pi\beta\Delta}- e^{-\pi\beta\Delta}}\right)^2 \ll 1 + (\beta \Delta)^2 \ll \Delta ^2\,,$$
and this leads to \eqref{Complex_est_maj}. In the minorant case we have 
$$\left( \frac{\beta\Delta\,e^{\pi \beta \Delta}}{e^{\pi\beta\Delta}+ e^{-\pi\beta\Delta}}\right)^2 \ll  (\beta \Delta)^2\,, $$
and this leads to \eqref{Complex_est_min}. This concludes the proof of the lemma.
\end{proof}

\subsection{Approximations to the functions $f_{2m+1,\alpha}$} Our next task is to present the analogue of Lemma \ref{lemma_extr_Poisson} (i.e. the solution of the Beurling-Selberg extremal problem) for the family of even functions $f_{2m+1,\alpha}$ defined in \eqref{Def_f_2m+1}. We highlight the explicit description of the Fourier transforms of the extremal bandlimited approximations. This is a slightly technical but extremely important part of this work, since these Fourier transforms will play an important role in the evaluation of the sum over prime powers in the explicit formula.

\begin{lemma}[Extremal functions for $f_{2m+1,\alpha}$]\label{lema_extremal_f_2m+1_alpha} Let $m \geq0$ be an integer and let $\hh \leq \alpha \leq 1$ and $\Delta\geq1$ be real parameters. Let $f_{2m+1,\alpha}$ be the real-valued function defined in \eqref{Def_f_2m+1}, namely
	$$f_{2m+1,\alpha}(x)=\frac{1}{2}\int_{\alpha}^{3/2}{\left(\sigma-\alpha\right)^{2m}\log\left(\dfrac{1+x^2}{(\sigma-\hh)^2+x^2}\right)} \,\d\sigma.$$
	Then there is a unique pair of real entire functions $g_{2m+1,\alpha,\Delta}^{-}:\mathbb{C}\to\mathbb{C}$ and $g_{2m+1,\alpha,\Delta}^{+}:\mathbb{C}\to\mathbb{C}$ satisfying the following properties:
	\begin{itemize}
	\item[(i)] The real entire functions $g_{2m+1,\alpha,\Delta}^{\pm}$ have exponential type $2\pi\Delta$.
	
	\smallskip
	
	\item[(ii)] The inequality 
	$$g_{2m+1,\alpha,\Delta}^{-}(x) \leq f_{2m+1,\alpha}(x) \leq g_{2m+1,\alpha,\Delta}^{+}(x)$$
holds pointwise for all $x \in \R$.

\smallskip

	\item[(iii)] Subject to conditions {\rm(i)} and {\rm (ii)}, the value of the integral 
$$ \int_{-\infty}^{\infty}\big\{g_{2m+1,\alpha,\Delta}^{+}(x)-g_{2m+1,\alpha,\Delta}^{-}(x)\big\}\,\dx$$
is minimized.
\end{itemize}

\smallskip

\noindent The functions $g_{2m+1,\alpha,\Delta}^{\pm}$ are even and verify the following additional properties:

\smallskip

	\begin{itemize}
		\item[(iv)] For any real number $x$ we have
		\begin{align}\label{bound_g_2m+1_real}
		\big|g_{2m+1,\alpha,\Delta}^{\pm}(x)\big|\ll_m \dfrac{1}{1+x^2}\,,
		\end{align}
		and, for any complex number $z=x+iy$, we have
		\begin{align}\label{bound_g_2m+1_complex}
		\big|g_{2m+1,\alpha,\Delta}^{\pm}(z)\big|\ll_m\dfrac{\Delta^{2}e^{2\pi\Delta|y|}}{(1+\Delta|z|)},
		\end{align}
where the constants implied by the $\ll_m$ notation depend only on $m$.

		\smallskip
		
		\item[(v)] The Fourier transforms of $g_{2m+1,\alpha,\Delta}^{\pm}$, namely
		\[
		\widehat{g}_{2m+1,\alpha,\Delta}^{\pm}(\xi)=\int_{-\infty}^{\infty}g_{2m+1,\alpha,\Delta}^{\pm}(x)\,e^{-2\pi ix\xi}\,\dx,
		\]
		are continuous functions supported on the interval $[-\Delta,\Delta]$ and satisfy
		\begin{align}\label{Feb23_3:4pm}
		\big|\widehat{g}^{\pm}_{2m+1,\alpha,\Delta}(\xi)\big|\ll_m 1.
		\end{align}
		
		\smallskip
		
		\item[(vi)] The $L^1-$distances of $g_{2m+1,\alpha,\Delta}^{\pm}$ to $f_{2m+1,\alpha}$ are explicitly given by
		\begin{equation}\label{Lem8_EV_int_maj}
		\int_{-\infty}^{\infty}\big\{g^{+}_{2m+1,\alpha,\Delta}(x)-f_{2m+1,\alpha}(x)\big\}\,\dx=
		-\dfrac{1}{\Delta}\int_{\alpha}^{3/2}\left(\sigma-\alpha\right)^{2m}\,\log\left(\dfrac{1-e^{-2\pi(\sigma-\frac12)\Delta}}{1-e^{-2\pi\Delta}}\right)\d\sigma,
		\end{equation}
		and
		\begin{equation}\label{Lem8_EV_int_min}
		\int_{-\infty}^{\infty}\big\{f_{2m+1,\alpha}(x)-g^{-}_{2m+1,\alpha,\Delta}(x)\big\}\,\dx=\dfrac{1}{\Delta}\int_{\alpha}^{3/2}\left(\sigma-\alpha\right)^{2m}\,\log\left(\dfrac{1+e^{-2\pi(\sigma-\frac12)\Delta}}{1+e^{-2\pi\Delta}}\right)\d\sigma.
		\end{equation}
		
		\smallskip
		
		\item[(vii)] At $\xi=0$ we have
		\begin{align}\label{Lem8_FTg+_zero}
		\widehat{g}^{\pm}_{2m+1,\alpha,\Delta}(0)=\dfrac{\pi \left(\tfrac{3}{2}-\alpha\right)^{2m+2}}{(2m+1)(2m+2)}-\dfrac{1}{\Delta}\int_{\alpha}^{3/2}(\sigma-\alpha)^{2m}\,\log\left(\dfrac{1\mp e^{-2\pi(\sigma-\frac12)\Delta}}{1\mp e^{-2\pi\Delta}}\right)\d\sigma.
		\end{align}
		
		\smallskip
		
\item[(viii)]	The Fourier transforms $\widehat{g}^{\pm}_{2m+1,\alpha,\Delta}$ are even functions and, for $0 < \xi < \Delta$, we have the explicit expressions	
		\begin{align}\label{FT_maj_general_case}
		\begin{split}
		  &\widehat{g}^{\pm}_{2m+1,\alpha,\Delta}(\xi)  =\\
		 &   \ \ \ \dfrac{1}{2}\sum_{k=-\infty}^{\infty}(\pm 1)^k \left[\frac{k+1}{|\xi+k\Delta|}\Bigg(\frac{(2m)!\,e^{-2\pi |\xi+k\Delta|(\alpha-\frac12)}}{(2\pi|\xi+k\Delta|)^{2m+1}}- \!\!\sum_{j=0}^{2m+1}\frac{\gamma_{j}\,e^{-2\pi|\xi+k\Delta|}}{(2\pi |\xi+k\Delta|)^{j}}\left(\tfrac{3}{2}-\alpha\right)^{2m+1-j}\Bigg)\right]\,,
		 \end{split}
		\end{align}
		\smallskip
		where $\gamma_{j}=\frac{(2m)!}{(2m+1-j)!}$, for $0\leq j \leq 2m+1$. 
	\end{itemize}
\end{lemma}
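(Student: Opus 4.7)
The plan is to reduce the problem to the already-solved extremal problem for the Poisson kernel (Lemma \ref{lemma_extr_Poisson}), by representing $f_{2m+1,\alpha}$ as a nonnegative superposition of Poisson kernels. From the identity $\log\!\bigl((1+x^2)/(a^2+x^2)\bigr)=2\int_a^{1} t/(t^2+x^2)\,\d t$ valid for $0<a\leq 1$, one immediately obtains
\begin{equation*}
f_{2m+1,\alpha}(x)=\int_{\alpha}^{3/2}(\sigma-\alpha)^{2m}\!\!\int_{\sigma-1/2}^{1} h_{t}(x)\,\d t\,\d\sigma,
\end{equation*}
where $h_{t}(x)=t/(t^2+x^2)$ is the Poisson kernel of parameter $t$. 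Using $h_{t}(x)=\pi t\int_{0}^{\infty}e^{-\pi\lambda t^{2}}e^{-\pi\lambda x^{2}}\,\d\lambda$ (as recorded in the proof of Lemma \ref{lemma_extr_Poisson}), this exhibits $f_{2m+1,\alpha}$ in the Gaussian-subordination form $\int_{0}^{\infty}e^{-\pi\lambda x^{2}}\,\d\nu(\lambda)$ with a nonnegative measure $\nu$, placing it squarely within the framework of \cite{CLV}.

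Guided by this, I would define
\begin{equation*}
g^{\pm}_{2m+1,\alpha,\Delta}(z):=\int_{\alpha}^{3/2}(\sigma-\alpha)^{2m}\!\!\int_{\sigma-1/2}^{1}m^{\pm}_{t,\Delta}(z)\,\d t\,\d\sigma
\end{equation*}
using the Poisson extremals from Lemma \ref{lemma_extr_Poisson}. Property (i) will follow since the complex bounds $|m^{+}_{t,\Delta}(z)|\ll \Delta^{2}e^{2\pi\Delta|y|}/(t(1+\Delta|z|))$ and $|m^{-}_{t,\Delta}(z)|\ll t\Delta^{2}e^{2\pi\Delta|y|}/(1+\Delta|z|)$ are integrable against $(\sigma-\alpha)^{2m}\,\d t\,\d\sigma$ (the $1/t$ singularity produces at worst a logarithmic, and thus integrable, factor); (ii) is immediate from $m^{-}_{t,\Delta}\leq h_{t}\leq m^{+}_{t,\Delta}$; and the uniqueness and $L^{1}$-minimality (iii) come from the Gaussian subordination theory of \cite{CLV}, which asserts that the unique extremal pair for a nonnegative subordinated function is obtained by integrating the Gaussian-level extremals against the subordinating measure (our construction realizes precisely this, lifted to the Poisson level). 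Properties (iv) and (v) will then follow from the pointwise bounds of Lemma \ref{lemma_extr_Poisson}(vii) integrated against the measure, combined with the Paley-Wiener theorem and the trivial estimate $|\widehat{g}^{\pm}_{2m+1,\alpha,\Delta}(\xi)|\leq \|g^{\pm}_{2m+1,\alpha,\Delta}\|_{L^{1}}\ll_{m}1$. Property (vi) will follow by integrating \eqref{Poisson_int1}-\eqref{Poisson_int2} against the measure and computing the resulting $t$-integral via the substitution $u=1\mp e^{-2\pi t\Delta}$; and (vii) by adding to (vi) the direct evaluation $\widehat{f}_{2m+1,\alpha}(0)=\pi\int_{\alpha}^{3/2}(\sigma-\alpha)^{2m}(\tfrac{3}{2}-\sigma)\,\d\sigma=\pi(\tfrac{3}{2}-\alpha)^{2m+2}/[(2m+1)(2m+2)]$.

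The bulk of the technical work will be (viii). Starting from \eqref{FT_maj_min_Poisson} I would rewrite $\widehat{m}^{\pm}_{t,\Delta}(\xi)=\pi e^{-2\pi t|\xi|}\bigl(1-e^{-4\pi t(\Delta-|\xi|)}\bigr)/(1\mp e^{-2\pi t\Delta})^{2}$, expand the denominator as the signed geometric series $\sum_{n\geq 0}(n+1)(\pm 1)^{n}e^{-2\pi t n\Delta}$, and regroup the two halves (assuming $0<\xi<\Delta$) into the compact identity
\begin{equation*}
\widehat{m}^{\pm}_{t,\Delta}(\xi)=\pi\sum_{k\in\Z}(\pm 1)^{k}(k+1)\,e^{-2\pi t|\xi+k\Delta|},
\end{equation*}
in which the $k=-1$ term vanishes. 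Integrating in $t\in[\sigma-\tfrac12,1]$ yields for each $k$ a factor $(e^{-2\pi(\sigma-1/2)|\xi+k\Delta|}-e^{-2\pi|\xi+k\Delta|})/(2\pi|\xi+k\Delta|)$, and then integration in $\sigma\in[\alpha,\tfrac{3}{2}]$ reduces to evaluating $\int_{0}^{3/2-\alpha}u^{2m}e^{-Au}\,\d u$ with $A=2\pi|\xi+k\Delta|$. Iterated integration by parts gives $\int_{0}^{L}u^{2m}e^{-Au}\,\d u=(2m)!/A^{2m+1}-e^{-AL}\sum_{j=0}^{2m}(2m)!L^{j}/(j!\,A^{2m+1-j})$ with $L=\tfrac{3}{2}-\alpha$. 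Combining this with the contribution $-\int_{\alpha}^{3/2}(\sigma-\alpha)^{2m}e^{-A}\,\d\sigma=-e^{-A}L^{2m+1}/(2m+1)$, reindexed as the $j=0$ piece of a finite sum running from $0$ to $2m+1$, yields exactly the bracketed expression in (viii), with combinatorial coefficients $\gamma_{j}=(2m)!/(2m+1-j)!$.

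The main obstacle will be the combinatorial bookkeeping in (viii): one must carry out a bi-infinite reindexing of the geometric expansion of $\widehat{m}^{\pm}_{t,\Delta}$ in tandem with the iterated integration by parts on the $\sigma$-integral, while correctly assembling the finite inner sum with the coefficients $\gamma_{j}$. The interchange of summation with the $(t,\sigma)$-integrations must be justified by absolute convergence away from the degeneracies $\xi+k\Delta=0$, where the individually singular terms $1/|\xi+k\Delta|^{j}$ only cancel after being grouped inside the bracket.
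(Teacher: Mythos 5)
Your construction is correct, and it reaches the same extremal functions as the paper by a genuinely different route. The paper works directly at the Gaussian level: it defines the subordinating measure $\nu_\Delta$ explicitly, invokes the interpolation formulas of \cite{CLV} at $\Z$ and $\Z+\tfrac12$ to define $G^{\pm}_{\Delta}$, and then derives properties (v)--(viii) from scratch via Vaaler's Theorem 9 and Poisson summation applied to the explicit transform pairs for $\log\frac{x^2+b^2}{x^2+a^2}$ and its derivative. You instead write $f_{2m+1,\alpha}$ as a nonnegative superposition of Poisson kernels and integrate the already-solved Poisson extremals of Lemma \ref{lemma_extr_Poisson} against the superposition measure. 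Since $m^{\pm}_{t,\Delta}$ is itself the integral of the Gaussian-level extremals against $\pi t e^{-\pi\lambda t^2}\,\d\lambda$, Tonelli identifies your $g^{\pm}$ with the CLV interpolation series for the pushed-forward measure (which is exactly the paper's $\nu_\Delta$ after rescaling), so extremality and uniqueness do transfer; alternatively, your $g^{\pm}$ are integrable one-sided approximations of type $2\pi\Delta$ matching the interpolation data of $f_{2m+1,\alpha}$ at $\Delta^{-1}\Z$ resp.\ $\Delta^{-1}(\Z+\tfrac12)$, which pins them down. Your route buys a cleaner derivation of (v)--(viii): the closed form \eqref{FT_maj_min_Poisson} does all the Poisson-summation work once, the identity $\widehat{m}^{\pm}_{t,\Delta}(\xi)=\pi\sum_{k}(\pm1)^k(k+1)e^{-2\pi t|\xi+k\Delta|}$ is correct for $0<\xi<\Delta$, and the subsequent $(t,\sigma)$-integrations reproduce \eqref{FT_maj_general_case} with the stated $\gamma_j$ (I checked the reindexing); likewise (vi) falls out of \eqref{Poisson_int1}--\eqref{Poisson_int2} by your substitution, and (vii) from the value of $\widehat{f}_{2m+1,\alpha}(0)$.

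One small correction to your final remark: for $0<\xi<\Delta$ there are no degeneracies $\xi+k\Delta=0$, so that is not where the interchange of sum and integral needs care. The genuinely delicate point is the endpoint $\sigma\to\tfrac12$ in the case $\alpha=\tfrac12$, $m=0$: there $t\to 0$, the termwise majorant $\sum_{k}(|k|+1)e^{-2\pi t|\xi+k\Delta|}/|\xi+k\Delta|$ produces a nonintegrable $1/(\sigma-\tfrac12)$ singularity, and absolute convergence of the triple sum--integral fails, so the interchange cannot be done term by term. This is repaired exactly as in the paper's own proof, by grouping the term $k$ with its partner of opposite sign before integrating (in your parametrization, $k$ with $-k-2$), or by a limiting argument from $\alpha>\tfrac12$; it is a fixable technicality rather than a gap.
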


\begin{proof} Fix $m \geq0 $ and $\hh \leq \alpha \leq 1$. For $\Delta\geq 1$ we consider the nonnegative Borel measure $\nu_{\Delta} = \nu_{2m+1, \alpha, \Delta} $ on $(0,\infty)$ given by
$$
\d\nu_{\Delta}(\lambda):=\int_{\alpha}^{3/2}\left(\sigma-\alpha\right)^{2m}\left(\frac{e^{-\pi\lambda(\sigma-\frac12)^{2} \Delta^2}-e^{-\pi\lambda\Delta^2}}{2\lambda}\right)\,\d\sigma\, \d\lambda\,,
$$
and let $F_{\Delta}=F_{2m+1,\alpha,\Delta}$ be the function 
\[
F_\Delta(x):=\int_{0}^{\infty}e^{-\pi\lambda x^2}\,\d\nu_{\Delta}(\lambda).
\]
Recall that 
\[
\dfrac{1}{2}\log\left(\dfrac{x^2+\Delta^2}{x^2+(\sigma-\frac12)^2\Delta^2}\right)=\int_{0}^{\infty}e^{-\pi\lambda x^2}\left(\dfrac{e^{-\pi\lambda(\sigma-\frac12)^{2} \Delta^2}-e^{-\pi\lambda\Delta^2}}{2\lambda}\right)\,\d\lambda.
\]
Multiplying both sides by $(\sigma-\alpha)^{2m}$ and integrating from $\sigma=\alpha$ to $\sigma=\tfrac32$ yields
\begin{align}\label{relation_F_Delta_PSF}
\begin{split}
\dfrac{1}{2} \int_{\alpha}^{3/2} &\left(\sigma-\alpha\right)^{2m}\log\Bigg(\frac{x^2+\Delta^2}{x^2+(\sigma-\frac12)^2\Delta^2}\Bigg)\,\d\sigma \\
& =  \int_{\alpha}^{3/2}\int_{0}^{\infty}\left(\sigma-\alpha\right)^{2m}e^{-\pi\lambda x^2}\Bigg(\dfrac{e^{-\pi\lambda(\sigma-\frac12)^{2} \Delta^2}-e^{-\pi\lambda\Delta^2}}{2\lambda}\Bigg)\,\d\lambda \,\d\sigma \\
& =\int_{0}^{\infty}e^{-\pi\lambda x^2}\int_{\alpha}^{3/2}\left(\sigma-\alpha\right)^{2m}\Bigg(\dfrac{e^{-\pi\lambda(\sigma-\frac12)^{2} \Delta^2}-e^{-\pi\lambda\Delta^2}}{2\lambda}\Bigg)\, \d\sigma \,\d\lambda\\
& =F_{\Delta}(x), 
\end{split}
\end{align}
where the interchange of the integrals is justified since the terms involved are all nonnegative. It follows from \eqref{Def_f_2m+1} that 
\begin{align}\label{relation_f_F}
f_{2m+1,\alpha}(x)=F_{\Delta}(\Delta x).
\end{align}
In particular, this shows that the measure $\nu_{\Delta}$ is finite on $(0,\infty)$ since
\begin{align*}
\int_{0}^{\infty}\d\nu_{\Delta}(\lambda)=F_{\Delta}(0)=f_{2m+1,\alpha}(0). 
\end{align*}

\smallskip

From the general Gaussian subordination framework of \cite[Section 11]{CLV}, there is a unique extremal majorant $G^{+}_{\Delta}(z)=G^{+}_{2m+1,\alpha,\Delta}(z)$ and a unique extremal minorant $G^{-}_{\Delta}(z)=G^{-}_{2m+1,\alpha, \Delta}(z)$ of exponential type $2\pi$ for $F_{\Delta}(x)$, and these functions are given by
\begin{align}\label{Def_G+_general2m+1}
G^{+}_{\Delta}(z)=\bigg(\dfrac{\sin\pi z}{\pi}\bigg)^2\left\{\displaystyle\sum_{n=-\infty}^{\infty}\dfrac{F_{\Delta}(n)}{(z-n)^2}+\displaystyle\sum_{n\neq 0}\dfrac{F^{'}_{\Delta}(n)}{(z-n)}\right\}
\end{align}
and
\begin{align}\label{Def_G-_general2m+1}
G^{-}_{\Delta}(z)=\bigg(\dfrac{\cos\pi z}{\pi}\bigg)^2 \left\{\displaystyle\sum_{n=-\infty}^{\infty}\dfrac{F_{\Delta}\big(n-\frac{1}{2}\big)}{\big(z-n+\frac{1}{2}\big)^2}+\dfrac{F^{'}_{\Delta}\big(n-\frac{1}{2}\big)}{\big(z-n+\frac{1}{2}\big)}\right\}.
\end{align}
Hence, the functions  $g^{+}_{\Delta}(z)=g^{+}_{2m+1,\alpha, \Delta}(z)$ and $g^{-}_{\Delta}(z)=g^{-}_{2m+1,\alpha, \Delta}(z)$ defined by
\begin{align}\label{relation_g_G}
g^{+}_{\Delta}(z):=G^{+}_{\Delta}(\Delta z) \hspace{0.3cm} \mbox{and} \hspace{0.3cm} g^{-}_{\Delta}(z):=G^{-}_{\Delta}(\Delta z) 
\end{align} 
are the unique extremal functions of exponential type $2\pi\Delta$ for $f_{2m+1,\alpha}$, as described in (i), (ii) and (iii). From \eqref{Def_G+_general2m+1} and \eqref{Def_G-_general2m+1} it is clear that $G^{\pm}_{\Delta}$, and hence $g^{\pm}_{\Delta}$, are even functions. 

\smallskip

We now verify the properties (iv) - (viii).

\subsubsection*{Property {\rm(iv)}} For $\alpha = \hh$, the function $f_{2m+1, \frac12}$ was already used in \cite{CChi} in connection to bounds for $S_{2m+1}$ in the critical line and is explicitly given by
\begin{equation*}
f_{2m+1, \frac12}(x)=\dfrac{1}{(2m+1)}\left[(-1)^{m+1}x^{2m+1}\arctan\left(\frac{1}{x}\right) + \sum_{k=0}^{m}\dfrac{(-1)^{m-k}}{2k+1}x^{2m-2k}\right].
\end{equation*}
Note that $f_{2m+1,\frac12}$ and $f'_{2m+1,\frac12}$ are bounded functions with power series representations 
\begin{align*}
f_{2m+1,\frac12}(x)=\frac{1}{2m+1}\sum_{k=1}^{\infty}\dfrac{(-1)^{k-1}}{(2k+2m+1)x^{2k}} 
\end{align*}
and
\begin{align*}
f'_{2m+1,\frac12}(x)=\dfrac{1}{2m+1}\displaystyle\sum_{k=1}^{\infty}\dfrac{(-1)^{k}(2k)}{(2k+2m+1)x^{2k+1}},
\end{align*}
for $|x| >1$. In particular, this implies that 
\begin{align}\label{Est_f_crit_line_12}
\big|f_{2m+1,\frac12}(x)\big| \ll_m \frac{1}{1+x^2}   \ \ \ \ {\rm and} \ \ \ \  \big|f'_{2m+1,\frac12}(x)\big| \ll_m \frac{1}{|x|(1+x^2)}.
\end{align}
On the other hand, directly from the definition \eqref{Def_f_2m+1} we see that 
\begin{equation}\label{comparison_f_12_alpha}
0 \leq f_{2m+1,\alpha}(x) \leq f_{2m+1,\frac12}(x) \ \ \ {\rm and} \ \ \ 0 \leq \big|f'_{2m+1,\alpha}(x)\big| \leq \big|f'_{2m+1,\frac12}(x)\big|
\end{equation}
for all $x \in \R$ and $\hh \leq \alpha \leq 1$. From \eqref{Est_f_crit_line_12} and \eqref{comparison_f_12_alpha} it follows that 
\begin{align*}
\big|f_{2m+1,\alpha}(x)\big| \ll_m \frac{1}{1+x^2}   \ \ \ \ {\rm and} \ \ \ \  \big|f'_{2m+1,\alpha}(x)\big| \ll_m \frac{1}{|x|(1+x^2)}
\end{align*}
(note that the implicit constants do not depend on $\alpha$). It then follows from \eqref{relation_f_F} that (recall the shorthand notation $F_{\Delta}=F_{2m+1,\alpha,\Delta}$)
\begin{align}\label{bounds for big F}
\big|F_{\Delta}(x)\big| \ll_m \frac{\Delta^2}{\Delta^2+x^2}   \ \ \ \ {\rm and} \ \ \ \  \big|F'_{\Delta}(x)\big| \ll_m \frac{\Delta^2}{|x|(\Delta^2+x^2)}.
\end{align}

Expressions \eqref{Def_G+_general2m+1} and \eqref{Def_G-_general2m+1} can be rewritten as
\begin{equation}\label{G+_rewritten}
G^{+}_{\Delta}(z)= \bigg(\dfrac{\sin\pi z }{\pi z }\bigg)^2  F_{\Delta}(0) + \displaystyle\sum_{n\neq 0}\bigg(\dfrac{\sin\pi (z - n)}{\pi (z - n)}\bigg)^2  \left\{F_{\Delta}(n) + (z-n)F^{'}_{\Delta}(n)\right\}
\end{equation}
and
\begin{equation}\label{G-_rewritten}
G^{-}_{\Delta}(z)= \displaystyle\sum_{n=-\infty}^{\infty} \bigg(\dfrac{\sin\pi (z - n + \tfrac12)}{\pi (z - n + \tfrac12)}\bigg)^2  \left\{F_{\Delta}\big(n-\tfrac{1}{2}\big) + (z-n+\tfrac{1}{2}\big)F^{'}_{\Delta}\big(n-\tfrac{1}{2}\big)\right\}.
\end{equation}
We now use \eqref{bounds for big F}, \eqref{G+_rewritten}, \eqref{G-_rewritten} and the uniform bound
\begin{align}\label{unif_bound_sin}
\left|\frac{\sin \pi z}{ \pi z}\right|^2 \ll \frac{e^{2\pi |\im z|}}{1 + |z|^2}
\end{align}
to get
\begin{align*}
\left|G^{\pm}_{\Delta}(z)\right| \ll_m \frac{\Delta^2e^{2 \pi |\im z|}}{1 + |z|}.
\end{align*}
One can break the sums in \eqref{G+_rewritten} and \eqref{G-_rewritten} into the ranges $\{n \leq |z|/2\}$, $\{|z|/2 < n \leq 2|z|\}$ and $\{2|z| < n\}$ to verify this last claim. From \eqref{relation_g_G} we arrive at \eqref{bound_g_2m+1_complex}.

\smallskip

To bound the functions $G^{\pm}_{\Delta}$ on the real line, we explore the fact that $F_{\Delta}$ is an even function (and hence $F_{\Delta}'$ is odd) to group the terms conveniently. For the majorant we group the terms $n$ and $-n$ in \eqref{G+_rewritten} to get
\begin{align}\label{G^+_bound_real}
\begin{split}
G^{+}_{\Delta}(x)  =\bigg(&\dfrac{\sin\pi x}{\pi x}\bigg)^2  F_{\Delta}(0) +  \displaystyle\sum_{n=1}^{\infty} \bigg(\dfrac{\sin^2\pi (x - n)}{\pi^2 (x^2 - n^2)^2}\bigg) \Big\{(2x^2 + 2n^2)F_{\Delta}(n)  + (x^2 - n^2) \,2 n\, F^{'}_{\Delta}(n)\Big\}\,,
\end{split}
\end{align}
and it follows from \eqref{bounds for big F} and \eqref{unif_bound_sin} that 
\begin{align}\label{Pf_Lem5_eq_1_G+}
\big|G^{+}_{\Delta}(x)\big| \ll_m \frac{\Delta^2}{\Delta^2 + x^2}. 
\end{align}
Again, it may be useful to split the sum in \eqref{G^+_bound_real} into the ranges $\{n \leq |x|/2\}$, $\{|x|/2 < n \leq 2|x|\}$ and $\{2|x| < n\}$ to verify this last claim. The bound
\begin{align}\label{Pf_Lem5_eq_1_G-}
\big|G^{-}_{\Delta}(x)\big| \ll_m \frac{\Delta^2}{\Delta^2 + x^2}
\end{align} 
follows in an analogous way, grouping the terms $n$ and $1-n$ (for $n \geq 1$) in \eqref{G-_rewritten}. From \eqref{relation_g_G}, \eqref{Pf_Lem5_eq_1_G+} and \eqref{Pf_Lem5_eq_1_G-} we arrive at \eqref{bound_g_2m+1_real}.

\subsubsection*{Property {\rm(v)}} Since $g_{2m+1,\alpha,\Delta}^{\pm}$ are entire functions of exponential type $2\pi\Delta$ whose restrictions to $\R$ are integrable, it follows from the Paley-Wiener theorem that their Fourier transforms are continuous functions supported on the interval $[-\Delta,\Delta]$. Moreover, from the uniform bounds \eqref{bound_g_2m+1_real} we see that 
\begin{equation*}
		\big|\widehat{g}_{2m+1,\alpha,\Delta}^{\pm}(\xi)\big|\leq \int_{-\infty}^{\infty}\big|g_{2m+1,\alpha, \Delta}^{\pm}(x)\big|\,\dx \ll_m 1.
\end{equation*}

\subsubsection*{Properties {\rm(vi) {\it and} (vii)}} From \eqref{Pf_Lem5_eq_1_G+}, \eqref{Pf_Lem5_eq_1_G-}, and the fact that the Fourier transforms $\widehat{G}^{\pm}_{\Delta}$ are supported on $[-1,1]$, we may apply the Poisson summation formula pointwise to $G^{\pm}_{\Delta}$. Recalling that $G^{+}_{\Delta}$ interpolates the values of $F_{\Delta}$ at $\Z$, we use \eqref{relation_F_Delta_PSF} to derive that
\begin{align}\label{Eval_PSF_maj_G_log}
\begin{split}
\widehat{G}^{+}_{\Delta}(0)  = \sum_{n = -\infty}^{\infty} G^{+}_{\Delta}(n) & = \sum_{n = -\infty}^{\infty} F_{\Delta}(n) \\
& = \frac{1}{2} \int_{\alpha}^{3/2} \left(\sigma-\alpha\right)^{2m} \sum_{n = -\infty}^{\infty} \log\left( \frac{n^2 + \Delta^2}{n^2 + (\sigma - \frac12)^2\Delta^2}\right)\d\sigma\\
& = \frac{1}{2} \int_{\alpha}^{3/2} \left(\sigma-\alpha\right)^{2m} \left(2\pi \Delta(\tfrac32 - \sigma) - 2 \log\left(\frac{1 - e^{-2\pi (\sigma - \frac12)\Delta}}{1 - e^{-2\pi \Delta}}\right)\right)\d\sigma\\
& = \dfrac{\pi\Delta \left(\tfrac{3}{2}-\alpha\right)^{2m+2}}{(2m+1)(2m+2)}- \int_{\alpha}^{3/2}(\sigma-\alpha)^{2m}\,\log\left(\dfrac{1-e^{-2\pi(\sigma-\frac12)\Delta}}{1-e^{-2\pi\Delta}}\right)\d\sigma.
\end{split}
\end{align}
Above we have used the fact that, for $b \geq a >0$ (see, for instance, \cite[\S 4.2.1]{CC})
\begin{equation*}
\sum_{n = -\infty}^{\infty} \log\left( \frac{n^2 + b^2}{n^2 +a^2}\right) = 2\pi(b-a) - 2 \log\left(\dfrac{1-e^{-2\pi a}}{1-e^{-2\pi b}}\right).
\end{equation*}
One can prove this directly regarding both sides as a function of the variable $b$, observing that they agree when $b =a$, and showing that they have the same derivative. 

\smallskip

We proceed analogously for the minorant
\begin{align}\label{Eval_PSF_min_G_log}
\widehat{G}^{-}_{\Delta}(0)  = \sum_{n = -\infty}^{\infty} G^{-}_{\Delta}(n) & = \sum_{n = -\infty}^{\infty} F_{\Delta}(n+\hh) \nonumber \\
& = \frac{1}{2} \int_{\alpha}^{3/2} \left(\sigma-\alpha\right)^{2m} \sum_{n = -\infty}^{\infty} \log\left( \frac{(n+\frac12)^2 + \Delta^2}{(n+\frac12)^2 + (\sigma - \frac12)^2\Delta^2}\right)\d\sigma \nonumber\\
& = \frac{1}{2} \int_{\alpha}^{3/2} \left(\sigma-\alpha\right)^{2m} \left(2\pi \Delta(\tfrac32 - \sigma) - 2 \log\left(\frac{1 + e^{-2\pi (\sigma - \frac12)\Delta}}{1 + e^{-2\pi \Delta}}\right)\right)\d\sigma\\
& = \dfrac{\pi\Delta \left(\tfrac{3}{2}-\alpha\right)^{2m+2}}{(2m+1)(2m+2)}- \int_{\alpha}^{3/2}(\sigma-\alpha)^{2m}\,\log\left(\dfrac{1+ e^{-2\pi(\sigma-\frac12)\Delta}}{1 + e^{-2\pi\Delta}}\right)\d\sigma, \nonumber
\end{align}
now using the fact that, for $b \geq a >0$ (see \cite[\S 4.1.2]{CC})
\begin{equation*}
\sum_{n = -\infty}^{\infty} \log\left( \frac{(n+\frac12)^2 + b^2}{(n+\frac12)^2 +a^2}\right) = 2\pi(b-a) - 2 \log\left(\dfrac{1+e^{-2\pi a}}{1+e^{-2\pi b}}\right).
\end{equation*}
From \eqref{Eval_PSF_maj_G_log}, \eqref{Eval_PSF_min_G_log} and the dilation relation 
\begin{equation}\label{dilation_relation}
\widehat{g}^{\pm}_{\Delta} (\xi) = \frac{1}{\Delta} \widehat{G}^{\pm}_{\Delta} \left(\frac{\xi}{\Delta}\right),
\end{equation}
we arrive at \eqref{Lem8_FTg+_zero}. 

\smallskip

Using the fact that (see, for instance, \cite[\S 2.733 Eq.1] {GR})
\begin{equation*}
\int_{-\infty}^{\infty} f_{2m+1,\alpha}(x)\,\dx = \dfrac{\pi \left(\tfrac{3}{2}-\alpha\right)^{2m+2}}{(2m+1)(2m+2)},
\end{equation*}
we arrive at \eqref{Lem8_EV_int_maj} and \eqref{Lem8_EV_int_min} from \eqref{Lem8_FTg+_zero}.

\subsubsection*{Property {\rm(viii)}} From relation \eqref{dilation_relation} it suffices to find the explicit form of $\widehat{G}^{\pm}_{\Delta}(\xi)$ for $-1 \leq \xi \leq 1$. Since $\widehat{G}^{\pm}_{\Delta}(\xi)$ are even functions, we only need to consider the case $0 < \xi \leq 1$ (recall that the values at $\xi =0$ were computed in the proof of property (vii)).

\smallskip

We consider first the majorant. Recall that $G^{+}_{\Delta}(k) = F_{\Delta}(k)$ for all $k \in \Z$ and $(G^{+}_{\Delta})'(k) = F'_{\Delta}(k)$ for all $k \in \Z\setminus \{0\}$. Note also that $(G^{+}_{\Delta})'(0) = 0$, since $G^{+}_{\Delta}$ is an even function, and that $F'_{\Delta}(0) = 0$ except in the case $\alpha = \hh$ and $m=0$, for which $F_{\Delta}$ is not differentiable at $x=0$. Our starting point is a result of Vaaler \cite[Theorem 9]{V} that gives us
\begin{align}
\widehat{G}^{+}_{\Delta}(\xi) & = \left(1 - |\xi| \right)\sum_{k=-\infty}^{\infty}  G^{+}_{\Delta}(k) \,e^{-2\pi i k \xi} +  \frac{1}{2\pi i} \,\sgn(\xi) \sum_{k=-\infty}^{\infty}(G^{+}_{\Delta})'(k) \,e^{-2\pi i k \xi}\nonumber \\
& = \left(1 - |\xi| \right)\sum_{k=-\infty}^{\infty}  F_{\Delta}(k) \,e^{-2\pi i k \xi} +  \frac{1}{2\pi i} \,\sgn(\xi) \sum_{k\neq0}F'_{\Delta}(k) \,e^{-2\pi i k \xi}. \label{Step0_int_1_FT_maj}
\end{align}
Using \eqref{relation_F_Delta_PSF}, the first sum in \eqref{Step0_int_1_FT_maj} is given by
\begin{align}
\sum_{k=-\infty}^{\infty}  F_{\Delta}(k) \,e^{-2\pi i k \xi}  & = \sum_{k=-\infty}^{\infty} \left(\frac{1}{2} \int_{\alpha}^{3/2} \left(\sigma-\alpha\right)^{2m}\log\Bigg(\frac{k^2+\Delta^2}{k^2+(\sigma-\frac12)^2\Delta^2}\Bigg)\,\d\sigma\right)\,e^{-2\pi i k \xi}\nonumber \\
& = \frac{1}{2} \int_{\alpha}^{3/2} \left(\sigma-\alpha\right)^{2m} \left(\sum_{k=-\infty}^{\infty}\log\Bigg(\frac{k^2+\Delta^2}{k^2+(\sigma-\frac12)^2\Delta^2}\Bigg)e^{-2\pi i k \xi}\right)\d\sigma,\label{int_1_maj_FT_G}
\end{align}
where the use of Fubini's theorem is justified by the absolute convergence of the sum on the left-hand side (which follows by \eqref{bounds for big F}). The inner sum in \eqref{int_1_maj_FT_G} can be evaluated via Poisson summation applied to the Fourier transform pair
\begin{equation}\label{PS_pair_1}
h(x) = \log\Bigg(\frac{x^2+b^2}{x^2+a^2}\Bigg) \ \ \ {\rm and} \ \ \ \widehat{h}(\xi) = \frac{e^{-2\pi |\xi| a } - e^{-2\pi  |\xi| b}}{|\xi|}
\end{equation}
for real numbers $b \geq a >0$ (see \cite[\S 4.1.2]{CC}). We then arrive at
\begin{align}\label{Step2_int_1_FT_maj}
\begin{split}
\sum_{k=-\infty}^{\infty}  F_{\Delta}(k) \,e^{-2\pi i k \xi} & = \frac{1}{2} \int_{\alpha}^{3/2} \left(\sigma-\alpha\right)^{2m} \left( \sum_{k = -\infty}^{\infty}  \frac{e^{-2 \pi |\xi + k| (\sigma-\frac12)\Delta} - e^{-2 \pi |\xi + k| \Delta}}{|\xi + k|}\right)\,\d\sigma\\
& = \frac{1}{2} \sum_{k = -\infty}^{\infty}  \int_{\alpha}^{3/2} \left(\sigma-\alpha\right)^{2m} \left( \frac{e^{-2 \pi |\xi + k| (\sigma-\frac12)\Delta} - e^{-2 \pi |\xi + k| \Delta}}{|\xi + k|}\right)\d\sigma.
\end{split}
\end{align}
We shall use the following indefinite integral \cite[\S 2.321]{GR} in our computations 
\begin{align}\label{Int_GR_exp_poli}
\int x^n \,e^{-ax}\,\dx = -e^{-ax} \left( \sum_{\ell=0}^n \frac{\ell!\, \binom{n}{\ell}}{a^{\ell+1}}\,x^{n-\ell}\right).
\end{align}
Using \eqref{Int_GR_exp_poli} in \eqref{Step2_int_1_FT_maj} we get
\begin{align}
& \sum_{k=-\infty}^{\infty}  F_{\Delta}(k) \,e^{-2\pi i k \xi} \nonumber \\
& = \frac{1}{2} \sum_{k = -\infty}^{\infty} \frac{e^{-2\pi |\xi + k| (\alpha - \frac12)\Delta}}{|\xi + k|}\left( \frac{(2m)!}{(2\pi |\xi + k|\Delta)^{2m+1}} -e^{-2\pi |\xi + k| (\frac32 - \alpha)\Delta}\sum_{\ell=0}^{2m} \frac{\ell! \binom{2m}{\ell}}{(2\pi |\xi + k|\Delta)^{\ell+1}}\left(\tfrac32 - \alpha\right)^{2m-\ell} \right) \nonumber  \\
&  \ \ \ \ \ \ \ \ - \frac{1}{2} \sum_{k = -\infty}^{\infty} \frac{e^{-2\pi |\xi + k| \Delta}}{(2m+1) |\xi + k|} {\left( \tfrac32 - \alpha\right)^{2m+1}} \nonumber  \\
& = \frac{1}{2} \sum_{k = -\infty}^{\infty} \frac{1}{|\xi + k|}\left[ \frac{(2m)!\,\,e^{-2\pi |\xi + k| (\alpha - \frac12)\Delta}}{(2\pi|\xi + k|\Delta)^{2m+1}} -  \sum_{j=0}^{2m+1} \frac{\gamma_j \,e^{-2\pi |\xi + k| \Delta}}{(2\pi |\xi+k|\Delta)^{j}}\left(\tfrac32 - \alpha\right)^{2m+1-j}\right]\,, \label{Final_com_FT_G+_p1}
\end{align}
with $\gamma_{j}=\frac{(2m)!}{(2m+1-j)!}$, for $0\leq j \leq 2m+1$.

\smallskip

We now evaluate the second sum in \eqref{Step0_int_1_FT_maj}. Using \eqref{relation_F_Delta_PSF} we have
\begin{align}
\sum_{k\neq0}  F_{\Delta}'(k) \,e^{-2\pi i k \xi}  & = \sum_{k\neq0} \left(\int_{\alpha}^{3/2} \left(\sigma-\alpha\right)^{2m}\left( \frac{k}{k^2+\Delta^2} - \frac{k}{k^2+(\sigma-\frac12)^2\Delta^2}\right)\,\d\sigma\right)\,e^{-2\pi i k \xi} \nonumber \\
& = \int_{\alpha}^{3/2} \left(\sigma-\alpha\right)^{2m} \left(\sum_{k=-\infty}^{\infty}\left( \frac{k}{k^2+\Delta^2} - \frac{k}{k^2+(\sigma-\frac12)^2\Delta^2}\right)\,e^{-2\pi i k \xi}\right)\d\sigma, \label{int_2_maj_FT_G}
\end{align}
where the use of Fubini's theorem is again justified by the absolute convergence of the sum on the left-hand side, which again follows by \eqref{bounds for big F}. The inner sum in \eqref{int_2_maj_FT_G} can be evaluated via Poisson summation applied to the Fourier transform pair
\begin{equation}\label{PS_pair_2}
h(x) =   \frac{x}{x^2+a^2} - \frac{x}{x^2+b^2} \ \ \ {\rm and} \ \ \ \widehat{h}(\xi) = - \pi i \,\sgn(\xi) \left(e^{-2\pi |\xi|a} - e^{-2\pi |\xi| b}\right)
\end{equation}
for real numbers $b\geq a >0$ (see \cite[\S 4.1.2]{CC}).
We then arrive at the expression
\begin{align*}
\sum_{k\neq0}  F_{\Delta}'(k) \,e^{-2\pi i k \xi}  & = \pi i \int_{\alpha}^{3/2} \left(\sigma-\alpha\right)^{2m} \left( \sum_{k=-\infty}^{\infty} \sgn(\xi + k) \left( e^{-2\pi (\sigma - \frac12)|\xi + k|\Delta} - e^{-2\pi |\xi + k|\Delta}\right)\right)\,\d\sigma\\
& =  \pi i  \sum_{k=-\infty}^{\infty}  \sgn(\xi + k) \int_{\alpha}^{3/2} \left(\sigma-\alpha\right)^{2m} \left( e^{-2\pi (\sigma - \frac12)|\xi + k|\Delta} - e^{-2\pi |\xi + k|\Delta}\right)\d\sigma.
\end{align*}
The latter use of Fubini's theorem can be justified by the absolute convergence of the double integral (one can explicitly sum the exponentials in geometric progressions). In the case $\alpha = \hh$ and $m=0$ one has to be a bit more careful and group the terms $k$ and $-k-1$, for $k \geq 0$, to have convergence. Using \eqref{Int_GR_exp_poli} we get
\begin{align}
&\sum_{k\neq0}  F_{\Delta}'(k) \,e^{-2\pi i k \xi}  \nonumber \\
& = \pi i  \sum_{k=-\infty}^{\infty}  \sgn(\xi + k) \left( \frac{(2m)!\,e^{-2\pi |\xi + k| (\alpha - \frac12)\Delta}}{(2\pi |\xi + k|\Delta)^{2m+1}} -e^{-2\pi |\xi + k| \Delta}\sum_{\ell=0}^{2m} \frac{\ell! \binom{2m}{\ell}}{(2\pi |\xi + k|\Delta)^{\ell+1}}\left(\tfrac32 - \alpha\right)^{2m-\ell} \right)\nonumber \\
&  \ \ \ \ \ \ \ \ - \pi i \sum_{k = -\infty}^{\infty} \sgn(\xi + k)\,\frac{e^{-2\pi |\xi + k| \Delta}}{(2m+1)} {\left( \tfrac32 - \alpha\right)^{2m+1}}\nonumber \\
& = \pi i \sum_{k=-\infty}^{\infty}  \sgn(\xi + k) \left[ \left( \frac{(2m)!\,e^{-2\pi |\xi + k| (\alpha - \frac12)\Delta}}{(2\pi |\xi + k|\Delta)^{2m+1}}\right) -   \sum_{j=0}^{2m+1} \frac{\gamma_j \,e^{-2\pi |\xi + k| \Delta}}{(2\pi |\xi + k|\Delta)^{j}}\left(\tfrac32 - \alpha\right)^{2m+1-j}\right]\,,\label{Final_com_FT_G+_p2}
\end{align}
with $\gamma_{j}=\frac{(2m)!}{(2m+1-j)!}$, for $0\leq j \leq 2m+1$.

\smallskip

From \eqref{Step0_int_1_FT_maj}, \eqref{Final_com_FT_G+_p1} and \eqref{Final_com_FT_G+_p2} we find, for $0 < \xi \leq 1$, that
\begin{align*}
\widehat{G}^{+}_{\Delta}(\xi) =  \frac{1}{2} \sum_{k = -\infty}^{\infty} \frac{k+1}{|\xi + k|}\left[ \frac{(2m)!\,\,e^{-2\pi |\xi + k| (\alpha - \frac12)\Delta}}{(2\pi|\xi + k|\Delta)^{2m+1}} -  \sum_{j=0}^{2m+1} \frac{\gamma_j \,e^{-2\pi |\xi + k| \Delta}}{(2\pi |\xi+k|\Delta)^{j}}\left(\tfrac32 - \alpha\right)^{2m+1-j}\right].
\end{align*}
The change of variables \eqref{dilation_relation} leads us directly to the expression \eqref{FT_maj_general_case} for the majorant.

\smallskip

The proof for the minorant follows along the same lines, starting with Vaaler's relation \cite[Theorem 9]{V} and the fact that $G^{-}_{\Delta}(k+\hh) = F_{\Delta}(k+\hh)$ and $(G^{-}_{\Delta})'(k+\hh) = F'_{\Delta}(k+\hh)$ for all $k \in \Z$, we have
\begin{align*}
\widehat{G}_{\Delta}^{-}(\xi) & = (1 - |\xi|) \sum_{k=-\infty}^{\infty} G_{\Delta}^{-}(k + \hh) \,e^{-2\pi i (k+\frac12) \xi}+ \frac{1}{2\pi i} \,\sgn(\xi) \sum_{k=-\infty}^{\infty} (G_{\Delta}^{-})'(k + \hh)\,e^{-2\pi i (k+\frac12) \xi}\\
& = (1 - |\xi|) \sum_{k=-\infty}^{\infty} F_{\Delta}(k + \hh) \,e^{-2\pi i (k+\frac12) \xi}+ \frac{1}{2\pi i} \,\sgn(\xi) \sum_{k=-\infty}^{\infty} F'_{\Delta}(k + \hh)\,e^{-2\pi i (k+\frac12) \xi}.
\end{align*}
One now uses Poisson summation with the pairs \eqref{PS_pair_1} and \eqref{PS_pair_2} to derive that
\begin{align*}
\sum_{k=-\infty}^{\infty}  F_{\Delta}(k + \hh) \,e^{-2\pi i (k+\frac12) \xi}  & = \frac{1}{2} \int_{\alpha}^{3/2} \left(\sigma-\alpha\right)^{2m} \left(\sum_{k=-\infty}^{\infty}\log\Bigg(\frac{(k+\hh)^2+\Delta^2}{(k+\hh)^2+(\sigma-\frac12)^2\Delta^2}\Bigg)e^{-2\pi i (k+\frac12) \xi}\right)\d\sigma\\
& = \frac{1}{2} \int_{\alpha}^{3/2} \left(\sigma-\alpha\right)^{2m} \left( \sum_{k = -\infty}^{\infty} (-1)^k\, \frac{e^{-2 \pi |\xi + k| (\sigma-\frac12)\Delta} - e^{-2 \pi |\xi + k| \Delta}}{|\xi + k|}\right)\d\sigma
\end{align*}
and 
\begin{align*}
\sum_{k=-\infty}^{\infty} &  F_{\Delta}'(k+\hh) \,e^{-2\pi i (k+\frac12) \xi}  \\
& = \int_{\alpha}^{3/2} \left(\sigma-\alpha\right)^{2m} \left(\sum_{k=-\infty}^{\infty}\left( \frac{(k+\hh)}{(k+\hh)^2+\Delta^2} - \frac{(k+\hh)}{(k+\hh)^2+(\sigma-\frac12)^2\Delta^2}\right)\,e^{-2\pi i (k+\frac12) \xi}\right)\d\sigma\\
& = \pi i \int_{\alpha}^{3/2} \left(\sigma-\alpha\right)^{2m} \left( \sum_{k=-\infty}^{\infty} (-1)^k \sgn(\xi + k) \left( e^{-2\pi (\sigma - \frac12)|\xi + k|\Delta} - e^{-2\pi |\xi + k|\Delta}\right)\right)\,\d\sigma.
\end{align*}
The remaining computations are analogous to the majorant case. This concludes the proof of the lemma.

\end{proof}

\section{The sum over prime powers}

The idea for our proof of Theorem \ref{Thm2}, in the case of odd $n$, is to replace the functions $f_{n,\alpha}$ in our representation lemma (Lemma \ref{Rep_lem}) by appropriate majorants and minorants, apply the Guinand-Weil explicit formula (Lemma \ref{GW}), and then asymptotically evaluate the resulting terms. Our majorants and minorants of exponential type $2\pi \Delta$, denoted here by $m_{\Delta}^{\pm}$, are even functions, and hence the resulting sum over prime powers will appear as
$$\frac{1}{\pi}\sum_{n\geq 2}\dfrac{\Lambda(n)}{\sqrt{n}} \,\widehat{m}^{\pm}_{\Delta}\left(\dfrac{\log n}{2\pi}\right) \cos(t \log n).$$
The purpose of this section is provide a detailed qualitative study of this expression. In order to ease the flow of the proofs below, we collect several auxiliary calculus and number theory facts in two appendices at the end of the paper.

\subsection{The case of the Poisson kernel $f_{-1,\alpha}$} Recall that in Lemma \ref{lemma_extr_Poisson} we denoted the Poisson kernel by $h_{\beta}(x) := f_{-1,\alpha}(x) = \frac{\beta}{\beta^2 + x^2}$, by introducing the parameter $\beta = \alpha - \frac12$.

\begin{lemma} [Sum over prime powers I] \label{Lem_SOPP_I} Assume the Riemann hypothesis.
Let $0 < \beta < \frac12$ and $\Delta \geq 1$, and let $m^{\pm}_{\Delta} = m^{\pm}_{\beta,\Delta}$ be the extremal functions for the Poisson kernel obtained in Lemma \ref{lemma_extr_Poisson}. Then
\begin{align}\label{Lem8_prime_sum_maj_Poisson_NEW}
\begin{split}
\frac{1}{\pi}\sum_{n\geq 2}\dfrac{\Lambda(n)}{\sqrt{n}} \,\widehat{m}^{+}_{\Delta}& \left(\dfrac{\log n}{2\pi}\right) \cos(t \log n)  \\
&  \geq - \dfrac{2\beta\,e^{(1-2\beta)\pi\Delta} - 2^{\frac{1}{2}-\beta} \big(\frac12 + \beta\big)^2 + 2^{\frac{1}{2}+\beta}e^{-4\pi\beta\Delta} \big(\frac12 - \beta\big)^2}{\big(\frac{1}{4}-\beta^2\big)\big(1 - e^{-2\pi\beta\Delta}\big)^2} + O\left(\frac{\Delta^4}{\beta}\right)
\end{split}
\end{align}
and
\begin{align}\label{Lem8_prime_sum_maj_Poisson_NEW_2}
\begin{split}
\frac{1}{\pi}\sum_{n\geq 2}\dfrac{\Lambda(n)}{\sqrt{n}} \,\widehat{m}^{-}_{\Delta}&\left(\dfrac{\log n}{2\pi}\right) \cos(t \log n)  \\
&\leq \dfrac{2\beta\,e^{(1-2\beta)\pi\Delta} - 2^{\frac{1}{2}-\beta} \big(\frac12 + \beta\big)^2 + 2^{\frac{1}{2}+\beta}e^{-4\pi\beta\Delta} \big(\frac12 - \beta\big)^2}{\big(\frac{1}{4}-\beta^2\big)\big(1+ e^{-2\pi\beta\Delta}\big)^2} + O\left(\beta \Delta^4\right).\end{split}
\end{align}

\end{lemma}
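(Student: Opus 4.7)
The plan is to exploit the nonnegativity of $\widehat{m}^{\pm}_{\beta,\Delta}$ on its support, which turns each oscillatory inequality into the evaluation of a non-oscillatory Dirichlet-type sum via prime number theory under RH. From the explicit formula \eqref{FT_maj_min_Poisson} one reads off that $\widehat{m}^{\pm}_{\beta,\Delta}(\xi)\geq 0$ on $[-\Delta,\Delta]$, since the numerator $2\sinh\!\bigl(2\pi\beta(\Delta-|\xi|)\bigr)$ is nonnegative there. Using $\cos(t\log n)\geq -1$ for the majorant and $\cos(t\log n)\leq 1$ for the minorant, together with $\widehat{m}^{\pm}_{\beta,\Delta}\geq 0$, bounds each left-hand side in \eqref{Lem8_prime_sum_maj_Poisson_NEW} and \eqref{Lem8_prime_sum_maj_Poisson_NEW_2} by $\mp \frac{1}{\pi}\sum_{n\leq X}\tfrac{\Lambda(n)}{\sqrt{n}}\widehat{m}^{\pm}_{\beta,\Delta}(\log n/(2\pi))$, where $X:=e^{2\pi\Delta}$ is the support cutoff. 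Substituting \eqref{FT_maj_min_Poisson} and using $e^{\pm 2\pi\beta(\log n/(2\pi))}=n^{\pm\beta}$ splits each majorizing sum as
\[
\frac{1}{\bigl(e^{\pi\beta\Delta}\mp e^{-\pi\beta\Delta}\bigr)^{2}}\left[\,e^{2\pi\beta\Delta}\sum_{n\leq X}\frac{\Lambda(n)}{n^{1/2+\beta}} \;-\; e^{-2\pi\beta\Delta}\sum_{n\leq X}\frac{\Lambda(n)}{n^{1/2-\beta}}\,\right].
\]

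Each inner sum I would evaluate by partial summation against $\psi(u)=\sum_{n\leq u}\Lambda(n)$. Under RH one has $\psi(u)=u+O\bigl(u^{1/2}(\log u)^{2}\bigr)$, and because $\Lambda(1)=0$ the integration naturally begins at $u=2$, yielding for $s\in(0,1)$
\[
\sum_{n\leq X}\frac{\Lambda(n)}{n^{s}} \;=\; \frac{X^{1-s}}{1-s} \;-\; \frac{s\cdot 2^{1-s}}{1-s} \;+\; O\!\left(X^{1/2-s}(\log X)^{2}\right).
\]
The boundary constant $-s\cdot 2^{1-s}/(1-s)$, which would be replaced by $-s/(1-s)$ if one started at $u=1$, is precisely what produces the $2^{1/2\mp\beta}$ factors in the statement. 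Taking $s=\tfrac12\pm\beta$ with $X=e^{2\pi\Delta}$, the main terms combine through $e^{\pm 2\pi\beta\Delta}\cdot e^{(1\mp 2\beta)\pi\Delta}=e^{\pi\Delta}$ to yield $2\beta\,e^{\pi\Delta}/(\tfrac14-\beta^{2})$, while the boundary constants contribute
\[
-\,\frac{2^{1/2-\beta}\bigl(\tfrac12+\beta\bigr)^{2}\,e^{2\pi\beta\Delta} \;-\; 2^{1/2+\beta}\bigl(\tfrac12-\beta\bigr)^{2}\,e^{-2\pi\beta\Delta}}{\tfrac14-\beta^{2}}.
\]
Finally, factoring the outer denominator as $(e^{\pi\beta\Delta}\mp e^{-\pi\beta\Delta})^{2}=e^{2\pi\beta\Delta}(1\mp e^{-2\pi\beta\Delta})^{2}$ and distributing $e^{-2\pi\beta\Delta}$ into the numerator converts $e^{\pi\Delta}\mapsto e^{(1-2\beta)\pi\Delta}$ and $e^{-2\pi\beta\Delta}\mapsto e^{-4\pi\beta\Delta}$, reproducing exactly the three-term numerators of \eqref{Lem8_prime_sum_maj_Poisson_NEW} and \eqref{Lem8_prime_sum_maj_Poisson_NEW_2}.

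For the error analysis, the RH remainder in each inner sum, after multiplication by the outer factor $e^{\pm 2\pi\beta\Delta}$, contributes $O(\Delta^{2})$ to the bracket once the exponentials cancel and one uses $\log X\asymp\Delta$. For the majorant, the lower bound $(e^{\pi\beta\Delta}-e^{-\pi\beta\Delta})^{2}\geq (2\pi\beta\Delta)^{2}$ gives a final error of order $1/\beta^{2}$, which is comfortably absorbed by the stated $O(\Delta^{4}/\beta)$ using $\Delta\geq 1$; for the minorant the denominator $(e^{\pi\beta\Delta}+e^{-\pi\beta\Delta})^{2}\geq 4$ is bounded below by a constant, giving an error absorbed into $O(\beta\Delta^{4})$. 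The main obstacle is really the bookkeeping: verifying that the algebraic simplification of all the exponential factors leaves exactly the three advertised terms in the numerator and does not produce spurious cross terms. Once the boundary constant $2^{1-s}$ is identified as the source of the $2^{1/2\mp\beta}$ coefficients, the rest is a careful but direct computation.
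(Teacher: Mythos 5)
Your identification of the main term is correct and follows the paper's route: the nonnegativity of $\widehat{m}^{\pm}_{\beta,\Delta}$ on $[-\Delta,\Delta]$ reduces everything to the non-oscillatory sum $\sum_{n\le x}\frac{\Lambda(n)}{\sqrt n}\big(x^{\beta}n^{-\beta}-n^{\beta}x^{-\beta}\big)$ with $x=e^{2\pi\Delta}$, and your partial-summation main term, including the boundary constant $-s\,2^{1-s}/(1-s)$ that produces the $2^{1/2\mp\beta}$ coefficients, agrees with what the paper computes in Appendix B.4.

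The gap is in the error analysis, and it is not cosmetic. First, the remainder in $\sum_{n\le X}\Lambda(n)n^{-s}$ is not $O\big(X^{1/2-s}(\log X)^2\big)$: partial summation against $\psi(u)=u+E(u)$ also produces the term $s\int_2^X E(u)u^{-s-1}\,du$, which for $s=\frac12-\beta$ is of size up to $X^{\beta}(\log X)^3$ and for $s=\frac12+\beta$ of size up to $(\log X)^3$; neither carries the compensating power of $X$ you assert. If you estimate the two Dirichlet sums separately and add their errors in absolute value, the bracket error is of order $X^{\beta}(\log X)^3$, and after the prefactor this yields $O(\Delta^3/\beta^2)$ for the majorant and $O(\Delta^3)$ for the minorant, both of which exceed the stated $O(\Delta^4/\beta)$ and $O(\beta\Delta^4)$ whenever $\beta\Delta<1$. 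The factor $\beta$ in the stated errors (which is essential downstream, e.g. for the stability of the lower bound on $S_{-1,\alpha}(t)$ as $\alpha\to\frac12^+$) can only be extracted by keeping the two sums together: the boundary contributions $E(X)X^{-1/2}$ cancel exactly, and the remaining combined error integrand $\big((\tfrac12+\beta)x^{\beta}u^{-1-\beta}-(\tfrac12-\beta)x^{-\beta}u^{-1+\beta}\big)(\log u)^2$ must be estimated via the mean value theorem applied to $t\mapsto(\tfrac12+t)x^{t}$, which is exactly how the paper obtains the bound $O\big(\beta x^{\beta}(\log x)^4\big)$ in Appendix B.4. Second, even granting your claimed $O(\Delta^2)$ bracket error, the resulting majorant error $O(1/\beta^2)$ is not absorbed by $O(\Delta^4/\beta)$: that would require $\beta\Delta^4\gg1$, which fails for small $\beta$, so the assertion that it is ``comfortably absorbed using $\Delta\ge1$'' is incorrect. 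The proof therefore needs the combined treatment of the two sums before any absolute-value estimates are taken.
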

\begin{proof}Let $x = e^{2\pi \Delta}$ and note that the sums in \eqref{Lem8_prime_sum_maj_Poisson_NEW} and \eqref{Lem8_prime_sum_maj_Poisson_NEW_2}  only run for $2 \leq n \leq x$. Using the explicit description for the Fourier transforms $\widehat{m}_{\Delta}^{\pm}$ given by \eqref{FT_maj_min_Poisson} we get
\begin{align}\label{Poisson_lem_expres_antes}
\frac{1}{\pi}\sum_{n\geq 2}\dfrac{\Lambda(n)}{\sqrt{n}} \,\widehat{m}^{\pm}_{\Delta}\left(\dfrac{\log n}{2\pi}\right) \cos(t \log n) = \frac{e^{-2\pi \beta \Delta}}{\big(1 \mp e^{-2\pi \beta \Delta}\big)^2}\!\left(\sum_{n\leq e^{2\pi\Delta}}\dfrac{\Lambda(n)}{n^{1/2}}\,\bigg(\dfrac{e^{2\pi\beta\Delta}}{n^\beta}-\dfrac{n^\beta}{e^{2\pi\beta\Delta}}\bigg)\cos(t\log n)\!\right).
\end{align}
In the case of the majorant we use that $\cos(t\log n) \geq -1$ in \eqref{Poisson_lem_expres_antes}, together with Appendix {\bf B.4}, to get  \begin{align*}
\frac{1}{\pi}\sum_{n\geq 2} & \dfrac{\Lambda(n)}{\sqrt{n}} \,\widehat{m}^{+}_{\Delta}\left(\dfrac{\log n}{2\pi}\right) \cos(t \log n)  \geq - \frac{e^{-2\pi \beta \Delta}}{\big(1 -e^{-2\pi \beta \Delta}\big)^2}\sum_{n\leq e^{2\pi\Delta}}\dfrac{\Lambda(n)}{n^{1/2}}\,\bigg(\dfrac{e^{2\pi\beta\Delta}}{n^\beta}-\dfrac{n^\beta}{e^{2\pi\beta\Delta}}\bigg)\\
& = - \frac{e^{-2\pi \beta \Delta}}{\big(1 -e^{-2\pi \beta \Delta}\big)^2} \left( \frac{2 \beta e^{\pi \Delta} - 2^{\frac{1}{2}-\beta}e^{2\pi \beta \Delta} \big(\frac12 + \beta\big)^2 + 2^{\frac{1}{2}+\beta}e^{-2\pi\beta\Delta} \big(\frac12 - \beta\big)^2}{\frac{1}{4}-\beta^2} + O\left(\beta \,e^{2\pi \beta \Delta} \Delta^4 \right)\right)\\
& = - \dfrac{2\beta\,e^{(1-2\beta)\pi\Delta} - 2^{\frac{1}{2}-\beta} \big(\frac12 + \beta\big)^2 + 2^{\frac{1}{2}+\beta}e^{-4\pi\beta\Delta} \big(\frac12 - \beta\big)^2}{\big(\frac{1}{4}-\beta^2\big)\big(1 - e^{-2\pi\beta\Delta}\big)^2} + O\left(\frac{\Delta^4}{\beta}\right),
\end{align*}
where we have used the fact 
\begin{equation*}
\frac{1}{\big(1 -e^{-2\pi \beta \Delta}\big)^2} \leq \frac{1}{\big(1 -e^{-\beta}\big)^2} \ll \frac{1}{\beta^2}.
\end{equation*}
In the case of the minorant we use that $\cos(t\log n) \leq 1$ in \eqref{Poisson_lem_expres_antes}, together with Appendix {\bf B.4}, to get  
\begin{align*}
\frac{1}{\pi}\sum_{n\geq 2} & \dfrac{\Lambda(n)}{\sqrt{n}} \,\widehat{m}^{-}_{\Delta}\left(\dfrac{\log n}{2\pi}\right) \cos(t \log n)  \leq  \frac{e^{-2\pi \beta \Delta}}{\big(1 +e^{-2\pi \beta \Delta}\big)^2}\sum_{n\leq e^{2\pi\Delta}}\dfrac{\Lambda(n)}{n^{1/2}}\,\bigg(\dfrac{e^{2\pi\beta\Delta}}{n^\beta}-\dfrac{n^\beta}{e^{2\pi\beta\Delta}}\bigg)\\
&  =  \frac{e^{-2\pi \beta \Delta}}{\big(1 +e^{-2\pi \beta \Delta}\big)^2} \left( \frac{2 \beta e^{\pi \Delta} - 2^{\frac{1}{2}-\beta}e^{2\pi \beta \Delta} \big(\frac12 + \beta\big)^2 + 2^{\frac{1}{2}+\beta}e^{-2\pi\beta\Delta} \big(\frac12 - \beta\big)^2}{\frac{1}{4}-\beta^2} + O\left(\beta \,e^{2\pi \beta \Delta} \Delta^4 \right)\right)\\
& = \dfrac{2\beta\,e^{(1-2\beta)\pi\Delta} - 2^{\frac{1}{2}-\beta} \big(\frac12 + \beta\big)^2 + 2^{\frac{1}{2}+\beta}e^{-4\pi\beta\Delta} \big(\frac12 - \beta\big)^2}{\big(\frac{1}{4}-\beta^2\big)\big(1+ e^{-2\pi\beta\Delta}\big)^2} + O\left(\beta \Delta^4\right).
\end{align*}
This proves the lemma.
\end{proof}

\subsection{The case of $f_{2m+1,\alpha}$, for $m\geq 0$} We now consider the sum over prime powers applied to the extremal functions of exponential type $2\pi\Delta$ for the even functions $f_{2m+1,\alpha}$ defined in \eqref{Def_f_2m+1}. The next lemma collects the required bounds for our purposes.

\begin{lemma} [Sum over prime powers II] \label{Lem_SOPP_II} Assume the Riemann hypothesis.
Let $m \geq 0$, $\hh \leq \alpha < 1$ and $\Delta \geq 1$. Let $g^{\pm}_{\Delta} = g^{\pm}_{2m+1, \alpha,\Delta}$ be  the extremal functions for $f_{2m+1,\alpha}$ obtained in Lemma \ref{lema_extremal_f_2m+1_alpha}, and let $c>0$ be a given real number. In the region 
$$ \pi \Delta (1 - \alpha)^{2} \geq c$$
we have
\begin{align}\label{Lem8_prime_sum_maj}
\begin{split}
& \mp \frac{1}{\pi}\sum_{n\geq 2}\dfrac{\Lambda(n)}{\sqrt{n}} \,\widehat{g}^{\pm}_{\Delta}\left(\dfrac{\log n}{2\pi}\right) \cos(t \log n)  \leq \dfrac{(2\alpha-1)(2m)!}{\alpha(1-\alpha)}\dfrac{e^{(2-2\alpha)\pi\Delta}}{(2\pi\Delta)^{2m+2}} +  O_{m,c}\left(\dfrac{e^{(2-2\alpha)\pi\Delta}}{(1-\alpha)^{2}\Delta^{2m+3}}\right).
\end{split}
\end{align}

\end{lemma}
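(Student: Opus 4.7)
The plan is to substitute the explicit formula \eqref{FT_maj_general_case} for $\widehat{g}^{\pm}_{2m+1,\alpha,\Delta}(\xi)$ at $\xi=(\log n)/(2\pi)$ and reduce the left-hand side of \eqref{Lem8_prime_sum_maj} to a double sum indexed by $n \ge 2$ and $k\in\mathbb{Z}$. The support of $\widehat{g}^{\pm}_\Delta$ in $[-\Delta,\Delta]$ truncates the $n$-sum at $n\le e^{2\pi\Delta}$; for $\xi\in(0,\Delta]$, $|\xi+k\Delta|=\xi+k\Delta$ when $k\ge 0$ and $|\xi+k\Delta|=|k|\Delta-\xi$ when $k\le-2$, while the $k=-1$ row vanishes thanks to the $(k+1)$ prefactor.

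The critical observation is that the first internal summand of the $k=0$ block must be paired with that of the $k=-2$ block before any inequality on $\cos(t\log n)$ is applied, because only the combination has a definite sign. Using $e^{-2\pi\xi(\alpha-\frac12)}=n^{\frac12-\alpha}$ and $2\pi\xi=\log n$, the $k=0$ piece equals $\pi(2m)!/[n^{\alpha-\frac12}(\log n)^{2m+2}]$; using $(k+1)=-1$ and $e^{-2\pi|\xi+k\Delta|(\alpha-\frac12)}=e^{-4\pi\Delta(\alpha-\frac12)}n^{\alpha-\frac12}$ at $k=-2$, the corresponding piece equals $-\pi(2m)!\,e^{-4\pi\Delta(\alpha-\frac12)}n^{\alpha-\frac12}/(4\pi\Delta-\log n)^{2m+2}$. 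A direct comparison (valid for both $\widehat{g}^+$ and $\widehat{g}^-$, since $(\pm 1)^{k}=1$ at $k=0,-2$) shows the sum is nonnegative on $2\le n<e^{2\pi\Delta}$, which allows me to apply $|\cos(t\log n)|\le 1$ to the combined expression without losing the sign structure. Evaluating the resulting sums via partial summation against $\psi(x)=x+O(x^{1/2}\log^2 x)$ (the only place RH is used) together with the Appendix B identities $\sum_{n\le x}\Lambda(n)/n^\alpha\sim x^{1-\alpha}/(1-\alpha)$ and $\sum_{n\le x}\Lambda(n)/n^{1-\alpha}\sim x^\alpha/\alpha$ at $x=e^{2\pi\Delta}$, the two pieces become $\frac{(2m)!\,e^{(2-2\alpha)\pi\Delta}}{(1-\alpha)(2\pi\Delta)^{2m+2}}$ and $\frac{(2m)!\,e^{(2-2\alpha)\pi\Delta}}{\alpha(2\pi\Delta)^{2m+2}}$; because they enter the combination with opposite signs, their \emph{difference} produces the coefficient $\frac{1}{1-\alpha}-\frac{1}{\alpha}=\frac{2\alpha-1}{\alpha(1-\alpha)}$ claimed in the lemma.

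All remaining pieces fall into the error. The polynomial-in-$1/\xi$ part of the $k=0$ bracket yields absolutely convergent sums $\sum_n\Lambda(n)n^{-3/2}(\log n)^{-j-1}=O_m(1)$; the polynomial part of $k=-2$ carries an extra factor $e^{-4\pi\Delta}$ and is exponentially smaller; and the blocks $k\ge 1$ together with the odd blocks $k\le-3$ carry an exponential $e^{-2\pi(|k|-1)\Delta(\alpha-\frac12)}$ multiplied by $1/(|k|\Delta)^{2m+2}$. Bounding $|\cos|\le 1$, using $\sum_{n\le e^{2\pi\Delta}}\Lambda(n)/\sqrt n\ll e^{\pi\Delta}$, and summing the geometric series in $|k|$, this tail is of size $e^{(2-2\alpha)\pi\Delta}/[(1-\alpha)^2\Delta^{2m+3}]$, matching the stated $O_{m,c}$ error; the hypothesis $\pi\Delta(1-\alpha)^2\ge c$ is precisely what absorbs the $(1-\alpha)^{-1}$-type constants arising from these geometric-series manipulations and converts them into a uniform constant.

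The main obstacle will be the sign analysis justifying the pairing of the $k=0$ and $k=-2$ first subterms: one has to verify that the combined quantity
\begin{equation*}
\frac{1}{n^{\alpha-\frac12}(\log n)^{2m+2}}-\frac{e^{-4\pi\Delta(\alpha-\frac12)}n^{\alpha-\frac12}}{(4\pi\Delta-\log n)^{2m+2}}
\end{equation*}
is nonnegative on $2\le n<e^{2\pi\Delta}$ (it equals $0$ at the upper endpoint), and to carry both partial-summation expansions to the precision needed to expose the cancellation that produces the $(2\alpha-1)$ factor while still controlling their secondary errors by $e^{(2-2\alpha)\pi\Delta}/[(1-\alpha)^2\Delta^{2m+3}]$. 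Uniformity in $\alpha$ as $\alpha\downarrow\tfrac12$ is delicate because the exponential factor $e^{-2\pi\xi(\alpha-\frac12)}$ degenerates in that limit; this is exactly where the hypothesis $\pi\Delta(1-\alpha)^2\ge c$ is invoked, so that all implied constants are absorbed cleanly into $O_{m,c}$.
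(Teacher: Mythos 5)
Your overall architecture is the paper's: substitute the explicit Fourier transform \eqref{FT_maj_general_case}, truncate at $n\le e^{2\pi\Delta}$, pair the $k=0$ block with the $k=-2$ block to get a nonnegative combination before applying $|\cos(t\log n)|\le 1$, and extract the main term $\tfrac{1}{1-\alpha}-\tfrac{1}{\alpha}=\tfrac{2\alpha-1}{\alpha(1-\alpha)}$ from the RH form of the prime number theorem. The sign verification you flag as the main obstacle is correct and easy (both the exponential and the power of the logarithm are monotone in the right direction on $\log 2\le \log n\le 2\pi\Delta$).

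The genuine gap is in your treatment of the tail blocks $k\ge 1$ and $k\le -3$. You propose to bound these termwise via $|\cos|\le 1$, $\sum_{n\le x}\Lambda(n)/\sqrt{n}\ll x^{1/2}$, and a geometric series in $k$, claiming the hypothesis $\pi\Delta(1-\alpha)^2\ge c$ absorbs the resulting ``$(1-\alpha)^{-1}$-type'' constants. But the geometric series $\sum_{k\ge1}e^{-2\pi k\Delta(\alpha-\frac12)}$ produces a factor of order $\big((\alpha-\tfrac12)\Delta\big)^{-1}$, which is a $(\alpha-\tfrac12)^{-1}$-type constant, not a $(1-\alpha)^{-1}$-type one, and the hypothesis gives no control whatsoever on $\alpha-\tfrac12$ (near $\alpha=\tfrac12$ it is vacuous). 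Concretely, already the single block $k=1$, estimated in absolute value by the sharper bound $\sum_{n\le x}\Lambda(n)\,n^{-\alpha}(\log n+\log x)^{-2m-2}\asymp x^{1-\alpha}\big((1-\alpha)(2\log x)^{2m+2}\big)^{-1}$, contributes $\asymp x^{\frac12-\alpha}\,x^{1-\alpha}\big((1-\alpha)(\log x)^{2m+2}\big)^{-1}$, which at $\alpha=\tfrac12$ exceeds the claimed error $x^{1-\alpha}\big((1-\alpha)^2(\log x)^{2m+3}\big)^{-1}$ by a factor $\asymp \Delta$; for $m=0$ the unweighted $k$-sum even diverges at $\alpha=\tfrac12$. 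Since the lemma is stated for all $\tfrac12\le\alpha<1$ and its uniformity as $\alpha\to\tfrac12^+$ is what the paper needs to recover the critical-line results, this is not a removable loss. The fix is the paper's: keep the pairing $k\leftrightarrow-(k+2)$ for \emph{every} $k\ge 0$, not just $k=0$, and exploit the cancellation inside each pair (this is exactly Appendix \textbf{B.3}, resting on the mean-value estimates of \textbf{A.5}); the difference in each pair carries a factor $\asymp(2\alpha-1)$ that neutralizes the $(\alpha-\tfrac12)^{-1}$ coming from the geometric series, leaving only the benign $(1-\alpha)^{-2}$ in the error.
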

\begin{proof} Again we let $x = e^{2\pi \Delta}$ and note that the sum in \eqref{Lem8_prime_sum_maj} only runs for $2 \leq n \leq x$. Our idea is to explore the formula \eqref{FT_maj_general_case}. First observe that, for $0 < \xi < \Delta$, we have
\begin{align}\label{Pf_Lemma8_sum_primes_eq1}
\sum_{k\neq 0} \frac{|k+1|}{|\xi+k\Delta|} \sum_{j=0}^{2m+1}\frac{\gamma_{j}\,e^{-2\pi|\xi+k\Delta|}}{(2\pi |\xi+k\Delta|)^{j}}\left(\tfrac{3}{2}-\alpha\right)^{2m+1-j} \ll_m e^{-2\pi \Delta}.
\end{align}
Using \eqref{FT_maj_general_case}, \eqref{Pf_Lemma8_sum_primes_eq1} and the prime number theorem (it suffices to use the weaker estimate $\sum_{n \leq x} \frac{\Lambda(n)}{\sqrt{n}} \ll x^{1/2}$) we find that
\begin{align*}
 \mp \frac{1}{\pi}\sum_{n\geq 2}\dfrac{\Lambda(n)}{\sqrt{n}}& \,\widehat{g}^{\pm}_{\Delta}\left(\dfrac{\log n}{2\pi}\right) \cos(t \log n)  \\
&    = \mp (2m)!\sum_{n \leq x}\dfrac{\Lambda(n)}{\sqrt{n}} \left(\sum_{k=-\infty}^{\infty} \frac{(\pm 1)^k\,(k+1)\,e^{-|\log nx^k|(\alpha - \frac12)}}{|\log n x^k|^{2m+2}}\right) \cos (t \log n)  \\
&   \ \ \ \ \ \ \ \ \ \ \ \  \pm \sum_{j=0}^{2m+1}\gamma_{j}\left(\tfrac{3}{2}-\alpha\right)^{2m+1-j}\,\re \left(\sum_{n\leq x}\dfrac{\Lambda(n)}{n^{\frac{3}{2}+it}(\log n)^{j+1}}\right) + O_m\big(x^{-1/2}\big)\\
&    = \mp (2m)!\sum_{n \leq x}\dfrac{\Lambda(n)}{\sqrt{n}} \left(\sum_{k=-\infty}^{\infty} \frac{(\pm 1)^k \,(k+1)\,e^{-|\log nx^k|(\alpha - \frac12)}}{|\log n x^k|^{2m+2}}\right) \cos (t \log n)  + O_m(1).
\end{align*}
It is now convenient to split the inner sum in the ranges $k\geq 0$ and $k \leq -2$, and regroup them as
\begin{align*}
 & \mp \frac{1}{\pi}\sum_{n\geq 2}\dfrac{\Lambda(n)}{\sqrt{n}} \,\widehat{g}^{\pm}_{\Delta}\left(\dfrac{\log n}{2\pi}\right) \cos(t \log n)\\
 & = \mp (2m)!\!\!\sum_{n \leq x}\!\dfrac{\Lambda(n)}{\sqrt{n}} \sum_{k=0}^{\infty} (\pm 1)^k\!\left(\!\frac{k+1}{(\log nx^k)^{2m+2} \,(nx^k)^{\alpha - \frac12}} - \frac{k+1}{\big( \log \frac{x^{k+2}}{n}\big)^{2m+2} \big(\frac{x^{k+2}}{n}\big)^{\alpha - \frac12}}\!\right) \cos (t \log n) + O_m(1).
\end{align*} 
Using Appendix \textbf{B.3} and \eqref{Feb02_4:52pm}, we isolate the term $k=0$ and get
\begin{align}\label{Sum_PP_final_inequ}
\begin{split}
  \mp \frac{1}{\pi} & \sum_{n\geq 2}\dfrac{\Lambda(n)}{\sqrt{n}}  \,\widehat{g}^{\pm}_{\Delta}\left(\dfrac{\log n}{2\pi}\right) \cos(t \log n)\\
 & = \mp (2m)! \displaystyle\sum_{n\leq x} \left(\dfrac{\Lambda(n)}{n^\alpha(\log n)^{2m+2}}- \dfrac{\Lambda(n)}{x^{2\alpha-1}\,n^{1-\alpha}(2\log x-\log n)^{2m+2}}\right)\cos (t \log n) \\
& \ \ \ \ \ \ \ \ \ \ \ \ \ \ \ + O_{m,c}\left(\dfrac{x^{1-\alpha}}{(1-\alpha)^{2}(\log x)^{2m+3}}\right).
\end{split}
\end{align}
Observe that the terms
$$\dfrac{\Lambda(n)}{n^\alpha(\log n)^{2m+2}}- \dfrac{\Lambda(n)}{x^{2\alpha-1}\,n^{1-\alpha}(2\log x-\log n)^{2m+2}}$$
are all nonnegative for $n \leq x$, and we can get upper bounds in \eqref{Sum_PP_final_inequ} by just using the trivial inequality
\begin{equation}\label{cos_ineq}
-1 \leq \cos (t \log n) \leq 1.
\end{equation}
Estimate \eqref{Lem8_prime_sum_maj} plainly follows  from \eqref{Sum_PP_final_inequ}, \eqref{cos_ineq} and Appendices \textbf{B.1} and \textbf{B.2}.
\end{proof}

\section{Applying the Guinand-Weil formula}

In this section we prove Theorem \ref{Thm2} in the case of odd $n \geq -1$.

\subsection{The case $n=-1$} Here we keep the notation $\beta = \alpha - \frac12$, with $0 < \beta < \frac12$. To further simplify notation, let $m^{\pm}_{\Delta} = m^{\pm}_{\beta,\Delta}$ be the extremal functions for the Poisson kernel obtained in Lemma \ref{lemma_extr_Poisson}. From Lemma \ref{Rep_lem} and Lemma \ref{lemma_extr_Poisson} we have
\begin{align}\label{Poisson_before_app_GW}
& - \dfrac{1}{2\pi}\log\frac{t}{2\pi} + \dfrac{1}{\pi}\sum_{\gamma}m^{-}_{\Delta}(t-\gamma)+O\bigg(\frac{1}{t}\bigg) \leq S_{-1,\alpha}(t) \leq - \dfrac{1}{2\pi}\log\frac{t}{2\pi} + \dfrac{1}{\pi}\sum_{\gamma}m^{+}_{\Delta}(t-\gamma)+O\bigg(\frac{1}{t}\bigg).
\end{align}
For a fixed $t >0$, we consider the functions $\ell^{\pm}_{\Delta}(z):=m^{\pm}_{\Delta}(t-z)$. Then $\widehat{\ell}^{\pm}_{\Delta}(\xi)=\widehat{m}^{\pm}_{\Delta}(-\xi)e^{-2\pi i\xi t}$ and the condition $|\ell^{\pm}_{\Delta}(s)|\ll(1+|s|)^{-2}$ when $|\re{s}|\to\infty$ in the strip $|\im{s}|\leq 1$ follows from \eqref{decay_square_maj_Poisson}, \eqref{Complex_est_maj}, \eqref{Complex_est_min} and an application of the Phragm\'{e}n-Lindel\"{o}f principle. Recalling that $\widehat{m}^{\pm}_{\Delta}$ are even functions, we apply the Guinand-Weil explicit formula (Lemma \ref{GW}) and find that
\begin{align}\label{GW_applied_to_m}
\begin{split}
\displaystyle\sum_{\gamma}m^{\pm}_{\Delta}(t-\gamma) & = 
\Big\{m^{\pm}_{\Delta}\left(t-\tfrac{1}{2i}\right)+m^{\pm}_{\Delta}\left(t+\tfrac{1}{2i}\right)\Big\}-\dfrac{1}{2\pi}\widehat{m}^{\pm}_{\Delta}(0)\log\pi \\ 
     & \ \ \ \ + \dfrac{1}{2\pi}\int_{-\infty}^{\infty}m^{\pm}_{\Delta}(t-x)\,\re\,\dfrac{\Gamma'}{\Gamma}\bigg(\dfrac{1}{4}+\dfrac{ix}{2}\bigg) \,\dx - \frac{1}{\pi}\sum_{n\geq 2}\dfrac{\Lambda(n)}{\sqrt{n}} \,\widehat{m}^{\pm}_{\Delta}\left(\dfrac{\log n}{2\pi}\right) \cos(t \log n). 
\end{split}
\end{align}
We now proceed with an asymptotic analysis of each term on the right-hand side of \eqref{GW_applied_to_m}.

\subsubsection{First term} From \eqref{Complex_est_maj} and \eqref{Complex_est_min} we see that
\begin{align}\label{AsA_eq1}
\Big|m^{+}_{\Delta}\left(t-\tfrac{1}{2i}\right)+m^{+}_{\Delta}\left(t+\tfrac{1}{2i}\right)\Big| \ll \frac{\Delta^{2}e^{\pi\Delta}}{\beta(1+\Delta t)} 
\end{align}
and
\begin{align}\label{AsA_eq1_min}
\Big|m^{-}_{\Delta}\left(t-\tfrac{1}{2i}\right)+m^{-}_{\Delta}\left(t+\tfrac{1}{2i}\right)\Big| \ll \frac{\beta \Delta^{2}e^{\pi\Delta}}{1+\Delta t} .
\end{align}

\subsubsection{Second term} From \eqref{FT_maj_min_Poisson} it follows that
\begin{equation}\label{conclusion_FT_at_zero_Poisson_maj}
\widehat{m}^{+}_{\Delta}(0) = \pi\left( \frac{e^{\pi \beta \Delta} + e^{-\pi \beta \Delta} }{e^{\pi \beta \Delta} - e^{-\pi \beta \Delta} }\right)\ll \frac{1}{\beta}
\end{equation}
and
\begin{equation}\label{conclusion_FT_at_zero_Poisson_min}
\widehat{m}^{-}_{\Delta}(0) = \pi\left( \frac{e^{\pi \beta \Delta} - e^{-\pi \beta \Delta} }{e^{\pi \beta \Delta} + e^{-\pi \beta \Delta} }\right) \ll \min\{1, \beta \Delta\}.
\end{equation}

\subsubsection{Third term} Recall that the Poisson kernel $h_{\beta}(x) = \frac{\beta}{\beta^2 + x^2}$ defined in \eqref{Def_Poisson_kernel_beta} satisfies $\int_{-\infty}^{\infty} h_{\beta}(x)\,\dx=\pi $. Note also that for $0 < \beta \leq \frac12$ and $|x| \geq 1$ we have
\begin{equation}\label{h beta}
h_{\beta}(x) = \frac{\beta}{\beta^2 + x^2} \leq \frac{1}{1 + x^2}.
\end{equation}
Hence, from \eqref{decay_square_maj_Poisson}, we get
\begin{align}\label{aux_pf_Poisson_eq1}
\begin{split}
0 & \leq \int_{-\infty}^{\infty}m^{-}_{\Delta}(x)\,\log(2+|x|) \,\dx \\
& \leq \int_{-\infty}^{\infty} h_{\beta}(x) \log (2 + |x|)\,\dx  = \int_{-1}^{1}h_{\beta}(x) \log (2 + |x|) \,\dx + \int_{|x| \ge 1}h_{\beta}(x) \log (2 + |x|) \,\dx = O(1).
\end{split} 
\end{align}
From \eqref{conclusion_FT_at_zero_Poisson_min}, \eqref{h beta},  \eqref{aux_pf_Poisson_eq1}, and Stirling's formula it follows that
\begin{align}\label{Conclusion_poisson_log_sum_min}
\begin{split}
\dfrac{1}{2\pi}\int_{-\infty}^{\infty}m^{-}_{\Delta}(t-x)\,\re\,\dfrac{\Gamma'}{\Gamma}\bigg(\dfrac{1}{4}+\dfrac{ix}{2}\bigg) \,\dx  &= \dfrac{1}{2\pi}\int_{-\infty}^{\infty}m^{-}_{\Delta}(x)\big(\log t + O(\log(2+|x|))\big) \,\dx \\
& = \dfrac{\log t}{2} \left( \frac{e^{\pi \beta \Delta} - e^{-\pi \beta \Delta} }{e^{\pi \beta \Delta} + e^{-\pi \beta \Delta} }\right) + O(1).  \\
\end{split}
\end{align}
Similarly, using \eqref{decay_square_maj_Poisson} and \eqref{conclusion_FT_at_zero_Poisson_maj}, we have
\begin{align}\label{Conclusion_poisson_log_sum_maj}
\begin{split}
\dfrac{1}{2\pi}\int_{-\infty}^{\infty}m^{+}_{\Delta}(t-x)\,\re\,\dfrac{\Gamma'}{\Gamma}\bigg(\dfrac{1}{4}+\dfrac{ix}{2}\bigg) \,\dx  &= \dfrac{1}{2\pi}\int_{-\infty}^{\infty}m^{+}_{\Delta}(x)\big(\log t + O(\log(2+|x|))) \,\dx \\
& = \dfrac{\log t}{2} \left( \frac{e^{\pi \beta \Delta} + e^{-\pi \beta \Delta} }{e^{\pi \beta \Delta} - e^{-\pi \beta \Delta} }\right) + O\big(\tfrac{1}{\beta}\big).  \\
\end{split}
\end{align}

\subsubsection{Fourth term} This term was treated in Lemma \ref{Lem_SOPP_I}.

\subsubsection{Conclusion \textup{(}lower bound\textup{)}} Combining the estimates \eqref{Poisson_before_app_GW}, \eqref{GW_applied_to_m}, \eqref{AsA_eq1_min}, \eqref{conclusion_FT_at_zero_Poisson_min}, \eqref{Conclusion_poisson_log_sum_min}, and \eqref{Lem8_prime_sum_maj_Poisson_NEW_2} we derive that
\begin{align}\label{Con_Poisson_min_eq1}
\begin{split}
S_{-1,\alpha}(t) & \geq - \left[ \frac{\log t}{\pi} \left(\frac{e^{-2 \pi \beta \Delta}}{1 + e^{-2\pi \beta \Delta}}\right) + \dfrac{2\beta\,e^{(1-2\beta)\pi\Delta} - 2^{\frac{1}{2}-\beta} \big(\frac12 + \beta\big)^2 + 2^{\frac{1}{2}+\beta}e^{-4\pi\beta\Delta} \big(\frac12 - \beta\big)^2}{\pi\big(\frac{1}{4}-\beta^2\big)\big(1+ e^{-2\pi\beta\Delta}\big)^2}\right] \\
 & \ \ \ \ \ \ \ \ \ \ \ \ \ \ \ \ + O \left(\frac{\beta \Delta^{2}e^{\pi\Delta}}{1+\Delta t}\right) + O\big(\min\{1,\beta\Delta\}\big) + O\big(\beta \Delta^4\big).
 \end{split}
\end{align}
Note that in deducing \eqref{Con_Poisson_min_eq1}, the term $-(1/2\pi)\log t$ in \eqref{Poisson_before_app_GW} cancels with part of the leading term in \eqref{Conclusion_poisson_log_sum_min}. We now choose $\pi \Delta = \log \log t$ in \eqref{Con_Poisson_min_eq1}, which is essentially the optimal choice. Recalling that $\beta = \alpha - \frac12$, this choice yields
\begin{align}\label{Feb02_2:30pm}
\begin{split}
\!\!S_{-1,\alpha}(t) & \geq - \frac{(\log t)^{2 - 2 \alpha}}{\pi} \!\left(\! \frac{1}{\big( 1 \!+\! (\log t)^{1 - 2 \alpha}\big)} \!+\!  \frac{(2\alpha -1)}{\alpha (1- \alpha)\big( 1 \!+\! (\log t)^{1 - 2 \alpha}\big)^2}\!\right) \\
&  \ \ \ \ \ \ \ \  + \frac{2^{1-\alpha}\,\alpha^2 - 2^{\alpha}\,(1-\alpha)^2\,(\log t)^{2 -4\alpha}}{\pi \alpha (1- \alpha)\big( 1 \!+\! (\log t)^{1 - 2 \alpha}\big)^2} + O\big((\alpha - \tfrac12)(\log \log t)^4\big).\\
&  \geq - \frac{(\log t)^{2 - 2 \alpha}}{\pi} \!\left(\! \frac{1}{\big( 1 \!+\! (\log t)^{1 - 2 \alpha}\big)} \!+\!  \frac{(2\alpha -1)}{\alpha (1- \alpha)}\!\right) + O\big((\alpha - \tfrac12)(\log \log t)^4\big).
 \end{split}
\end{align}
In the last inequality we only dismissed nonnegative terms. Note the fact that $2^{1-\alpha}\,\alpha^2 \geq  2^{\alpha}\,(1-\alpha)^2$, for $\tfrac12 \leq \alpha \leq 1$. Finally, notice that in the range \eqref{Range_Ess} we may use \eqref{Feb02_4:52pm} to transform the error term of \eqref{Feb02_2:30pm} into the error term on the left-hand side of \eqref{Fev02_3:09pm}.

\subsubsection{Conclusion \textup{(}upper bound\textup{)}} Combining the estimates \eqref{Poisson_before_app_GW}, \eqref{GW_applied_to_m}, \eqref{AsA_eq1}, \eqref{conclusion_FT_at_zero_Poisson_maj}, \eqref{Conclusion_poisson_log_sum_maj}, and \eqref{Lem8_prime_sum_maj_Poisson_NEW} we derive that

\begin{align}\label{Con_Poisson_maj_eq1}
\begin{split}
S_{-1,\alpha}(t) & \leq \left[ \frac{\log t}{\pi} \left(\frac{e^{-2 \pi \beta \Delta}}{1 - e^{-2\pi \beta \Delta}}\right) + \dfrac{2\beta\,e^{(1-2\beta)\pi\Delta} - 2^{\frac{1}{2}-\beta} \big(\frac12 + \beta\big)^2 + 2^{\frac{1}{2}+\beta}e^{-4\pi\beta\Delta} \big(\frac12 - \beta\big)^2}{\pi\big(\frac{1}{4}-\beta^2\big)\big(1- e^{-2\pi\beta\Delta}\big)^2}\right] \\
 & \ \ \ \ \ \ \ \ \ \ \ \ \ \ \ \ + O \left(\frac{\Delta^{2}e^{\pi\Delta}}{\beta(1+\Delta t)}\right) + O\left(\frac{1}{\beta}\right) + O\left(\frac{\Delta^4}{\beta}\right).
 \end{split}
\end{align}
We now choose $\pi \Delta = \log \log t$ in \eqref{Con_Poisson_maj_eq1}, which again is essentially the optimal choice. Recalling that $\beta = \alpha - \frac12$, this yields
\begin{align}\label{Fev02_4:00pm}
\begin{split}
S_{-1,\alpha}(t) & \leq \frac{(\log t)^{2 - 2 \alpha}}{\pi} \left( \frac{1}{\big( 1 - (\log t)^{1 - 2 \alpha}\big)} +  \frac{(2\alpha -1)}{\alpha (1- \alpha)\big( 1 - (\log t)^{1 - 2 \alpha}\big)^2}\right) \\
&\ \ \ \ \ \ \ \  - \frac{\big(2^{1-\alpha}\,\alpha^2 - 2^{\alpha}\,(1-\alpha)^2\,(\log t)^{2 -4\alpha}\big)}{\pi \alpha (1- \alpha)\big( 1 \!-\! (\log t)^{1 - 2 \alpha}\big)^2} + O\left(\frac{(\log \log t)^4}{\alpha - \tfrac12}\right)\\
& \leq  \frac{(\log t)^{2 - 2 \alpha}}{\pi} \left( \frac{1}{\big( 1 - (\log t)^{1 - 2 \alpha}\big)} +  \frac{(2\alpha -1)}{\alpha (1- \alpha)\big( 1 - (\log t)^{1 - 2 \alpha}\big)^2}\right)  + O\left(\frac{(\log \log t)^4}{\alpha - \tfrac12}\right)\,,
 \end{split}
\end{align}
where we have just dismissed a nonpositive term in the last inequality. Observe that  
\begin{equation*}
\left|1 -  \frac{1}{\big( 1 - (\log t)^{1 - 2 \alpha}\big)^2}\right| \ll \frac{(\log t)^{1 - 2 \alpha}}{\big( 1 - (\log t)^{1 - 2 \alpha}\big)^2} \ll \frac{1}{(\alpha - \frac12)^2 (\log \log t)^2} \ll \frac{1}{(\alpha - \frac12)^2 (\log \log t)}.
\end{equation*}
Therefore we can rewrite \eqref{Fev02_4:00pm} as 
\begin{align}\label{New_1017}
\begin{split}
S_{-1,\alpha}(t) & \leq \frac{(\log t)^{2 - 2 \alpha}}{\pi} \!\left( \frac{1}{\big( 1 - (\log t)^{1 - 2 \alpha}\big)} +  \frac{(2\alpha -1)}{\alpha (1- \alpha)}\right) \\
&  \ \ \ \ \ \ \ \ \ \ \ \ \ \ \ \ \ + O\left(\!\dfrac{(\log t)^{2 - 2 \alpha}}{(\alpha - \frac12)(1 - \alpha)\log \log t}\!\right) + O\left(\frac{(\log \log t)^4}{\alpha - \tfrac12}\right).
 \end{split}
\end{align}
Again, in the range \eqref{Range_Ess} we may use \eqref{Feb02_4:52pm} to transform the error term of \eqref{New_1017} into the error term on the right-hand side of \eqref{Fev02_3:09pm}. This concludes the proof of the theorem in this case.

\subsection{The case $n \geq1$} Let $n = 2m +1$, with $m\geq 0$. For $\hh \leq \alpha < 1$ and $\Delta \geq 1$, let $g^{\pm}_{\Delta} = g^{\pm}_{2m+1, \alpha,\Delta}$ be  the extremal functions for $f_{2m+1,\alpha}$ obtained in Lemma \ref{lema_extremal_f_2m+1_alpha}.

\smallskip

Let us first consider the case where $m$ is even. In this case, from Lemma \ref{Rep_lem} and Lemma \ref{lema_extremal_f_2m+1_alpha} we have
\begin{align} \label{Est_final_S_n_odd_general}
\begin{split}
&\dfrac{1}{2\pi(2m+2)!}\left(\tfrac{3}{2}-\alpha\right)^{2m+2}\log t-
\dfrac{1}{\pi(2m)!} \sum_{\gamma} g^{+}_{\Delta}(t-\gamma) + O_m(1) \leq S_{2m+1,\alpha}(t) \\
&\ \ \ \ \ \ \ \ \ \ \ \ \ \leq \dfrac{1}{2\pi(2m+2)!}\left(\tfrac{3}{2}-\alpha\right)^{2m+2}\log t-
\dfrac{1}{\pi(2m)!} \sum_{\gamma} g^{-}_{\Delta}(t-\gamma) + O_m(1).
\end{split}
\end{align}
As observed in the case of the majorants for the Poisson kernel, it follows from \eqref{bound_g_2m+1_real}, \eqref{bound_g_2m+1_complex} and the Phragm\'{e}n-Lindel\"{o}f principle that we can then apply the Guinand-Weil explicit formula (Lemma \ref{GW}) to the functions $z\mapsto g^{\pm}_{\Delta}(t-z)$. This yields
\begin{align}\label{GW_applied_to_g}
\begin{split}
\displaystyle\sum_{\gamma}g^{\pm}_{\Delta}(t-\gamma) & = 
\Big\{g^{\pm}_{\Delta}\left(t-\tfrac{1}{2i}\right)+g^{\pm}_{\Delta}\left(t+\tfrac{1}{2i}\right)\Big\}-\dfrac{1}{2\pi}\widehat{g}^{\pm}_{\Delta}(0)\log\pi \\ 
     & \ \ \ \ + \dfrac{1}{2\pi}\int_{-\infty}^{\infty}g^{\pm}_{\Delta}(t-x)\,\re\,\dfrac{\Gamma'}{\Gamma}\bigg(\dfrac{1}{4}+\dfrac{ix}{2}\bigg) \,\dx - \frac{1}{\pi}\sum_{n\geq 2}\dfrac{\Lambda(n)}{\sqrt{n}} \,\widehat{g}^{\pm}_{\Delta}\left(\dfrac{\log n}{2\pi}\right) \cos(t \log n). 
\end{split}
\end{align}
We again proceed with an asymptotic analysis of each of the terms in the last expression.

\subsubsection{First term} The estimate \eqref{bound_g_2m+1_complex} implies that
\begin{align}\label{AsA_eq1_g}
\Big|g^{\pm}_{\Delta}\left(t-\tfrac{1}{2i}\right)+g^{\pm}_{\Delta}\left(t+\tfrac{1}{2i}\right)\Big| \ll_m \frac{\Delta^{2}e^{\pi\Delta}}{1+\Delta t}.
\end{align}
\subsubsection{Second term} From \eqref{Feb23_3:4pm}, it follows that
\begin{align}\label{FT_g_prep_final_comp}
\big|\widehat{g}^{\pm}_{\Delta}(0)\big|\ll_m 1.
\end{align}
\subsubsection{Third term} Using \eqref{bound_g_2m+1_real},  \eqref{Lem8_FTg+_zero}, and Stirling's formula we find that
\begin{align}\label{Conclusion_poisson_log_sum_g}
\begin{split}
& \dfrac{1}{2\pi}\int_{-\infty}^{\infty}g^{\pm}_{\Delta}(t-x)\,\re\,\dfrac{\Gamma'}{\Gamma}\bigg(\dfrac{1}{4}+\dfrac{ix}{2}\bigg) \,\dx  = \dfrac{1}{2\pi}\int_{-\infty}^{\infty}g^{\pm}_{\Delta}(x)\big(\log t + O(\log(2+|x|))\big) \,\dx \\
& = \dfrac{\log t}{2\pi} \left( \dfrac{\pi \left(\tfrac{3}{2}-\alpha\right)^{2m+2}}{(2m+1)(2m+2)}-\dfrac{1}{\Delta}\int_{\alpha}^{3/2}(\sigma-\alpha)^{2m}\,\log\left(\dfrac{1\mp e^{-2\pi(\sigma-\frac12)\Delta}}{1\mp e^{-2\pi\Delta}}\right)\d\sigma\right) + O_m(1).  \\
\end{split}
\end{align}

\subsubsection{Fourth term} This term was treated in Lemma \ref{Lem_SOPP_II}.

\subsubsection{Conclusion \textup{(}lower bound\textup{)}} We combine the leftmost inequality in \eqref{Est_final_S_n_odd_general} with estimates \eqref{GW_applied_to_g}, \eqref{AsA_eq1_g}, \eqref{FT_g_prep_final_comp}, \eqref{Conclusion_poisson_log_sum_g}, and \eqref{Lem8_prime_sum_maj} to get
\begin{align}\label{Conc_caso_geral_n_odd_eq1}
\begin{split}
S_{2m+1,\alpha}(t) & \geq \dfrac{\log t}{(2m)! \, 2\pi^2 \Delta } \int_{\alpha}^{3/2}(\sigma-\alpha)^{2m}\,\log\left(\dfrac{1- e^{-2\pi(\sigma-\frac12)\Delta}}{1- e^{-2\pi\Delta}}\right)\d\sigma - \dfrac{(2\alpha-1)}{\pi\alpha(1-\alpha)}\dfrac{e^{(2-2\alpha)\pi\Delta}}{(2\pi\Delta)^{2m+2}} \\
& \ \ \ \ \ \ \ \ \ \ \ \ \   + O_m(1) + O_m\left(\frac{\Delta^{2}e^{\pi\Delta}}{1+\Delta t}\right) + O_{m,c}\left(\dfrac{e^{(2-2\alpha)\pi\Delta}}{(1-\alpha)^{2}\Delta^{2m+3}}\right)\\
& \geq \dfrac{\log t}{(2m)! \, 2\pi^2 \Delta } \int_{\alpha}^{3/2}(\sigma-\alpha)^{2m}\,\log\big(1- e^{-2\pi(\sigma-\frac12)\Delta}\big)\,\d\sigma - \dfrac{(2\alpha-1)}{\pi\alpha(1-\alpha)}\dfrac{e^{(2-2\alpha)\pi\Delta}}{(2\pi\Delta)^{2m+2}} \\
& \ \ \ \ \ \ \ \ \ \ \ \ \   + O_m(1) + O_m\left(\frac{\Delta^{2}e^{\pi\Delta}}{1+\Delta t}\right) + O_{m,c}\left(\dfrac{e^{(2-2\alpha)\pi\Delta}}{(1-\alpha)^{2}\Delta^{2m+3}}\right).
\end{split}
\end{align}
Observe that 
\begin{align} \label{usar2}
\begin{split}
\left|\int_{3/2}^{\infty}(\sigma-\alpha)^{2m}\,\log\big(1\pm e^{-2\pi(\sigma-\frac12)\Delta}\big)\,\d\sigma\right|  & \ll  \int_{3/2}^{\infty}(\sigma-\hh)^{2m}\,e^{-2\pi(\sigma-\frac12)\Delta}\,\d\sigma  =  \int_{1}^{\infty}\sigma^{2m}e^{-2\sigma\pi\Delta}\,\d\sigma  \\
& \ll_m\dfrac{e^{-\pi\Delta}}{\Delta^{2m+2}} \leq\dfrac{e^{(1-2\alpha)\pi\Delta}}{\Delta^{2m+2}}.
\end{split} 
\end{align}
We now choose $\pi \Delta = \log \log t$. Using \eqref{usar2} and \eqref{Feb02_4:52pm} in \eqref{Conc_caso_geral_n_odd_eq1} leads us to
\begin{align}\label{Jan_26_6:37}
\begin{split}
S_{2m+1,\alpha}(t) &  \geq \dfrac{\log t}{(2m)! \, 2\pi^2 \Delta } \int_{\alpha}^{\infty}(\sigma-\alpha)^{2m}\,\log\big(1- e^{-2\pi(\sigma-\frac12)\Delta}\big)\,\d\sigma - \dfrac{(2\alpha-1)}{\pi\alpha(1-\alpha)}\dfrac{e^{(2-2\alpha)\pi\Delta}}{(2\pi\Delta)^{2m+2}} \\
& \ \ \ \ \ \ \ \ \ \ \ \ \   + O_{m,c}\left(\dfrac{e^{(2-2\alpha)\pi\Delta}}{(1-\alpha)^{2}\Delta^{2m+3}}\right).
\end{split}
\end{align}
From monotone convergence and \eqref{Int_GR_exp_poli} we have
\begin{align}\label{Jan_26_6:38}
\begin{split}
 \int_{\alpha}^{\infty}(\sigma-\alpha)^{2m}&\,\log\big(1- e^{-2\pi(\sigma-\frac12)\Delta}\big)\,\d\sigma  = - \int_{\alpha}^{\infty}(\sigma-\alpha)^{2m}\,\left(\sum_{k=1}^{\infty} \frac{e^{-2k\pi(\sigma-\frac12)\Delta}}{k}\right) \,\d\sigma\\
& = - \sum_{k=1}^{\infty} \frac{1}{k} \int_{\alpha}^{\infty}(\sigma-\alpha)^{2m}\,e^{-2k\pi(\sigma-\frac12)\Delta}\,\d\sigma\\
& = - \frac{(2m)!}{(2\pi\Delta)^{2m+1}}\sum_{k=1}^{\infty} \frac{e^{-2k\pi (\alpha - \frac12)\Delta}}{k^{2m+2}}.
\end{split}
\end{align}
Plugging \eqref{Jan_26_6:38} into \eqref{Jan_26_6:37} leads us to
\begin{align*}
S_{2m+1,\alpha}(t) &\geq  -\left(\frac{1}{2^{2m+2}  \,\pi} \right)\frac{ (\log t)^{2 - 2\alpha}}{(\log \log t)^{2m+2}} \left[ \sum_{k=0}^{\infty} \frac{1}{(k+1)^{2m+2}(\log t)^{(2\alpha -1)k}} + \frac{2\alpha -1}{\alpha(1 - \alpha)} \right] \\
& \ \ \ \ \ \ \ \ \ \ \ \ \   + O_{m,c}\left(\frac{ (\log t)^{2 - 2\alpha}}{(1-\alpha)^2\,(\log \log t)^{2m+3}}\right).
\end{align*}

\subsubsection{Conclusion \textup{(}upper bound\textup{)}} We combine the rightmost inequality in \eqref{Est_final_S_n_odd_general} with estimates \eqref{GW_applied_to_g}, \eqref{AsA_eq1_g}, \eqref{FT_g_prep_final_comp}, \eqref{Conclusion_poisson_log_sum_g}, and \eqref{Lem8_prime_sum_maj} to get
\begin{align}\label{Jan_26_7:03}
S_{2m+1,\alpha}(t) & \leq \dfrac{\log t}{(2m)! \, 2\pi^2 \Delta } \int_{\alpha}^{3/2}(\sigma-\alpha)^{2m}\,\log\left(\dfrac{1+ e^{-2\pi(\sigma-\frac12)\Delta}}{1+ e^{-2\pi\Delta}}\right)\d\sigma + \dfrac{(2\alpha-1)}{\pi\alpha(1-\alpha)}\dfrac{e^{(2-2\alpha)\pi\Delta}}{(2\pi\Delta)^{2m+2}} \nonumber \\
& \ \ \ \ \ \ \ \ \ \ \ \ \   + O_m(1) + O_m\left(\frac{\Delta^{2}e^{\pi\Delta}}{1+\Delta t}\right) + O_{m,c}\left(\dfrac{e^{(2-2\alpha)\pi\Delta}}{(1-\alpha)^{2}\Delta^{2m+3}}\right)\\
& \leq \dfrac{\log t}{(2m)! \, 2\pi^2 \Delta } \int_{\alpha}^{3/2}(\sigma-\alpha)^{2m}\,\log\big(1+ e^{-2\pi(\sigma-\frac12)\Delta}\big)\,\d\sigma + \dfrac{(2\alpha-1)}{\pi\alpha(1-\alpha)}\dfrac{e^{(2-2\alpha)\pi\Delta}}{(2\pi\Delta)^{2m+2}} \nonumber \\
& \ \ \ \ \ \ \ \ \ \ \ \ \   + O_m(1) + O_m\left(\frac{\Delta^{2}e^{\pi\Delta}}{1+\Delta t}\right) + O_{m,c}\left(\dfrac{e^{(2-2\alpha)\pi\Delta}}{(1-\alpha)^{2}\Delta^{2m+3}}\right).\nonumber
\end{align}
We now choose $\pi \Delta = \log \log t$. Using \eqref{usar2} and \eqref{Feb02_4:52pm} in \eqref{Jan_26_7:03} leads us to
\begin{align}\label{Jan_26_7:09}
\begin{split}
S_{2m+1,\alpha}(t) &  \leq \dfrac{\log t}{(2m)! \, 2\pi^2 \Delta } \int_{\alpha}^{\infty}(\sigma-\alpha)^{2m}\,\log\big(1+ e^{-2\pi(\sigma-\frac12)\Delta}\big)\,\d\sigma + \dfrac{(2\alpha-1)}{\pi\alpha(1-\alpha)}\dfrac{e^{(2-2\alpha)\pi\Delta}}{(2\pi\Delta)^{2m+2}} \\
& \ \ \ \ \ \ \ \ \ \ \ \ \   + O_{m,c}\left(\dfrac{e^{(2-2\alpha)\pi\Delta}}{(1-\alpha)^{2}\Delta^{2m+3}}\right).
\end{split}
\end{align}
As in \eqref{Jan_26_6:38}, now using dominated convergence, we have
\begin{align}\label{Jan_26_7:17}
 \int_{\alpha}^{\infty}(\sigma-\alpha)^{2m}\,\log\big(1+ e^{-2\pi(\sigma-\frac12)\Delta}\big)\,\d\sigma  =   \frac{(2m)!}{(2\pi\Delta)^{2m+1}}\sum_{k=1}^{\infty} \frac{(-1)^{k+1}\,e^{-2k\pi (\alpha - \frac12)\Delta}}{k^{2m+2}}.
\end{align}
Finally, plugging \eqref{Jan_26_7:17} into \eqref{Jan_26_7:09} gives us
\begin{align*}
\begin{split}
S_{2m+1,\alpha}(t) &\leq  \left(\frac{1}{2^{2m+2}  \,\pi} \right)\frac{ (\log t)^{2 - 2\alpha}}{(\log \log t)^{2m+2}} \left[ \sum_{k=0}^{\infty} \frac{(-1)^k}{(k+1)^{2m+2}(\log t)^{(2\alpha -1)k}} + \frac{2\alpha -1}{\alpha(1 - \alpha)} \right] \\
& \ \ \ \ \ \ \ \ \ \ \ \ \   + O_{m,c}\left(\frac{ (\log t)^{2 - 2\alpha}}{(1-\alpha)^2\,(\log \log t)^{2m+3}}\right).
\end{split}
\end{align*}

\subsubsection{Case of $m$ odd} \label{Sec5.2.7}
In the case of $m$ odd, the roles of the majorant $g_{\Delta}^+$ and minorant $g_{\Delta}^-$ must be interchanged due to the presence of the factor $(-1)^m$ in the representation lemma \eqref{Lem2_eq_representation_odd}. The remaining computations are exactly the same as in the case of $m$ even.

\smallskip

This concludes the proof of Theorem \ref{Thm2} in the case of odd $n$.

\section{Interpolation} \label{Sec_Int}
In this section we prove Theorem \ref{Thm2} in the case of even $n \geq 0$. Recall that for integer $j\geq 0$ we have defined 
$$H_j(x) =  \sum_{k=0}^{\infty} \frac{x^k}{(k+1)^j}\,,$$
and {\it for odd $n\geq -1$} we have defined 
\begin{equation}\label{Feb07_12:58pm}
C_{n,\alpha}^{\pm}(t) = \frac{1}{2^{n+1}\pi}\left( H_{n+1}\Big(\pm (-1)^{(n+1)/2}\,(\log t)^{1 - 2\alpha}\Big) + \frac{2\alpha -1}{\alpha (1- \alpha)}\right).
\end{equation}
Throughout this section let us write 
\[
\ell_{n,\alpha}(t):=\dfrac{(\log t)^{2-2\alpha}}{(\log\log t)^{n}} \quad \text{and} \quad  r_{n,\alpha}(t):=\dfrac{(\log t)^{2-2\alpha}}{(1-\alpha)^2(\log\log t)^{n}}. 
\]
\subsection{The case $n$ even with $n\geq 2$} \label{Subsec_not_Poisson_Feb01_11:31am}
Let $\frac12 \leq \alpha < 1$. In this subsection we show how to obtain the bounds for $S_{n,\alpha}(t)$ from the corresponding bounds for $S_{n-1,\alpha}(t)$ and $S_{n+1,\alpha}(t)$. This interpolation argument explores the smoothness of these functions via the mean value theorem in an optimal way. This extends the material that previously appeared in \cite[Section 4]{CChi}.

\smallskip

Let us consider here the case of {\it $n/2$ odd}. The case of {\it $n/2$ even} follows the exact same outline, with the roles of $C_{n,\alpha}^{+}(t)$ and $C_{n,\alpha}^{-}(t)$ interchanged. 

\smallskip 

Let $c >0$ be a given real number. In the region $(1-\alpha)^{2} \geq \frac{c/2}{\log\log t}$ we have already established that
\begin{align}  \label{interpol_2}
-C_{n+1,\alpha}^-(t)\,\ell_{n+2,\alpha}(t)+O_{n,c}(r_{n+3,\alpha}(t))\leq S_{n+1,\alpha}(t)\leq C_{n+1,\alpha}^+(t)\,\ell_{n+2,\alpha}(t)+O_{n,c}(r_{n+3,\alpha}(t)),
\end{align}
and 
\begin{align} \label{interpol_1}
-C_{n-1,\alpha}^-(t)\,\ell_{n,\alpha}(t)+O_{n,c}(r_{n+1,\alpha}(t))\leq S_{n-1,\alpha}(t)\leq C_{n-1,\alpha}^+(t)\,\ell_{n,\alpha}(t)+O_{n,c}(r_{n+1,\alpha}(t)).
\end{align}

\subsubsection{Error term estimates} Let $(\alpha,t)$ be such that $(1-\alpha)^{2}\geq\frac{c}{\log\log t}$. Observe that, in the set $\{(\alpha,\mu); \,t-1\leq\mu\leq t+1\}$, estimates \eqref{interpol_2} and \eqref{interpol_1} apply (note the use of $c/2$ instead of $c$ in the domains of these estimates). Then, by the mean value theorem and \eqref{interpol_1} we obtain, for $-1 \leq h \leq 1$, 
\begin{align}
\begin{split} \label{prep_integral_even}
S_{n,\alpha}(t) - &S_{n,\alpha}(t-h)  = h \, S_{n-1,\alpha}(t_h^*) \\
& \leq \big(\chi_{h>0}\,|h|\,C_{n-1,\alpha}^+(t_h^*)\,\ell_{n,\alpha}(t_h^*)+\chi_{h<0}\,|h|\,C_{n-1,\alpha}^-(t_h^*)\,\ell_{n,\alpha}(t_h^*)\big)+|h|\,O_{n,c}(r_{n+1,\alpha}(t_h^*))\\
& = \big(\chi_{h>0}\,|h|\,C_{n-1,\alpha}^+(t_h^*)\,\ell_{n,\alpha}(t_h^*)+\chi_{h<0}\,|h|\,C_{n-1,\alpha}^-(t_h^*)\,\ell_{n,\alpha}(t_h^*)\big)+|h|\,O_{n,c}(r_{n+1,\alpha}(t))\,,
\end{split}
\end{align}
where $t^*_h$ is a suitable point in the segment connecting $t-h$ and $t$, and $\chi_{h>0}$ and $\chi_{h<0}$ are the indicator functions of the sets $\{h \in \R; \,h>0\}$ and $\{h \in \R; \,h<0\}$, respectively. We would like to change $t_h^*$ by $t$ in the last line of \eqref{prep_integral_even}. For all $k\geq 0$ let us define
$$f_{k}(t)=\dfrac{1}{(\log t)^{(2\alpha-1)k}}\dfrac{(\log t)^{2-2\alpha}}{(\log\log t)^n}=\dfrac{(\log t)^{(k+1)(1-2\alpha)+1}}{(\log\log t)^n}.$$
We shall prove that 
\begin{equation}\label{Jan_29_12:03}
\big|C_{n-1,\alpha}^-(t_h^*)\,\ell_{n,\alpha}(t_h^*)-C_{n-1,\alpha}^-(t)\,\ell_{n,\alpha}(t)\big|\ll_n r_{n+1,\alpha}(t).
\end{equation}
Using the mean value theorem, we have that
\begin{align}\label{Jan_29_12:01}
\begin{split} 
\big|C_{n-1,\alpha}^-(t_h^*)\,\ell_{n,\alpha}(t_h^*)-C_{n-1,\alpha}^-(t)&\,\ell_{n,\alpha}(t)\big|\ll_n \frac{1}{(1 - \alpha)} \displaystyle\sum_{k=0}^{\infty}\dfrac{1}{(k+1)^n}\big|f_{k}(t_h^*)-f_{k}(t)\big|\\
&=  \frac{1}{(1 - \alpha)} |t_h^{*}-t|\displaystyle\sum_{k=0}^{\infty}\dfrac{1}{(k+1)^n}\big|f^{'}_{k}(t_{h,k}^{*})\big| \\
& \ll_n \frac{1}{(1 - \alpha)}  \displaystyle\sum_{k=0}^{\infty}\dfrac{((k+1)(2\alpha-1)+1)}{(k+1)^n\,t_{h,k}^{*}\,(\log t_{h,k}^{*}\,)^{(k+1)(2\alpha-1)}(\log\log t_{h,k}^{*})^n},
\end{split}
\end{align}
where, for each $k \geq 0$, $t_{h,k}^{*}$ is a point that belongs to the segment connecting $t_{h}^{*}$ and $t$. Observe now that 
\begin{align}\label{Jan_29_12:02}
\displaystyle\sum_{k=0}^{\infty} & \dfrac{((k+1)(2\alpha-1)+1)}{(k+1)^n\,t_{h,k}^{*}\,(\log t_{h,k}^{*}\,)^{(k+1)(2\alpha-1)}(\log\log t_{h,k}^{*})^n}  \ll_n \displaystyle\sum_{k=0}^{\infty}\dfrac{((k+1)(2\alpha-1)+1)}{(k+1)^n\,t\,(\log t_{h,k}^{*}\,)^{(k+1)(2\alpha-1)}(\log\log t)^n} \nonumber \\
&  \ \ \ \ \ \ \ \ \ \ \ \ \ \ \ \ \ \  \ \ \ \ \  \ll\frac{1}{t} \left[\displaystyle\sum_{k=0}^{\infty}\dfrac{2\alpha-1}{(k+1)^{n-1}(\log(t-1))^{(k+1)(2\alpha-1)}}\right] + \frac{1}{t}\\
&  \ \ \ \ \ \ \ \ \ \ \ \ \ \ \ \ \ \  \ \ \ \ \ \ll\frac{1}{t} \ \ll \  \ell_{n+1,\alpha}(t).  \nonumber 
\end{align}
From \eqref{Jan_29_12:01} and \eqref{Jan_29_12:02}, we arrive at \eqref{Jan_29_12:03}. In a similar way we observe that
\begin{equation}\label{Jan_30_12:07}
\big|C_{n-1,\alpha}^+(t_h^*)\,\ell_{n,\alpha}(t_h^*)-C_{n-1,\alpha}^+(t)\,\ell_{n,\alpha}(t)\big|\ll_n r_{n+1,\alpha}(t).
\end{equation}
From \eqref{prep_integral_even}, \eqref{Jan_29_12:03}, and \eqref{Jan_30_12:07} we obtain
\begin{align}\label{Jan_30_12:21}
\begin{split}
S_{n,\alpha}(t) - &S_{n,\alpha}(t-h)  \leq \big(\chi_{h>0}\,|h|\,C_{n-1,\alpha}^+(t)\,\ell_{n,\alpha}(t)+\chi_{h<0}\,|h|\,C_{n-1,\alpha}^-(t)\,\ell_{n,\alpha}(t)\big)+|h|\,O_{n,c}(r_{n+1,\alpha}(t)).
\end{split}
\end{align}

\subsubsection{Integrating and optimizing} Let $a:=a_{n,\alpha}(t)$ and $b:=b_{n,\alpha}(t)$ be real-valued functions, that shall be properly chosen later, satisfying $0 \leq a,b \leq 1$. In particular, we will be able to choose them in a way that $a + b = 1$ at the end. Let us just assume for now that $a + b \geq 1$ in the following argument. Let $\nu = \nu_{n,\alpha}(t)$ be a real-valued function such that $0 < \nu \leq 1$. For a fixed $t$, we integrate \eqref{Jan_30_12:21} with respect to the variable $h$ and find that
\begin{align*}
S_{n,\alpha}(t) & \leq \frac{1}{(a+b)\nu} \int_{-a\nu}^{b\nu} S_{n,\alpha}(t-h)\,\d h \\
&\ \ \ \ \ \ \ \ \ \ \ \ \  + \frac{1}{(a+b)\nu} \left[ \int_{-a\nu}^{b\nu}\big(\chi_{h>0}\,|h|\,C_{n-1,\alpha}^+(t)+\chi_{h<0}\,|h|\,C_{n-1,\alpha}^-(t)\big)\,\d h\right] \ell_{n,\alpha}(t)\\
&  \ \ \ \ \ \ \ \ \ \ \ \ \ \ \  \ \ \ \ \ \ + \frac{1}{(a+b)\nu} \left[\int_{-a\nu}^{b\nu} |h|\,\d h \right] O_{n,c}(r_{n+1,\alpha}(t))\\
& = \frac{1}{(a+b)\nu} \Big[ S_{n+1,\alpha}(t + a\nu) - S_{n+1,\alpha}(t - b\nu)\Big] \\
& \ \ \ \ \ \ \ \ \ \ \ \ \  + \left[\dfrac{b^2C_{n-1,\alpha}^+(t) + a^{2}C_{n-1,\alpha}^-(t)}{2(a+b)}\right]\nu \,\ell_{n,\alpha}(t) + O_{n,c}(\nu\,r_{n+1,\alpha}(t)).
\end{align*}
Using \eqref{interpol_2} and the same error term estimates as in \eqref{Jan_29_12:03} and \eqref{Jan_30_12:07} we derive that \begin{align}\label{Jan30_1:50}
\begin{split}
\!\!S_{n,\alpha}(t) & \!\leq\! \frac{1}{(a+b)\nu} \Big[C_{n+1,\alpha}^+(t\!+\!a\nu) \, \ell_{n+2,\alpha}(t\!+\!a\nu) \!+\! C_{n+1,\alpha}^-(t\!-\!b\nu)\, \ell_{n+2,\alpha}(t\!-\!b\nu) \!+\! O_{n,c}(r_{n+3,\alpha}(t\!+\!a\nu)) \\
& \ \ \ \ \ \ \ \ \ \ + O_{n,c}(r_{n+3,\alpha}(t-b\nu))\Big] +  \left[\dfrac{b^2C_{n-1,\alpha}^+(t) + a^{2}C_{n-1,\alpha}^-(t)}{2(a+b)}\right]\nu \,\ell_{n,\alpha}(t) + O_{n,c}(\nu\,r_{n+1,\alpha}(t)) \\
& = \left[\frac{C_{n+1,\alpha}^+(t) + C_{n+1,\alpha}^-(t)}{(a+b)}\right] \frac{1}{\nu}\,\,\ell_{n+2,\alpha}(t) + 
\left[\dfrac{b^2C_{n-1,\alpha}^+(t) + a^{2}C_{n-1,\alpha}^-(t)}{2(a+b)}\right]\nu \,\ell_{n,\alpha}(t) \\
&  \ \ \ \ \ \ \ \ \ \  + O_{n,c}\left(\frac{r_{n+3,\alpha}(t)}{\nu}\right) + O_{n,c}(\nu \,r_{n+1,\alpha}(t)).
\end{split}
\end{align}
Choosing $\nu = \frac{\lambda_{n,\alpha}(t)}{\log \log t}$ in \eqref{Jan30_1:50}, where $\lambda_{n,\alpha}(t)>0$ is a function to be determined (recall that we required $0 <\nu\leq 1$), we obtain
\begin{align*}
S_{n,\alpha}(t) & \leq \left\{ \left[\frac{C_{n+1,\alpha}^+(t) + C_{n+1,\alpha}^-(t)}{(a+b)}\right] \frac{1}{\lambda_{n,\alpha}(t)} + \left[\dfrac{b^2C_{n-1,\alpha}^+(t) + a^{2}C_{n-1,\alpha}^-(t)}{2(a+b)}\right]\lambda_{n,\alpha}(t)\right\} \,\ell_{n+1,\alpha}(t) \\
& \ \ \ \ \ + O_{n,c}\left(\dfrac{r_{n+2,\alpha}(t)}{\lambda_{n,\alpha}(t)}\right) + O_{n,c}(\lambda_{n,\alpha}(t)\,r_{n+2}(t)).
\end{align*}
We now choose $\lambda_{n,\alpha}(t)>0$ to minimize the expression in brackets, which corresponds to the choice
\begin{align}\label{Jan30_2:13pm}
\lambda_{n,\alpha}(t) = \left[\frac{C_{n+1,\alpha}^+(t) + C_{n+1,\alpha}^-(t)}{(a+b)}\right]^{1/2} \left[\dfrac{b^2C_{n-1,\alpha}^+(t) + a^{2}C_{n-1,\alpha}^-(t)}{2(a+b)}\right]^{-1/2}.
\end{align}
This leads to the bound
\begin{align}\label{Jan30_2:05pm}
\begin{split}
S_{n,\alpha}(t)& \leq 2  \left[\frac{\big(C_{n+1,\alpha}^+(t) + C_{n+1,\alpha}^-(t)\big)  \big(b^2C_{n-1,\alpha}^+(t) + a^{2}C_{n-1,\alpha}^-(t)\big)}{2(a+b)^2}\right]^{1/2}\! \ell_{n+1,\alpha}(t) \\
& \ \ \ \ \ \ \ \ \ \ \ \ \ + O_{n,c}\left(\dfrac{r_{n+2,\alpha}(t)}{\lambda_{n,\alpha}(t)}\right) + O_{n,c}\left(\lambda_{n,\alpha}(t)\,r_{n+2,\alpha}(t)\right).
\end{split}
\end{align}

We seek to minimize the expression in brackets on the right-hand side of \eqref{Jan30_2:05pm} in the variables $a$ and $b$. It is easy to see that it only depends on the ratio $a/b$. If we set $a = bx$, we must minimize the function
$$W(x) = 2  \left[\frac{\big(C_{n+1,\alpha}^+(t) + C_{n+1,\alpha}^-(t)\big)  \big(C_{n-1,\alpha}^+(t) + x^{2}C_{n-1,\alpha}^-(t)\big)}{2(x+1)^2}\right]^{1/2}.$$
Note that $C_{n-1,\alpha}^{\pm}(t)>0$ and $C_{n+1,\alpha}^{\pm}(t)>0$. Such a minimum is obtained when 
\begin{equation}\label{Jan30_2:14pm}
x = C_{n-1,\alpha}^+(t)/ C_{n-1,\alpha}^-(t),
\end{equation}
leading to the bound
\begin{align}  
\begin{split}\label{final_result_for_bound}
S_{n,\alpha}(t)& \leq \left[\frac{2 \big(C_{n+1,\alpha}^+(t) + C_{n+1,\alpha}^-(t)\big) \ C_{n-1,\alpha}^+(t)\,C_{n-1,\alpha}^-(t)}{C_{n-1,\alpha}^+(t)+C_{n-1,\alpha}^-(t)}\right]^{1/2} \!\ell_{n+1,\alpha}(t) \\
& \ \ \ \ \ \ \ \ \ \   + O_{n,c}\left(\dfrac{r_{n+2,\alpha}(t)}{\lambda_{n,\alpha}(t)}\right) + O_{n,c}\left(\lambda_{n,\alpha}(t)\,r_{n+2,\alpha}(t)\right).
\end{split}
\end{align}
We may now set $a+b=1$. From \eqref{Jan30_2:14pm} we then have the exact values of $a$ and $b$ and expression \eqref{Jan30_2:13pm} yields  
\begin{align*} 
\lambda_{n,\alpha}(t) & = \left[\dfrac{2\big(C_{n+1,\alpha}^+(t) + C_{n+1,\alpha}^-(t)\big)\big(C_{n-1,\alpha}^+(t) + C_{n-1,\alpha}^-(t)\big)}{C_{n-1,\alpha}^+(t)\,C_{n-1,\alpha}^-(t)}\right]^{1/2}.
\end{align*}

In the definition of $C_{n-1,\alpha}^\pm(t)$ and $C_{n+1,\alpha}^\pm(t)$, given by \eqref{Feb07_12:58pm}, we now use the bounds (for $j \geq 2$)
$$1\leq  H_j(x) \leq \zeta(j)$$
for $0 < x < 1$, and 
$$1 - \frac{1}{2^j} \leq H_j(x) \leq 1$$
for $-1< x <0$. Together with the fact that $n\geq 2$, after some computations one arrives at
$$\frac12 \leq \lambda_{n,\alpha}(t) \leq 2.$$
Therefore, if $\log \log t \geq 4$, we have $\nu = \frac{\lambda_{n,\alpha}(t)}{\log \log t} \leq 1$, as we had originally required. Finally, expression \eqref{final_result_for_bound} yields
\begin{align*}
S_{n,\alpha}(t)& \leq \left[\frac{2 \big(C_{n+1,\alpha}^+(t) + C_{n+1,\alpha}^-(t)\big) \ C_{n-1,\alpha}^+(t)\,C_{n-1,\alpha}^-(t)}{C_{n-1,\alpha}^+(t)+C_{n-1,\alpha}^-(t)}\right]^{1/2} \!\ell_{n+1,\alpha}(t) + O_{n,c}\left(r_{n+2,\alpha}(t)\right),
\end{align*}
which concludes the proof in this case. 

\smallskip

The argument for the lower bound of is $S_{n,\alpha}(t)$ is entirely symmetric. This completes the proof of Theorem \ref{Thm2}, when $n \geq 2$ is even. 

\subsection{The case $n = 0$} \label{Subsec_Poisson_Feb01_11:24am}

We now consider $\hh < \alpha < 1$. To treat the case $n=0$ we proceed with a variant of the method presented in \S \ref{Subsec_not_Poisson_Feb01_11:31am}, in which we only use the lower bound for $S_{-1,\alpha}(t)$ since this is stable under the limit $\alpha \to \hh^+$. 

\smallskip

Let $c >0$ be a given real number. In the region  $(1-\alpha)^{2} \geq \frac{c/2}{\log\log t}$ we have already shown that
\begin{align}\label{interpol_2_Feb07}
-C_{1,\alpha}^-(t)\,\ell_{2,\alpha}(t)+O_{c}(r_{3,\alpha}(t))\leq S_{1,\alpha}(t)\leq C_{1,\alpha}^+(t)\,\ell_{2,\alpha}(t)+O_{c}(r_{3,\alpha}(t)),
\end{align} 
and 
\begin{align} \label{interpol_1_Feb07}
-C_{-1,\alpha}^-(t)\,\ell_{0,\alpha}(t)+O_{c}(r_{1,\alpha}(t))\leq S_{-1,\alpha}(t).
\end{align}
Let $(\alpha,t)$ be such that $(1-\alpha)^{2}\geq\frac{c}{\log\log t}$. Observe that, in the set $\{(\alpha,\mu); \,t-1\leq\mu\leq t+1\}$, estimates \eqref{interpol_2_Feb07} and \eqref{interpol_1_Feb07} apply (note again the use of the constant $c/2$ instead of $c$ in the domains of these estimates). Then, by the mean value theorem and \eqref{interpol_1_Feb07} we obtain, for $0 \leq h \leq 1$, 
\begin{align}\label{Feb07_4:02pm}
\begin{split} 
S_{0,\alpha}(t) - S_{0,\alpha}(t-h)  = h\, S_{-1,\alpha}(t_h^*)  & \geq -h\,C_{-1,\alpha}^-(t_h^*)\,\ell_{0,\alpha}(t_h^*)+h\,O_c(r_{1,\alpha}(t_h^*)) \\
& = -h\,C_{-1,\alpha}^-(t_h^*)\,\ell_{0,\alpha}(t_h^*)+h\,O_c(r_{1,\alpha}(t)) ,
\end{split}
\end{align}
where $t^*_h$ is a suitable point in the segment connecting $t-h$ and $t$. From the explicit expression 
$$g(t) := C_{-1,\alpha}^-(t)\,\ell_{0,\alpha}(t) = \frac{1}{\pi}\left( \frac{1}{1 + (\log t)^{1 - 2\alpha}} + \frac{2\alpha -1}{\alpha(1 - \alpha)}\right) (\log t )^{2 - 2\alpha}$$
we observe directly that 
$$|g'(t)| \ll \frac{1}{t}$$
and hence, by the mean value theorem, that
\begin{equation}\label{Feb08_4:05pm}
\big|C_{-1,\alpha}^-(t)\,\ell_{0,\alpha}(t)-C_{-1,\alpha}^-(t_h^*)\,\ell_{0,\alpha}(t_h^*)\big|\ll r_{1,\alpha}(t).
\end{equation}
From \eqref{Feb07_4:02pm} and \eqref{Feb08_4:05pm} it follows that 
\begin{equation}\label{Feb08_4:09pm}
S_{0,\alpha}(t) - S_{0,\alpha}(t-h) \geq -h\,C_{-1,\alpha}^-(t)\,\ell_{0,\alpha}(t)+h\,O_c(r_{1,\alpha}(t)).
\end{equation}
Let $\nu = \nu_{\alpha}(t)$ be a real-valued function such that $0 < \nu \leq 1$. For a fixed $t$, we integrate \eqref{Feb08_4:09pm} with respect to the variable $h$ to get
\begin{align*}
S_{0,\alpha}(t) & \geq \frac{1}{\nu} \int_{0}^{\nu} S_{0,\alpha}(t-h)\,\d h  - \frac{1}{\nu} \left( \int_{0}^{\nu}h\,\d h\right)\,C_{-1,\alpha}^-(t)\,\ell_{0,\alpha}(t) + \frac{1}{\nu} \left(\int_{0}^{\nu} h\,\d h \right) O_c(r_{1,\alpha}(t))\\
& = \frac{1}{\nu} \big( S_{1, \alpha}(t) - S_{1,\alpha}(t - \nu)\big) - \dfrac{\nu}{2}\,C_{-1,\alpha}^-(t)\,\ell_{0,\alpha}(t) + O_c(\nu\,r_{1,\alpha}(t)).
\end{align*}
From \eqref{interpol_2_Feb07} we then get
\begin{align}\label{Feb08_4:36pm}
\begin{split}
S_{0,\alpha}(t) & \geq \frac{1}{\nu} \Big[\!-C_{1,\alpha}^-(t)\,\ell_{2,\alpha}(t)-C_{1,\alpha}^+(t-\nu)\,\ell_{2,\alpha}(t-\nu)+O_{c}(r_{3,\alpha}(t)) + O_{c}(r_{3,\alpha}(t-\nu))\Big]\\
&  \ \ \ \ \ \ \ \ \ \ - \dfrac{\nu}{2}\,C_{-1,\alpha}^-(t)\,\ell_{0,\alpha}(t) + O_c(\nu\,r_{1,\alpha}(t))\\
& = - \Big[C_{1,\alpha}^-(t) + C_{1,\alpha}^+(t)\Big]\,\frac{1}{\nu} \,\ell_{2,\alpha}(t) - \dfrac{\nu}{2}\,C_{-1,\alpha}^-(t)\,\ell_{0,\alpha}(t) + O_{c}\left(\frac{r_{3,\alpha}(t)}{\nu}\right) +  O_c(\nu\,r_{1,\alpha}(t)),
\end{split}
\end{align}
where we have used \eqref{Jan_30_12:07} in the last passage.

\smallskip

We now choose $\nu = \frac{\lambda_{\alpha}(t)}{\log \log t}$ in \eqref{Feb08_4:36pm}, where $\lambda_{\alpha}(t)>0$ is a function to be determined. This yields
\begin{align*}
\begin{split}
S_{0,\alpha}(t) & \geq - \left[\Big(C_{1,\alpha}^-(t) + C_{1,\alpha}^+(t)\Big)\,\frac{1}{\lambda_{\alpha}(t)} +  \dfrac{C_{-1,\alpha}^-(t)}{2}\,\lambda_{\alpha}(t)\right] \ell_{1,\alpha}(t) + O_{c}\left(\frac{r_{2,\alpha}(t)}{\lambda_{\alpha}(t)}\right) +  O_c(\lambda_{\alpha}(t)\,r_{2,\alpha}(t)).
\end{split}
\end{align*}
Choosing $\lambda_{\alpha}(t)$ in order to minimize the expression in brackets, we find that
\begin{equation}\label{Feb08_5:05pm}
\lambda_{\alpha}(t) = \left(\frac{2 \big(C_{1,\alpha}^-(t) + C_{1,\alpha}^+(t)\big)}{C_{-1,\alpha}^-(t)}\right)^{1/2}.
\end{equation}
This leads to the bound
\begin{equation}\label{Feb09_8:48am}
S_{0,\alpha}(t) \geq - \Big[2 \big(C_{1,\alpha}^-(t) + C_{1,\alpha}^+(t)\big)\,C_{-1,\alpha}^-(t)\Big]^{1/2}\, \ell_{1,\alpha}(t) + O_{c}\left(\frac{r_{2,\alpha}(t)}{\lambda_{\alpha}(t)}\right) +  O_c(\lambda_{\alpha}(t)\,r_{2,\alpha}(t)).
\end{equation}
Finally, using the trivial estimates
\begin{equation*}
\frac{1}{\pi} \left(\frac12 + \frac{2\alpha-1}{\alpha(1- \alpha)}\right)\leq C_{-1,\alpha}^-(t) \leq \frac{1}{\pi} \left(1 + \frac{2\alpha-1}{\alpha(1- \alpha)}\right),
\end{equation*}
\begin{equation*}
\frac{1}{4\pi} \left(1 + \frac{2\alpha-1}{\alpha(1- \alpha)}\right)\leq C_{1,\alpha}^-(t) \leq \frac{1}{4\pi} \left(\zeta(2) + \frac{2\alpha-1}{\alpha(1- \alpha)}\right),
\end{equation*}
and
\begin{equation*}
\frac{1}{4\pi} \left(\frac34 + \frac{2\alpha-1}{\alpha(1- \alpha)}\right)\leq C_{1,\alpha}^+(t) \leq \frac{1}{4\pi} \left(1 + \frac{2\alpha-1}{\alpha(1- \alpha)}\right),
\end{equation*}
one can show that $\lambda_{\alpha}(t)$ defined by \eqref{Feb08_5:05pm} verifies the inequalities
\begin{equation*}
\frac{1}{2} \leq \lambda_{\alpha}(t) \leq 2\,,
\end{equation*}
which shows that indeed $0< \nu \leq 1$ and allows us to write  \eqref{Feb09_8:48am} in our originally intended form of
\begin{equation*}
S_{0,\alpha}(t) \geq - \Big[2 \big(C_{1,\alpha}^-(t) + C_{1,\alpha}^+(t)\big)\,C_{-1,\alpha}^-(t)\Big]^{1/2}\, \ell_{1,\alpha}(t) + O_{c}(r_{2,\alpha}(t)).
\end{equation*}

The proof of the upper bound for $S_{0,\alpha}(t)$ follows along the same lines. Instead of \eqref{Feb07_4:02pm}, one would start with the following inequality, valid for $0 \leq h \leq 1$ and $ t_h^* \in [t, t+h]$, 
\begin{align*}
S_{0,\alpha}(t+h) - S_{0,\alpha}(t) = h\, S_{-1,\alpha}(t_h^*)  & \geq -h\,C_{-1,\alpha}^-(t_h^*)\,\ell_{0,\alpha}(t_h^*)+h\,O_c(r_{1,\alpha}(t_h^*)).
\end{align*}
This completes the proof of our Theorem \ref{Thm2}.

\section*{Appendix A: Calculus facts}

\subsection*{Prelude} Throughout these appendices we encounter the following setting in multiple situations:  let $c>0$ be a given real number and $\hh \leq \alpha < 1$ and $x \geq 3$ be such that 
\begin{equation}\label{Feb02_4:39pm}
(1-\alpha)^{2}\log x \geq c.
\end{equation}
Let us note that, if $0 \leq \theta_1 , \theta_2$ are real numbers, it follows from \eqref{Feb02_4:39pm} that 
\begin{equation}\label{Feb02_4:52pm}
(1-\alpha)^{\theta_1} \,(\log x)^{\theta_2} \ll_{c,\theta_1, \theta_2} x^{1-\alpha}.
\end{equation}
In fact, if $\theta _1 > \theta_2$ we simply observe that 
$$(1-\alpha)^{\theta_1} \,(\log x)^{\theta_2} \leq (1-\alpha)^{\theta_2} \,(\log x)^{\theta_2} \ll_{\theta_2} x^{1-\alpha}.$$
On the other hand, if $0 \leq \theta_1 \leq  \theta_2$, we let $\ell = \theta_2 - \theta_1\geq 0$ and $\eta = \theta_1 + 2\ell = \theta_2 + \ell$ to obtain
\begin{align*}
(1-\alpha)^{\theta_1} \,(\log x)^{\theta_2} \ll_{c,\theta_1, \theta_2} (1-\alpha)^{\theta_1} \,(\log x)^{\theta_2} \big((1-\alpha)^{2}\log x\big)^{\ell} = \big((1-\alpha) \log x\big)^{\eta} \ll_{\eta} x^{1-\alpha}.
\end{align*} 

We now proceed with the calculus facts required for our analysis.

\subsection*{A.1} {\it Let $c>0$ be a given real number and $m\geq 0$ be a given integer. For $\hh \leq \alpha < 1$ and $x \geq 3$ such that $(1-\alpha)^{2}\log x \geq c$, we have }
\begin{align*}
\displaystyle\int_{2}^{x}\dfrac{1}{t^{\alpha}(\log t)^{2m+2}}\,\d t =
\dfrac{x^{1-\alpha}}{(1-\alpha)(\log x)^{2m+2}}+ O_{m,c}\bigg(\dfrac{x^{1-\alpha}}{(1-\alpha)^{2}(\log x)^{2m+3}}\bigg).
\end{align*}   
\begin{proof}
Using integration by parts we get
\begin{align}\label{Feb10_1:22pm}
\displaystyle\int_{2}^{x}\dfrac{1}{t^{\alpha}(\log t)^{2m+2}}\,\d t = \dfrac{x^{1-\alpha}}{(1-\alpha)(\log x)^{2m+2}} - \dfrac{2^{1-\alpha}}{(1-\alpha)(\log 2)^{2m+2}} + \frac{(2m+2)}{(1- \alpha)} \int_{2}^{x} \frac{1}{t^{\alpha} \,(\log t)^{2m+3}}\,\dt.
\end{align}
From \eqref{Feb02_4:52pm} we have
\begin{equation}\label{Feb10_1:23pm}
\dfrac{2^{1-\alpha}}{(1-\alpha)(\log 2)^{2m+2}} \ll_m \frac{1}{(1-\alpha)} \ll_{m,c} \dfrac{x^{1-\alpha}}{(1-\alpha)^{2}(\log x)^{2m+3}},
\end{equation}
and
\begin{align}\label{Feb10_1:24pm}
\begin{split}
\int_{2}^{x} \frac{1}{t^{\alpha} \,(\log t)^{2m+3}}\,\dt & = \int_{2}^{x^{2/3}} \frac{1}{t^{\alpha} (\log t)^{2m+3}}\,\dt + \int_{x^{2/3}}^{x} \frac{1}{t^{\alpha} (\log t)^{2m+3}}\,\dt \\
& \leq \frac{1}{(\log 2)^{2m+3}} \int_{2}^{x^{2/3}} \frac{1}{t^{\alpha}}\,\dt + \frac{1}{(\log (x^{2/3}))^{2m+3}} \int_{x^{2/3}}^{x} \frac{1}{t^{\alpha}}\,\dt\\
& \ll_m \frac{x^{\frac{2}{3}(1-\alpha)}}{(1-\alpha)} + \dfrac{x^{1-\alpha}}{(1-\alpha)(\log x)^{2m+3}}\\
& \ll_{m,c} \dfrac{x^{1-\alpha}}{(1-\alpha)(\log x)^{2m+3}}.
\end{split}
\end{align}
The desired inequality follows by combining \eqref{Feb10_1:22pm}, \eqref{Feb10_1:23pm}, and \eqref{Feb10_1:24pm}.
\end{proof}

\subsection*{A.2} {\it Let $c>0$ be a given real number and $m\geq 0$ and $k\geq 1$ be given integers. For $\hh \leq \alpha < 1$ and $x \geq 3$ such that $(1-\alpha)^{2}\log x \geq c$, we have }
		\begin{align*}
		\displaystyle\int_{2}^{x}\dfrac{1}{t^{\alpha}(k\log x +\log t)^{2m+2}}\,\dt & = 
		\dfrac{x^{1-\alpha}}{(1-\alpha)((k+1)\log x)^{2m+2}}-\dfrac{2^{1-\alpha}}{(1-\alpha)(k\log x+\log 2)^{2m+2}} \\
		& \ \ \ \  \   + O_{m,c}\bigg(\dfrac{x^{1-\alpha}}{(1-\alpha)^{2}((k+1)\log x)^{2m+3}}\bigg).
		\end{align*}
\begin{proof}
	Using the change of variables $y=x^kt$ and \textbf{A.1} we obtain 
	\begin{align*} \displaystyle\int_{2}^{x}\dfrac{1}{t^{\alpha}(k\log x +\log t)^{2m+2}}\,\d t & = x^{-k+k\alpha}\displaystyle\int_{2x^k}^{x^{k+1}}\dfrac{1}{y^{\alpha}(\log y)^{2m+2}}\,\dy \\
	& = x^{-k+k\alpha}\Bigg[\displaystyle\int_{2}^{x^{k+1}}\dfrac{1}{y^{\alpha}(\log y)^{2m+2}}\,\dy-\displaystyle\int_{2}^{2x^k}\dfrac{1}{y^{\alpha}(\log y)^{2m+2}}\,\dy\Bigg] \\
	& =  x^{-k+k\alpha}\Bigg[\dfrac{(x^{k+1})^{1-\alpha}}{(1-\alpha)(\log x^{(k+1)})^{2m+2}}+ O_{m,c}\bigg(\dfrac{(x^{k+1})^{1-\alpha}}{(1-\alpha)^{2}(\log x^{(k+1)})^{2m+3}}\bigg) \\
	& \ \ \ \ \ \ \ \ \ \ \ \ \ \ \  - \dfrac{(2x^{k})^{1-\alpha}}{(1-\alpha)(\log(2x^k))^{2m+2}}+ O_{m,c}\bigg(\dfrac{(2x^{k})^{1-\alpha}}{(1-\alpha)^{2}(\log(2x^k))^{2m+3}}\bigg)\Bigg]  \\
	& = \dfrac{x^{1-\alpha}}{(1-\alpha)((k+1)\log x)^{2m+2}}-\dfrac{2^{1-\alpha}}{(1-\alpha)(k\log x+\log 2)^{2m+2}} \\
	& \ \ + O_{m,c}\bigg(\!\dfrac{x^{1-\alpha}}{(1-\alpha)^{2}((k+1)\log x)^{2m+3}}\!\bigg) \!+ O_{m,c}\bigg(\!\dfrac{1}{(1-\alpha)^{2}(k\log x+\log 2)^{2m+3}}\!\bigg). 
	\end{align*}
Since 
\begin{align*}
	\dfrac{1}{(1-\alpha)^{2}(k\log x+\log 2)^{2m+3}}\leq \dfrac{2^{2m+3}}{(1-\alpha)^{2}((k+1)\log x)^{2m+3}} \ll_m \dfrac{x^{1-\alpha}}{(1-\alpha)^{2}((k+1)\log x)^{2m+3}},  
\end{align*}
we obtain the desired result.
\end{proof}

\subsection*{A.3} {\it Let $m\geq 0$ and $k\geq 0$ be given integers. For $\hh \leq \alpha < 1$ and $x \geq 3$ we have }

\begin{align*} \displaystyle\int_{2}^{x}\dfrac{1}{t^{1-\alpha}((k+2)\log x-\log t)^{2m+2}}\,\dt & = \dfrac{x^\alpha}{\alpha((k+1)\log x)^{2m+2}}-\dfrac{2^\alpha}{\alpha((k+2)\log x-\log 2)^{2m+2}} \\
 & \ \ \ \ + O_m\bigg(\dfrac{x^\alpha}{((k+1)\log x)^{2m+3}}\bigg).
\end{align*}

\begin{proof}
	Let $y=\frac{x^{k+2}}{t}$. The integral becomes
	\begin{align*}
	x^{(k+2)\alpha}\displaystyle\int_{x^{k+1}}^{\frac{x^{k+2}}{2}}\dfrac{1}{y^{1+\alpha}(\log y)^{2m+2}}\,\d y  & = \dfrac{x^\alpha}{\alpha((k+1)\log x)^{2m+2}}-\dfrac{2^{\alpha}}{\alpha((k+2)\log x-\log 2)^{2m+2}} \\
	& \ \ \ - \dfrac{(2m+2)x^{(k+2)\alpha}}{\alpha}\displaystyle\int_{x^{k+1}}^{\frac{x^{k+2}}{2}}\dfrac{1}{y^{1+\alpha}(\log y)^{2m+3}}\,\d y\,,
	\end{align*}
where we have used integration by parts. Finally, the result follows from the fact that
	\[
	\displaystyle\int_{x^{k+1}}^{\frac{x^{k+2}}{2}}\dfrac{1}{y^{1+\alpha}(\log y)^{2m+3}}\d y \ll \dfrac{1}{((k+1)\log x)^{2m+3}}\displaystyle\int_{x^{k+1}}^{\frac{x^{k+2}}{2}}\dfrac{1}{y^{1+\alpha}}\,\d y\ll\dfrac{1}{x^{(k+1)\alpha}((k+1)\log x)^{2m+3}}.
	\]
\end{proof}

\subsection*{A.4} {\it For $\hh < \alpha < 1$} and $x \geq 3$ we have 
\begin{equation*}
\displaystyle\sum_{k=1}^{\infty}\dfrac{1}{\big(x^{\alpha-\frac{1}{2}}\big)^k}\leq \dfrac{1}{(\alpha-\frac{1}{2})\log x}.
\end{equation*}

\begin{proof}
Using the mean value theorem we have that
\begin{align*}
\displaystyle\sum_{k=1}^{\infty}\dfrac{1}{\big(x^{\alpha-\frac{1}{2}}\big)^k}=\dfrac{1}{x^{\alpha-\frac{1}{2}}-1}=\dfrac{1}{(\alpha-\frac{1}{2})x^{\xi}\log x}\leq \dfrac{1}{(\alpha-\frac{1}{2})\log x},
\end{align*}
where $\xi$ is a point in the interval $(0,\alpha-\frac{1}{2})$.
\end{proof}

\subsection*{A.5} {\it Let $m\geq 0$ be a given integer. For $\hh \leq \alpha < 1$ and $x \geq 3$ we have }
\begin{align*}
\displaystyle\sum_{k=1}^{\infty}&\dfrac{k+1}{\big(x^{\alpha-\frac{1}{2}}\big)^k} \Bigg|\dfrac{2^\alpha}{x^{2\alpha-1}\,((k+2)\log x-\log 2)^{2m+2}}-\dfrac{2^{1-\alpha}}{(k\log x+\log 2)^{2m+2}}\Bigg| \ll_m \dfrac{x^{1-\alpha}}{(1-\alpha)^2(\log x)^{2m+3}}
\end{align*}
{\it and}
\begin{align*}
\displaystyle\sum_{k=1}^{\infty}&\dfrac{k+1}{\big(x^{\alpha-\frac{1}{2}}\big)^k} \Bigg|\dfrac{2^\alpha}{x^{2\alpha-1}\alpha((k+2)\log x-\log 2)^{2m+2}}-\dfrac{2^{1-\alpha}}{(1-\alpha)(k\log x+\log 2)^{2m+2}}\Bigg| \ll_m \dfrac{x^{1-\alpha}}{(1-\alpha)^2(\log x)^{2m+3}}.
\end{align*}

\begin{proof} Using the mean value theorem for the functions $y\mapsto y^{2m+2}$ and $y\mapsto 2^{\alpha-y}x^y$ we obtain, for $k \geq 1$, that
\begin{align*}
&\left|\dfrac{2^\alpha}{x^{2\alpha-1}((k+2)\log x-\log 2)^{2m+2}}-\dfrac{2^{1-\alpha}}{(k\log x+\log 2)^{2m+2}}\right| \\
& \leq  \left|\dfrac{2^{\alpha}}{x^{2\alpha-1}}\bigg(\dfrac{1}{((k+2)\log x-\log 2)^{2m+2}}-\dfrac{1}{(k\log x+\log 2)^{2m+2}}\bigg)\right| + \bigg|\dfrac{1}{(k\log x+\log 2)^{2m+2}}\bigg(\dfrac{2^{\alpha}}{x^{2\alpha-1}}-2^{1-\alpha}\bigg)\bigg|\\
& =  \dfrac{2^{\alpha}}{x^{2\alpha-1}}\bigg(\dfrac{((k+2)\log x-\log 2)^{2m+2}-(k\log x+\log 2)^{2m+2}}{((k+2)\log x-\log 2)^{2m+2}(k\log x+\log 2)^{2m+2}}\bigg) + \dfrac{1}{(k\log x+\log 2)^{2m+2}}\bigg(\dfrac{2^{1-\alpha}x^{2\alpha-1}-2^\alpha}{x^{2\alpha-1}}\bigg) \\
& \leq \dfrac{2^{\alpha}}{x^{2\alpha-1}}\bigg(\dfrac{2(2m+2)(\log x-\log 2)((k+2)\log x-\log 2)^{2m+1}}{((k+2)\log x-\log 2)^{2m+2}(k\log x+\log 2)^{2m+2}}\bigg) + \dfrac{(2\alpha-1)2^{1-\alpha}(\log x - \log 2)}{(k\log x+\log 2)^{2m+2}} \\
& \ll_m \dfrac{1}{x^{2\alpha-1}(k+1)^{2m+3}(\log x)^{2m+2}} + \dfrac{(2\alpha-1)}{(k+1)^{2m+2}(\log x)^{2m+1}} .
\end{align*}
Therefore, summing over all $k\geq 1$ and using \textbf{A.4}, we arrive at
\begin{align} \label{bestbound}
\begin{split}
\displaystyle\sum_{k=1}^{\infty}&\dfrac{k+1}{\big(x^{\alpha-\frac{1}{2}}\big)^k} \Bigg|\dfrac{2^\alpha}{x^{2\alpha-1}((k+2)\log x-\log 2)^{2m+2}}-\dfrac{2^{1-\alpha}}{(k\log x+\log 2)^{2m+2}}\Bigg| \\
& \ \ \ \ \ \ \ \ \ \ \ll_m  \displaystyle\sum_{k=1}^{\infty}\dfrac{k+1}{\big(x^{\alpha-\frac{1}{2}}\big)^k}\left(\dfrac{1}{x^{2\alpha-1}(k+1)^{2m+3}(\log x)^{2m+2}} + \dfrac{(2\alpha-1)}{(k+1)^{2m+2}(\log x)^{2m+1}}\right)  \\
& \ \ \ \ \ \ \ \ \ \ \leq \dfrac{1}{x^{2\alpha-1}(\log x)^{2m+2}} \displaystyle\sum_{k=1}^{\infty}\dfrac{1}{\big(x^{\alpha-\frac{1}{2}}\big)^k(k+1)^{2m+2}} + \dfrac{2\alpha-1}{(\log x)^{2m+1}}\displaystyle\sum_{k=1}^{\infty}\dfrac{1}{\big(x^{\alpha-\frac{1}{2}}\big)^k} \\ 
& \ \ \ \ \ \ \ \ \ \ \ll \dfrac{1}{(\log x)^{2m+2}} \ll \dfrac{x^{1-\alpha}}{(1-\alpha)(\log x)^{2m+3}} \ll \dfrac{x^{1-\alpha}}{(1-\alpha)^2(\log x)^{2m+3}},
\end{split} 
\end{align}
which establishes our first proposed estimate. To prove the second, we use the first one and \textbf{A.4} as follows
\begin{align*}
\displaystyle\sum_{k=1}^{\infty}&\dfrac{k+1}{\big(x^{\alpha-\frac{1}{2}}\big)^k} \Bigg|\dfrac{2^\alpha}{x^{2\alpha-1}\,\alpha((k+2)\log x-\log 2)^{2m+2}}-\dfrac{2^{1-\alpha}}{(1-\alpha)(k\log x+\log 2)^{2m+2}}\Bigg|  \\
& \leq  \dfrac{1}{\alpha}\displaystyle\sum_{k=1}^{\infty}\dfrac{k+1}{\big(x^{\alpha-\frac{1}{2}}\big)^k} \Bigg|\dfrac{2^\alpha}{x^{2\alpha-1}((k+2)\log x-\log 2)^{2m+2}}-\dfrac{2^{1-\alpha}}{(k\log x+\log 2)^{2m+2}}\Bigg|  \\
& \ \ \ \ + \displaystyle\sum_{k=1}^{\infty}\dfrac{k+1}{\big(x^{\alpha-\frac{1}{2}}\big)^k} \Bigg|\dfrac{2^{1-\alpha}}
{\alpha(k\log x+\log 2)^{2m+2}}-\dfrac{2^{1-\alpha}}
{(1-\alpha)(k\log x+\log 2)^{2m+2}}\Bigg|  \\
& \ll_m \dfrac{x^{1-\alpha}}{(1-\alpha)^2(\log x)^{2m+3}} + \dfrac{2\alpha-1}{\alpha(1-\alpha)} \displaystyle\sum_{k=1}^{\infty}\dfrac{k+1}{\big(x^{\alpha-\frac{1}{2}}\big)^k} \left(\dfrac{2^{1-\alpha}}
{(k\log x+\log 2)^{2m+2}}\right)  \\
& \ll \dfrac{x^{1-\alpha}}{(1-\alpha)^2(\log x)^{2m+3}} + \dfrac{2\alpha-1}{\alpha(1-\alpha)(\log x)^{2m+2}} \displaystyle\sum_{k=1}^{\infty}\dfrac{1}{\big(x^{\alpha-\frac{1}{2}}\big)^k} \\
& \ll \dfrac{x^{1-\alpha}}{(1-\alpha)^2(\log x)^{2m+3}} + \dfrac{1}{(1-\alpha)(\log x)^{2m+3}}\\
&\ll \dfrac{x^{1-\alpha}}{(1-\alpha)^2(\log x)^{2m+3}}.
\end{align*}
\end{proof}


\section*{Appendix B - Number theory facts}
\subsection*{Prelude} Recall that, under the Riemann hypothesis, the prime number theorem takes the form (\cite[Section 13.1]{MV})
\begin{equation}\label{PNT_under_RH}
\displaystyle\sum_{n\leq x}\Lambda(n)=x+O\big(x^{\frac12}(\log x)^2\big).
\end{equation}
In what follows we shall use in integration by parts in multiple occasions. Let $\varepsilon >0$ be a small number and $f:\Omega \to \R$, where $\Omega = \{(x,y) \in \R^2; \ 2 \leq x < \infty \ ; \ 1\leq y \leq x + 2\varepsilon\}$, be a function such that $y \mapsto f(x,y)$ is continuously differentiable in $(1, x + \varepsilon )$, for all $x \in [2,\infty)$. Using \eqref{PNT_under_RH} we obtain
\begin{equation}\label{newpnt}
\displaystyle\sum_{n\leq x}\Lambda(n)f(x,n)=\displaystyle\int_{2}^{x}\,f(x,y)\,\d y + 2 f(x,2) + O\big(x^{\frac12}(\log x)^2 \,|f(x,x)|\big)
+ O\bigg(\!\displaystyle\int_{2}^{x}y^{\frac12}(\log y)^2\, \left|\dfrac{\partial}{\partial y}f(x,y)\right|\,\d y\bigg). 
\end{equation}
We now proceed with the number theory facts required for our analysis. We assume the Riemann hypothesis in all the statements below.

\subsection*{B.1} {\it Let $c>0$ be a given real number and $m\geq 0$ be a given integer. For $\hh \leq \alpha < 1$ and $x \geq 3$ such that $(1-\alpha)^{2}\log x \geq c$, we have }
\begin{equation*}
\displaystyle\sum_{n\leq x}\dfrac{\Lambda(n)}{n^\alpha(\log n)^{2m+2}} = \dfrac{x^{1-\alpha}}{(1-\alpha)(\log x)^{2m+2}}+ O_{m,c}\left(\dfrac{x^{1-\alpha}}{(1-\alpha)^{2}(\log x)^{2m+3}}\right).
\end{equation*}
\begin{proof}
Using \eqref{newpnt}, together with \textbf{A.1} and \eqref{Feb02_4:52pm}, we obtain
\begin{align*}
\begin{split}
\displaystyle\sum_{n\leq x}\dfrac{\Lambda(n)}{n^\alpha(\log n)^{2m+2}}&=\displaystyle\int_{2}^{x}\,\dfrac{1}{y^\alpha(\log y)^{2m+2}}\d y + \dfrac{2^{1-\alpha}}{(\log 2)^{2m+2}} + O\bigg(\dfrac{x^{\frac12}(\log x)^2}{x^\alpha(\log x)^{2m+2}}\bigg) \\
& \ \ \  \ \ \ \ \  + O\left(\displaystyle\int_{2}^{x}y^{\frac12}(\log y)^2\, \left|\dfrac{\partial}{\partial y}\bigg[\dfrac{1}{y^\alpha(\log y)^{2m+2}}\bigg]\right|\d y\right) \\
& = \dfrac{x^{1-\alpha}}{(1-\alpha)(\log x)^{2m+2}}+ O_{m,c}\left(\dfrac{x^{1-\alpha}}{(1-\alpha)^{2}(\log x)^{2m+3}}\right) + O_m\left(\displaystyle\int_{2}^{x}\dfrac{1}{y^{\alpha+\frac12}}\d y\right).
\end{split}
\end{align*}
We now analyze the last term. From \eqref{Feb02_4:52pm} we have
\begin{equation}\label{Feb02_5:27pm}
\displaystyle\int_{2}^{x}\dfrac{1}{y^{\alpha+\frac12}}\dy \leq \displaystyle\int_{2}^{x}\dfrac{1}{y}\,\dy \leq \log x\ll_{m,c} \frac{x^{1-\alpha}}{(1-\alpha)^{2}(\log x)^{2m+3}}\,,
\end{equation}
and this concludes the proof.
\end{proof}

\subsection*{B.2} {\it Let $c>0$ be a given real number and $m\geq 0$ be a given integer. For $\hh \leq \alpha < 1$ and $x \geq 3$ such that $(1-\alpha)^{2}\log x \geq c$, we have }\begin{equation*}
\dfrac{1}{x^{2\alpha-1}}\displaystyle\sum_{n\leq x}\dfrac{\Lambda(n)}{n^{1-\alpha}(2\log x-\log n)^{2m+2}} = \dfrac{x^{1-\alpha}}{\alpha(\log x)^{2m+2}}+ O_{m,c}\left(\dfrac{x^{1-\alpha}}{(1-\alpha)^{2}(\log x)^{2m+3}}\right).
\end{equation*}
\begin{proof}
	Using \eqref{newpnt} together with \textbf{A.3}, we have 	
	\begin{align} \label{sumprin2}
	& \dfrac{1}{x^{2\alpha-1}}\displaystyle\sum_{n\leq x}\dfrac{\Lambda(n)}{n^{1-\alpha}(2\log x-\log n)^{2m+2}} \nonumber \\
	&  = \dfrac{1}{x^{2\alpha-1}}\displaystyle\int_{2}^{x}\,\dfrac{1}{y^{1-\alpha}(2\log x-\log y)^{2m+2}}\,\d y + \dfrac{2^{\alpha}}{x^{2\alpha-1}\,(2\log x-\log 2)^{2m+2}}  + O\left(\dfrac{1}{x^{\alpha-\frac12} (\log x)^{2m}}\right) \nonumber \\
	& \ \ \ \ \ \ \ \ + O\left(\dfrac{1}{x^{2\alpha-1}}\displaystyle\int_{2}^{x}y^{\frac12}(\log y)^2\, \left|\dfrac{\partial}{\partial y}\bigg[\dfrac{1}{y^{1-\alpha}(2\log x-\log y)^{2m+2}}\bigg]\right|\d y\right) \\
	& = \dfrac{x^{1-\alpha}}{\alpha(\log x)^{2m+2}}+ O_m\left(\dfrac{x^{1-\alpha}}{(\log x)^{2m+3}}\right) + O(1) \nonumber \\
	& \ \ \ \ \ \ \ \ + O\left(\dfrac{1}{x^{2\alpha-1}}\displaystyle\int_{2}^{x}y^{\frac12}(\log y)^2\, \left|\dfrac{\partial}{\partial y}\bigg[\dfrac{1}{y^{1-\alpha}(2\log x-\log y)^{2m+2}}\bigg]\right|\d y\right).\nonumber 
	\end{align}		
	We further analyze the last term
	\begin{align*}
	\begin{split}
 \dfrac{1}{x^{2\alpha-1}}\displaystyle\int_{2}^{x}y^{\frac12}(\log y)^2\, \left|\dfrac{\partial}{\partial y}\bigg[\dfrac{1}{y^{1-\alpha}(2\log x-\log y)^{2m+2}}\bigg]\right|\d y& \ll_m \displaystyle\int_{2}^{x}\frac{(\log y)^2}{x^{2\alpha-1}\,y^{\frac32-\alpha}(2\log x-\log y)^{2m+2}}\,\d y \\
	& \leq \displaystyle\int_{2}^{x}\frac{(\log y)^2}{y^{2\alpha-1}\,y^{\frac32-\alpha}(2\log x-\log y)^{2m+2}}\,\d y  \\
	& \leq \displaystyle\int_{2}^{x}\dfrac{1}{y^{\alpha+\frac12}}\,\d y.
	\end{split}
	\end{align*}
	Therefore, using \eqref{Feb02_4:52pm} and \eqref{Feb02_5:27pm} in \eqref{sumprin2} we obtain the result.
\end{proof}

\subsection*{B.3} {\it Let $c>0$ be a given real number and $m\geq 0$ be a given integer. For $\hh \leq \alpha < 1$ and $x \geq 3$ such that $(1-\alpha)^{2}\log x \geq c$, we have }
\begin{align*}
\displaystyle\sum_{k=1}^{\infty}\dfrac{k+1}{\big(x^{\alpha-\frac{1}{2}}\big)^k}& \Bigg|\displaystyle\sum_{n\leq x}\Lambda(n) \Bigg(\dfrac{1}{n^{\alpha}(k\log x+\log n)^{2m+2}}-\dfrac{1}{x^{2\alpha-1}\,n^{1-\alpha}((k+2)\log x-\log n)^{2m+2}}\Bigg)\Bigg| \\
& \ \ \ \ll_{m,c} \dfrac{x^{1-\alpha}}{(1-\alpha)^{2}(\log x)^{2m+3}}. 
\end{align*}

\begin{proof} Using \eqref{newpnt}, \textbf{A.2} and \textbf{A.3} we have, for any $k\geq 1$, 
\begin{align}\label{Feb13_1:10pm}
&\displaystyle\sum_{n\leq x}\Lambda(n) \Bigg(\dfrac{1}{n^{\alpha}(k\log x+\log n)^{2m+2}}-\dfrac{1}{x^{2\alpha-1}\,n^{1-\alpha}((k+2)\log x-\log n)^{2m+2}}\Bigg) \nonumber \\
&=\int_{2}^{x}\Bigg(\dfrac{1}{y^{\alpha}(k\log x+\log y)^{2m+2}}-\dfrac{1}{x^{2\alpha-1}\,y^{1-\alpha}((k+2)\log x-\log y)^{2m+2}}\Bigg)\d y \nonumber  \\
& \ \ \ + 2\Bigg(\dfrac{1}{2^{\alpha}(k\log x+\log 2)^{2m+2}}-\dfrac{1}{x^{2\alpha-1}\,2^{1-\alpha}((k+2)\log x-\log 2)^{2m+2}}\Bigg) \\
& \ \ \ + O\Bigg(\int_{2}^{x}y^{\frac12}(\log y)^2\left|\dfrac{\partial}{\partial y}\Bigg[\dfrac{1}{y^{\alpha}(k\log x+\log y)^{2m+2}}-\dfrac{1}{x^{2\alpha-1}\,y^{1-\alpha}((k+2)\log x-\log y)^{2m+2}}\Bigg]\right|\dy\Bigg) \nonumber \\
&= \dfrac{2\alpha-1}{\alpha(1-\alpha)}\dfrac{x^{1-\alpha}}{((k+1)\log x)^{2m+2}} + \left(\dfrac{2^\alpha}{x^{2\alpha-1}\,\alpha((k+2)\log x-\log 2)^{2m+2}}-\dfrac{2^{1-\alpha}}{(1-\alpha)(k\log x+\log 2)^{2m+2}}\right) \nonumber \\
& \ \ \ + \left(\dfrac{2^{1-\alpha}}{(k\log x+\log 2)^{2m+2}}-\dfrac{2^{\alpha}}{x^{2\alpha-1}\,((k+2)\log x-\log 2)^{2m+2}}\right)  + O_{m,c}\bigg(\dfrac{x^{1-\alpha}}{(1-\alpha)^{2}((k+1)\log x)^{2m+3}}\bigg) \nonumber \\ 
& \ \ \ + O\Bigg(\int_{2}^{x}y^{\frac12}(\log y)^2\left|\dfrac{\partial}{\partial y}\Bigg[\dfrac{1}{y^{\alpha}(k\log x+\log y)^{2m+2}}-\dfrac{1}{x^{2\alpha-1}\,y^{1-\alpha}((k+2)\log x-\log y)^{2m+2}}\Bigg]\right|\dy\Bigg).\nonumber 
\end{align} 
We now sum over $k\geq 1$ and analyze each term that appears in \eqref{Feb13_1:10pm}. \\

\noindent 1. {\it First term}: Using \textbf{A.4} we obtain 
	\begin{align*} 
	\begin{split}
	\displaystyle\sum_{k=1}^{\infty}\dfrac{k+1}{\big(x^{\alpha-\frac{1}{2}}\big)^k}\left(\dfrac{2\alpha-1}{\alpha(1-\alpha)}\dfrac{x^{1-\alpha}}{((k+1)\log x)^{2m+2}}\right) &\leq \dfrac{2\alpha-1}{\alpha(1-\alpha)}\dfrac{x^{1-\alpha}}{(\log x)^{2m+2}}\displaystyle\sum_{k=1}^{\infty}\dfrac{1}{\big(x^{\alpha-\frac{1}{2}}\big)^k} \\
	&  \ll  \dfrac{x^{1-\alpha}}{(1-\alpha)^{2}(\log x)^{2m+3}}.
	\end{split}
	\end{align*}

\noindent 2. {\it Second and third terms}: Using \textbf{A.5} we obtain  
\begin{align*}
	& \displaystyle\sum_{k=1}^{\infty}\dfrac{k+1}{\big(x^{\alpha-\frac{1}{2}}\big)^k} \Bigg|\dfrac{2^\alpha}{x^{2\alpha-1}\,\alpha((k+2)\log x-\log 2)^{2m+2}}-\dfrac{2^{1-\alpha}}{(1-\alpha)(k\log x+\log 2)^{2m+2}}\Bigg| \\
	& + 	\displaystyle\sum_{k=1}^{\infty}\dfrac{k+1}{\big(x^{\alpha-\frac{1}{2}}\big)^k} \Bigg|\dfrac{2^{1-\alpha}}{(k\log x+\log 2)^{2m+2}}-\dfrac{2^{\alpha}}{x^{2\alpha-1}((k+2)\log x-\log 2)^{2m+2}}\Bigg|\ll_m  \dfrac{x^{1-\alpha}}{(1-\alpha)^{2}(\log x)^{2m+3}}. 
	\end{align*}

\noindent 3. {\it Fourth term}:
\begin{align*}
	\displaystyle\sum_{k=1}^{\infty}\dfrac{k+1}{\big(x^{\alpha-\frac{1}{2}}\big)^k}\dfrac{x^{1-\alpha}}{(1-\alpha)^{2}((k+1)\log x)^{2m+3}}\ll \dfrac{x^{1-\alpha}}{(1-\alpha)^2(\log x)^{2m+3}}.
	\end{align*}
	
\noindent 4. {\it Fifth term}: Using \textbf{A.4} again we have
\begin{align} 
 \label{troy}
\displaystyle\sum_{k=1}^{\infty}&\dfrac{k+1}{\big(x^{\alpha-\frac{1}{2}}\big)^k}\int_{2}^{x}y^{\frac{1}{2}}(\log y)^2\left|\dfrac{\partial}{\partial y}\Bigg[\dfrac{1}{y^{\alpha}(k\log x+\log y)^{2m+2}}-\dfrac{1}{x^{2\alpha-1}\,y^{1-\alpha}((k+2)\log x-\log y)^{2m+2}}\Bigg]\right|\dy \nonumber \\
& =  \displaystyle\sum_{k=1}^{\infty}\dfrac{k+1}{\big(x^{\alpha-\frac{1}{2}}\big)^k} \int_{2}^{x}y^{\frac{1}{2}}(\log y)^2 \,\bigg| -\dfrac{2m+2}{y^{1+\alpha}(k\log x+\log y)^{2m+3}}-\dfrac{\alpha}{y^{1+\alpha}(k\log x+\log y)^{2m+2}} \nonumber  \\
	& \ \ \ -\dfrac{1}{x^{2\alpha-1}}
	\bigg(\dfrac{2m+2}{y^{2-\alpha}((k+2)\log x-\log y)^{2m+3}}-\dfrac{1-\alpha}{y^{2-\alpha}((k+2)\log x-\log y)^{2m+2}}\bigg)\bigg|\,\dy  \nonumber \\
	& \leq  \displaystyle\sum_{k=1}^{\infty}\dfrac{k+1}{\big(x^{\alpha-\frac{1}{2}}\big)^k}\int_{2}^{x}y^{\frac{1}{2}}(\log y)^2\bigg(\dfrac{2m+2}{y^{1+\alpha}(k\log x+\log y)^{2m+3}}+\dfrac{2m+2}{x^{2\alpha-1}\,y^{2-\alpha}((k+2)\log x-\log y)^{2m+3}}\bigg)\d y \nonumber \\
	& \ \ \ + \displaystyle\sum_{k=1}^{\infty}\dfrac{k+1}{\big(x^{\alpha-\frac{1}{2}}\big)^k}\int_{2}^{x}y^{\frac{1}{2}}(\log y)^2\bigg(\dfrac{\alpha}{y^{1+\alpha}(k\log x+\log y)^{2m+2}}-\dfrac{1-\alpha}{x^{2\alpha-1}\,y^{2-\alpha}((k+2)\log x-\log y)^{2m+2}}\bigg)\d y \nonumber \\
	& \leq 
	\displaystyle\sum_{k=1}^{\infty}\dfrac{k+1}{\big(x^{\alpha-\frac{1}{2}}\big)^k}\int_{2}^{x}y^{\frac{1}{2}}(\log y)^2\bigg(\dfrac{4m+4}{y^{1+\alpha}(k\log x+\log y)^{2m+3}}\bigg)\d y  \nonumber \\
    & \ \ \ + \displaystyle\sum_{k=1}^{\infty}\dfrac{k+1}{\big(x^{\alpha-\frac{1}{2}}\big)^k}\int_{2}^{x}y^{\frac{1}{2}}(\log y)^2\bigg(\dfrac{2\alpha-1}{y^{1+\alpha}(k\log x+\log y)^{2m+2}}\bigg)\d y \\
	& \ \ \ + \displaystyle\sum_{k=1}^{\infty}\dfrac{k+1}{\big(x^{\alpha-\frac{1}{2}}\big)^k}\int_{2}^{x}y^{\frac{1}{2}}(\log y)^2\bigg(\dfrac{1-\alpha}{y^{1+\alpha}(k\log x+\log y)^{2m+2}}-\dfrac{1-\alpha}{x^{2\alpha-1}\,y^{2-\alpha}((k+2)\log x-\log y)^{2m+2}}\bigg)\d y  \nonumber \\
	& \leq    \displaystyle\sum_{k=1}^{\infty}\dfrac{k+1}{\big(x^{\alpha-\frac{1}{2}}\big)^k}\int_{2}^{x}y^{\frac{1}{2}}(\log y)^2\bigg(\dfrac{4m+4}{y^{1+\alpha}((k+1)\log y)^{2m+3}}\bigg)\d y \nonumber \\
	& \ \ \ + \displaystyle\sum_{k=1}^{\infty}\dfrac{k+1}{\big(x^{\alpha-\frac{1}{2}}\big)^k}\int_{2}^{x}y^{\frac{1}{2}}(\log y)^2\bigg(\dfrac{2\alpha-1}{y^{1+\alpha}((k+1)\log y)^{2m+2}}\bigg)\d y \nonumber \\
	& \ \ \ + (1-\alpha)\displaystyle\sum_{k=1}^{\infty}\dfrac{k+1}{\big(x^{\alpha-\frac{1}{2}}\big)^k}\int_{2}^{x}\dfrac{(\log y)^2}{y^{\frac{1}{2}}}\bigg(\dfrac{1}{y^{\alpha}(k\log x+\log y)^{2m+2}}-\dfrac{1}{x^{2\alpha-1}\,y^{1-\alpha}((k+2)\log x-\log y)^{2m+2}}\bigg)\d y  \nonumber \\
	& \ll_m \int_{2}^{x}\dfrac{1}{y^{\alpha+\frac12}}\,\d y + (2\alpha-1)\Bigg( \int_{2}^{x}\dfrac{1}{y^{\alpha+\frac{1}{2}}}\,\d y\Bigg)\displaystyle\sum_{k=1}^{\infty}\dfrac{1}{\big(x^{\alpha-\frac{1}{2}}\big)^k} \nonumber \\
	& \ \ \ + \displaystyle\sum_{k=1}^{\infty}\dfrac{k+1}{\big(x^{\alpha-\frac{1}{2}}\big)^k}\int_{2}^{x}\bigg(\dfrac{1}{y^{\alpha}(k\log x+\log y)^{2m+2}}-\dfrac{1}{x^{2\alpha-1}\,y^{1-\alpha}((k+2)\log x-\log y)^{2m+2}}\bigg)\d y \nonumber \\	
	& \ll \int_{2}^{x}\dfrac{1}{y^{\alpha+\frac12}}\d y +  \displaystyle\sum_{k=1}^{\infty}\dfrac{k+1}{\big(x^{\alpha-\frac{1}{2}}\big)^k}\int_{2}^{x}\bigg(\dfrac{1}{y^{\alpha}(k\log x+\log y)^{2m+2}}-\dfrac{1}{x^{2\alpha-1}\,y^{1-\alpha}((k+2)\log x-\log y)^{2m+2}}\bigg)\d y\nonumber 
	\end{align}
	We can see that the last sum already appeared in our analysis,  in the first, second and fourth terms treated above. Therefore, an application of \eqref{Feb02_5:27pm} in \eqref{troy} concludes the proof. 
\end{proof}
   
\subsection*{B.4} {\it For $0 \leq \beta < \hh$ and $x \geq 3$, we have }
	\begin{equation*}
	\displaystyle\sum_{n\leq x}\dfrac{\Lambda(n)}{n^{1/2}}\left(\dfrac{x^{\beta}}{n^{\beta}}-\dfrac{n^\beta}{x^{\beta}}\right)= \dfrac{2\beta x^{1/2} - 2^{\frac{1}{2}-\beta}x^\beta \big(\frac12 + \beta\big)^2 + 2^{\frac{1}{2}+\beta}x^{-\beta} \big(\frac12 - \beta\big)^2}{\frac{1}{4}-\beta^2}+ O\left(\beta\, x^{\beta}\,(\log x)^4\right)
	\end{equation*}	
\begin{proof}Using \eqref{newpnt} we have that 
	\begin{align} \label{enfermo}
	\displaystyle\sum_{n\leq x}\dfrac{\Lambda(n)}{n^{1/2}}\bigg(\dfrac{x^{\beta}}{n^{\beta}}-\dfrac{n^\beta}{x^{\beta}}\bigg)& = \int_{2}^{x}\left(\dfrac{x^\beta}{y^{\beta+\frac{1}{2}}}-\dfrac{x^{-\beta}}{y^{\frac{1}{2}-\beta}}\right)\,\dy +2^{\frac{1}{2}-\beta}x^\beta - 2^{\frac{1}{2}+\beta}x^{-\beta} \nonumber \\
	& \ \ \ \ \ \ \ \ + O\left(\int_{2}^{x}\left|\dfrac{-(\frac{1}{2}+\beta)x^\beta}{y^{\beta+\frac{3}{2}}}-\dfrac{(\beta-\frac{1}{2})x^{-\beta}}{y^{\frac{3}{2}-\beta}}\right|y^{1/2\,}(\log y)^2\,\d y\right) \\
	& = \dfrac{x^{\frac{1}{2}}}{\frac{1}{2}-\beta}-\dfrac{2^{\frac{1}{2}-\beta}x^{\beta}}{\frac{1}{2}-\beta}-\dfrac{x^{\frac{1}{2}}}{\frac{1}{2}+\beta}+\dfrac{2^{\beta+\frac{1}{2}}x^{-\beta}}{\frac{1}{2}+\beta} +2^{\frac{1}{2}-\beta}x^\beta - 2^{\frac{1}{2}+\beta}x^{-\beta} \nonumber \\
	& \ \ \ \ \ \ \ \ + O\left(\int_{2}^{x}\left(\dfrac{(\frac{1}{2}+\beta)x^\beta}{y^{1+\beta}}-\dfrac{(\frac{1}{2}-\beta)x^{-\beta}}{y^{1-\beta}}\right)(\log y)^2\,\d y\right). \nonumber 
	\end{align}
Using the mean value theorem for the function $t \mapsto (\hh+t)x^t$ we find
		\begin{align}\label{Fev01_2:58pm}
		\begin{split}
		\int_{2}^{x}\left(\dfrac{(\frac{1}{2}+\beta)x^\beta}{y^{1+\beta}}-\dfrac{(\frac{1}{2}-\beta)x^{-\beta}}{y^{1-\beta}}\right)(\log y)^2\,\d y& \leq \int_{2}^{x}\left(\dfrac{(\frac{1}{2}+\beta)x^\beta}{y}-\dfrac{(\frac{1}{2}-\beta)x^{-\beta}}{y}\right)(\log y)^2\,\d y  \\
		&	\ll \Big[\big(\hh+\beta\big)x^{\beta}-\big(\hh-\beta\big)x^{-\beta}\Big](\log x)^3  \\
		&  \ll  \beta x^{\beta}(\log x)^4.
		\end{split}
		\end{align}
The desired estimate follows from \eqref{enfermo} and \eqref{Fev01_2:58pm}.
\end{proof}

\section*{Acknowledgements}
E.C. acknowledges support from CNPq-Brazil grant $305612/2014-0$, FAPERJ grant $E-26/103.010/2012$ and the Fulbright Junior Faculty Award. A.C. acknowledges support from CNPq-Brazil. M.B.M. acknowledges support from the NSA Young Investigator Grants H98230-15-1-0231 and H98230-16-1-0311. We are thankful to Vorrapan Chandee for helpful discussions related to the material of this paper. We are also thankful to the anonymous referee for the valuable comments and suggestions.

\end{document}